%
%
%
%
\documentclass[12pt]{amsart}

\usepackage{amsrefs}
\usepackage{amssymb}
\usepackage{fancyhdr}
\usepackage{xcolor}
\usepackage{bbm}
\usepackage{thmtools}
\usepackage{thm-restate}
\usepackage{hyperref}

\declaretheorem[name=Theorem,numberwithin=section]{theorem}
\newcommand{\ignore}[1]{}

\newtheorem{lemma}[theorem]{Lemma}
\newtheorem{proposition}[theorem]{Proposition}
\newtheorem*{proposition*}{Proposition}
\newtheorem{claim}[theorem]{Claim}
\newtheorem{corollary}[theorem]{Corollary}

\newtheorem{definition}[theorem]{Definition}

\newtheorem*{lemma*}{Lemma}
\usepackage{graphicx}
\usepackage{pstricks, enumerate, pst-node, pst-text, pst-plot}


\newcommand{\R}{\mathbb{R}}
\newcommand{\N}{\mathbb{N}}

\newcommand{\Z}{\mathbb{Z}}

\newcommand{\eps}{\varepsilon}
\newcommand{\union}{\bigcup}

\newcommand{\half}{{\textstyle \frac12}}

\def\inter{{\tt int}}
\def\exter{{\tt ext}}

\def\shifts{\mathfrak{S}}
\def\cT{\mathcal{T}}
\def\cS{\mathcal{S}}
\def\cF{\mathcal{F}}
\def\cR{\mathcal{R}}

\begin{document}

\title[The entropy of generic shifts]{Symbolic dynamics on amenable
  groups: the entropy of generic shifts}

\author{Joshua Frisch and Omer Tamuz}
\address{Department of Mathematics, Massachusetts Institute of
  Technology, Cambridge MA 02139, USA.}


\thanks{J.\ Frisch was supported by MIT's Undergraduate Research
  Opportunities Program. This research was partially conducted at
  Microsoft Research, New England.}

\date{\today}

\begin{abstract}
  Let $G$ be a finitely generated amenable group. We study the space
  of shifts on $G$ over a given finite alphabet $A$. We show that the
  zero entropy shifts are generic in this space, and that more
  generally the shifts of entropy $c$ are generic in the space of
  shifts with entropy at least $c$. The same is shown to hold for the
  space of transitive shifts and for the space of weakly mixing
  shifts.

  As applications of this result, we show that for every entropy value
  $c \in [0,\log |A|]$ there is a weakly mixing subshift of $A^G$ with
  entropy $c$. We also show that the set of strongly irreducible
  shifts does not form a $G_\delta$ in the space of shifts, and that
  all non-trivial, strongly irreducible shifts are non-isolated points
  in this space.
\end{abstract}

\maketitle
\tableofcontents

\section{Introduction}

Let $G$ be a countable, finitely generated amenable group, and let $A$
be a finite set of symbols. $G$ acts by shifts on $A^G$, which is
endowed with the product topology. A {\em symbolic dynamical system}
or a {\em shift} is a closed, shift-invariant subset of $A^G$. The
space of all shifts $\shifts = \shifts(G,A)$ admits a natural topology
induced from the Hausdorff topology; in this topology two shifts are
close if they coincide on large finite subsets of $G$.

An important dynamical property of a shift is its {\em entropy}, which
roughly measures the exponential growth rate of its projections on
finite sets.  We show that for every $c,\eps \geq 0$ the set of shifts
with entropy between $c$ and $c+\eps$ is dense in $\shifts_{\geq c}$,
the set of shifts with entropy at least $c$. We furthermore show that
this still holds for some interesting subsets of the space of shifts,
such as the weakly mixing shifts and the transitive shifts.

This result is novel even for the case of $G=\Z$ and $c=0$, although
in this case it admits a simpler proof. The case of $G=\Z^2$ already
seems to require all of our machinery.

We show that entropy is upper semi-continuous, from which it follows
that the shifts of entropy $c$ are dense in $\shifts_{\geq c}$, and in fact
are {\em generic}. The study of genericity in dynamical systems has a
long and fruitful history. For example,
Halmos~\cite{halmos1944general} showed that a generic measure
preserving transformation is weakly mixing, and
Rohlin~\cite{rohlin1948general} showed that it is not strongly mixing;
this constituted the first proof that there exist weakly mixing
transformations that are not strongly mixing.

An immediate consequence of the fact that the shifts of entropy $c$
are generic in $\shifts_{\geq c}$ is that they {\em exist}. Hence for
every $c \in [0,\log |A|]$ there exists a shift in $A^G$ with entropy
$c$ (and in fact, this can be achieved using weakly mixing shifts).
These results use the Baire Category Theorem and thus are not
explicitly constructive; for $G=\Z$, an explicit construction of
shifts of every entropy is given by
Weiss~\cite{weiss1970intrinsically} (and see also the forthcoming book
by Coornaert~\cite{coornaert2015topological}).

Hochman~\cite{hochman-generic} studies the space of transitive shifts
over $\Z$, but where the symbols are in the Hilbert cube. Among many
results, he shows that the zero entropy shifts are generic. Note that
the space of shifts over the Hilbert cube is different than the one we
study: for example, it is connected, while ours is zero dimensional
and in fact has isolated points. These differences induce different
generic properties of the two spaces. For example, Hochman shows that
the weakly mixing shifts are dense in the space of transitive shifts;
this is not true over finite alphabets, since there are isolated
transitive shifts that are not weakly mixing. Accordingly, our
techniques are different than Hochman's, and specifically are more
combinatorial in nature.

Our main tool are {\em quasi-tiling shifts}.  A {\em tiling} of a group
is a decomposition of its elements into disjoint finite sets, where
each set is a translate of some finite number of ``tiles''. In a {\em
  quasi-tiling} there are still only a finite number of tiles, but not
all the group elements need be covered, and some may be covered by
more than one tile. A ``good'' quasi-tiling will have few such
``errors''. Another desirable property of a quasi-tiling is that the
tiles have small boundaries.  Ornstein and
Weiss~\cite{ornstein1987entropy} show that finite subsets of amenable
groups can be arbitrarily well quasi-tiled, using tiles that have
arbitrarily small boundaries. 

Quasi-tiling shifts are closed, shift-invariant sets of quasi-tilings
of the entire group. We show that ``good'' ones exist, in the sense
that they have few errors, have tiles with small boundaries, and have
disjoint tiles. Furthermore, they are strongly irreducible and have
low entropy.

In a very recent paper by Downarowicz, Huczek and
Zhang~\cite{downarowicz2015tilings}, it is shown that amenable groups
can in fact be {\em tiled} by tiles with small boundaries, and that
there exist tiling shifts with low entropy. We use some of their
intermediate results in this paper, but not the actual tiling shifts,
since these are not guaranteed to be strongly irreducible.

\subsection{Definitions and results}
\subsubsection{The space of shifts}
Let $G$ be a countable, finitely generated amenable group. We fix
$d(\cdot,\cdot)$, a left-invariant word length metric on $G$. Let $A$
be a finite set of symbols. $A^G$ is endowed with the product topology
and with the $G$-action by shifts, given by $[g x](h) = x(g^{-1}h)$.

A closed, $G$-invariant subset $X$ of $A^G$ is called a {\em
  shift}. The space of all shifts is denoted by $\shifts =
\shifts(G,A)$ and is equipped with the Hausdorff topology, or, more
precisely, with its restriction to the shifts. As such it is a compact
Polish space.

This topology admits a simple geometrical definition.  For a finite
set $K \subset G$, a shift $X \in \shifts$, and an $x \in X$, let $x_K
\colon K \to A$ be the restriction of $x$ to $K$, and let $X_K$ be the
projection of $X$ on $K$:
\begin{align*}
  X_K = \{x_K\,:\,x \in X\}.
\end{align*}
Then a sequence of shifts $\{X^n\}_{n=1}^\infty$ converges to $X$ if
and only if
for all finite $K \subset G$ it holds that $\lim_n X^n_K = X_K$, or
equivalently that $X^n_K=X_K$ for $n$ large enough.

A subset of $\shifts$ is a $G_\delta$ if it is a countable
intersection of open sets. A set that contains a dense $G_\delta$
subset is called generic (equivalently: residual or comeagre) and its
complement is called meager. By the Baire Category Theorem, a
countable intersection of dense open sets is comeagre.

\subsubsection{F{\o}lner sets and entropy}
Let $B_r \subset G$ be the ball of radius $r$ around the origin in
$G$:
\begin{align*}
  B_r = \{g \in G\,:\,d(e,g) \leq r\}.
\end{align*}
\begin{definition}
  Let $F \subset G$ be finite.  For $r \in \N$, the $r$-boundary of
  $F$ is
  \begin{align*}
    \partial_r F = \{g \in G\,:\, g B_r \cap F \neq \emptyset \mbox{
      and } g B_r \cap (G \setminus F) \neq \emptyset\},
  \end{align*}
  and the $r$-boundary ratio of $F$ is
  \begin{align*}
    \rho_r(F) = \frac{|\partial_r F|}{|F|},
  \end{align*}
  where $|\cdot|$ is the counting measure on $G$. We say that $F$ is
  {\em $(r,\eps)$-invariant} if $\rho_r(F) \leq \eps$.
\end{definition}

A characterization of amenable groups is the existence of {\em
  F{\o}lner sequences}: a sequence $\{F_n\}$ of finite subset of $G$
is F{\o}lner if $\lim_n \rho_r(F_n) = 0$.

\begin{definition}
  The entropy of $X \in \shifts$ is given by
  \begin{align*}
    h(X) = \lim_{n \to \infty}\frac{1}{|F_n|}\cdot \log|X_{F_n}|.
  \end{align*}  
\end{definition}
That this limit exists and is independent of the choice of F{\o}lner
sequence was shown by Ornstein and Weiss~\cite{ornstein1987entropy}.

\begin{restatable}{proposition}{semicont}
  \label{prop:semi-cont-ent}
  The entropy map $h \colon \shifts \to \R^+$ is upper semi-continuous.
\end{restatable}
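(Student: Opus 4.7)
The plan is to leverage the fact that if $X^n \to X$ in $\shifts$ then for every finite $F \subset G$ and all sufficiently large $n$, $X^n_F = X_F$. Consequently the ``finite-window entropy'' $h_F(Y) := \tfrac{1}{|F|}\log|Y_F|$ is locally constant, hence continuous, on $\shifts$. If $h$ can be written as an infimum $h(Y) = \inf_F h_F(Y)$ over some family of finite sets, then upper semicontinuity follows at once, since the infimum of a family of continuous functions is upper semicontinuous.

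The task therefore reduces to exhibiting $h$ as such an infimum, and I would obtain this from the Ornstein--Weiss subadditive convergence theorem applied to $F \mapsto \log|Y_F|$, which is left-translation-invariant and subadditive (since $|Y_{F \cup F'}| \leq |Y_F|\cdot|Y_{F'}|$). Ornstein--Weiss shows not only that $h(Y) = \lim_n h_{F_n}(Y)$ for any F{\o}lner sequence, but that for every $\eps > 0$ there exist $r, \delta > 0$ (depending only on $\eps$ and $G$) such that any $(r, \delta)$-invariant finite $F$ satisfies $h(Y) \leq h_F(Y) + \eps$, uniformly over all shifts $Y$.

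Granting this uniform bound, the remainder is short. Fix $\eps > 0$, let $r, \delta$ be as above, and choose an $(r, \delta)$-invariant $F$ with $h_F(X) \leq h(X) + \eps$ (such $F$ exists inside any F{\o}lner sequence, by definition of $h(X)$ as a limit). For $n$ sufficiently large, $X^n_F = X_F$, so $h_F(X^n) = h_F(X)$. The uniform OW bound applied to $X^n$ then yields
\begin{align*}
h(X^n) \leq h_F(X^n) + \eps = h_F(X) + \eps \leq h(X) + 2\eps.
\end{align*}
Taking $\limsup_n$ and letting $\eps \to 0$ gives $\limsup_n h(X^n) \leq h(X)$, which is precisely upper semicontinuity.

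The main point to verify carefully is the uniformity of the Ornstein--Weiss bound across shifts. This is implicit in the quasi-tiling construction: the quasi-tiles decomposing an invariant $F$ are determined by $G$ and $\eps$ alone, and the subadditive function $\log|Y_\cdot|$ enters only through its $|T|\log|A|$-bounded values on those tiles, giving a bound on $h_F(Y)$ from below in terms of $h(Y)$ that depends on $(r,\delta)$ but not on $Y$. An alternative route, sidestepping the need to state the uniform bound, is to realize $h(Y)$ directly as $\inf_k h_{T_k}(Y)$ over a fixed countable family $\{T_k\}$ of quasi-tiling sets depending only on $G$ --- for instance using the tilings of Downarowicz--Huczek--Zhang cited in the introduction.
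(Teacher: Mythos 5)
Your high-level strategy is the right one, and it is the same one that drives the paper's proof: realize $h$ as (approximately) an infimum of finite-window entropies $h_F(Y) = \tfrac{1}{|F|}\log|Y_F|$, observe that each $h_F$ is locally constant on $\shifts$, and conclude upper semicontinuity because an infimum of continuous functions is upper semicontinuous. The key assertion you need --- that $h(Y) \leq h_F(Y) + \eps$ with a modulus depending only on $G$ and $\eps$, not on $Y$ --- is in fact true, and even in the stronger form $h(Y) \leq h_F(Y)$ for \emph{every} finite $F \ni e$; this follows from Shearer's inequality applied to the cover of a large F{\o}lner set $F_n$ by all translates $gF \cap F_n$, $g \in F_n$, which covers the deep interior of $F_n$ exactly $|F|$ times.

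However, the justification you offer is the wrong way around, and as written it does not close the gap. You propose to quasi-tile the invariant set $F$ itself by smaller OW-tiles and argue that the subadditive function enters only through its values on those tiles. But quasi-tiling $F$ by tile-translates and invoking subadditivity produces an \emph{upper} bound on $h_F(Y)$ in terms of the entropies on the tiles, not the needed \emph{lower} bound on $h_F(Y)$ in terms of $h(Y)$. What you actually need is to tile the \emph{larger} F{\o}lner sets $F_n$ by translates of $F$; but a single $F$ is not in general a quasi-tile of the group, which is precisely why Ornstein--Weiss and the present paper work with a finite \emph{tile set} $\cT = (T_1,\dots,T_m)$. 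The paper sidesteps the single-$F$ issue entirely: Proposition~\ref{prop:tiling-entropy-bound} gives the uniform bound $h(Y) \leq \max_i h_{T_i}(Y) + \eps\log|A|$ for any shift $Y$ and any $\half\eps$-good tile set $\cT$, proved by elementary subadditivity over disjoint tile-translates; one then chooses $\cT$ so that $h_{T_i}(X) \leq h(X) + \eps$ and uses $X^n_{T_i} = X_{T_i}$ for large $n$. Your proof would be correct if you either adopt this tile-set formulation (your ``alternative route'' gestures at it but does not spell out that $h_{T_k}(Y)$ must mean $\max_i h_{T_i}(Y)$ and that the hypothesis $X^n_{T_i}=X_{T_i}$ replaces $X^n_F=X_F$), or replace the OW citation by Shearer's inequality to justify $h(Y) \leq h_F(Y)$ directly.
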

The (relatively straightforward) proof of this proposition appears in
Appendix~\ref{sec:semi-cont-ent}; see Lindenstrauss and
Weiss~\cite{lindenstrauss2000mean}*{Appendix 6} for a related proof
that uses similar ideas. It follows that the set of shifts with
entropy at least $c$ is a closed subset of $\shifts$, for any $c \geq
0$. We denote it by $\shifts_{\geq c}$, and denote by $\shifts_c$ the
set of shifts with entropy exactly $c$.

\subsubsection{Main results}
Our main result is the following.
\begin{theorem}
  \label{thm:main0}
  For every $c \geq 0$, $\shifts_c$ is comeagre in $\shifts_{\geq c}$.
\end{theorem}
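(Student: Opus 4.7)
The first step is the easy observation that by upper semi-continuity (Proposition~\ref{prop:semi-cont-ent}), for each $n \in \N$ the set $\{X \in \shifts : h(X) < c + 1/n\}$ is open in $\shifts$, so
\[
\shifts_c = \shifts_{\geq c} \cap \bigcap_{n=1}^{\infty}\{X : h(X) < c + 1/n\}
\]
is a $G_\delta$ in $\shifts_{\geq c}$. Since $\shifts_{\geq c}$ is closed in the compact Polish space $\shifts$, it is itself Polish and hence Baire, and so to prove the theorem it suffices to show each open set $\shifts_{\geq c} \cap \{h < c + 1/n\}$ is dense in $\shifts_{\geq c}$. Unpacking the definition of the Hausdorff topology, this reduces to the following concrete claim: for every $X \in \shifts_{\geq c}$, every finite $K \subset G$, and every $\eps > 0$, there exists a shift $Y$ with $Y_K = X_K$ and $c \leq h(Y) < c + \eps$.

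My plan is to build $Y$ from the ``good'' quasi-tiling shifts described in the introduction. Fix a F{\o}lner set $F \supset K$ that is $(r,\delta)$-invariant, with $r$ large relative to the diameter of $K$ and $\delta$ arbitrarily small, and fix a good quasi-tiling shift $Q$ whose tiles are translates of $F$, with entropy $h(Q) < \eps/4$, pairwise disjoint tiles, error density tending to zero, and strong irreducibility. Because $h(X) \geq c$, for $|F|$ large enough $|X_F| \geq e^{c|F|(1 - o(1))}$; since $|X_K| \leq |A|^{|K|}$ is a fixed constant while $|F|$ is large, I select a subset $P \subset X_F$ with $P|_K = X_K$ and $\log|P|/|F| \in [c, c + \eps/4]$. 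I then define $Y$ as the set of all $y \in A^G$ arising as follows: pick a quasi-tiling $T \in Q$; for each tile $gF \in T$ pick some $p_g \in P$ and set $y|_{gF}$ to be the $g$-shift of $p_g$; and set $y$ equal to a fixed default symbol of $A$ on $G \setminus \bigcup_{gF \in T} gF$. A standard argument shows $Y$ is closed and $G$-invariant.

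The entropy bound follows from counting $Y$-patterns on a large F{\o}lner set $F'$: such a pattern is determined, up to negligible boundary contributions, by the $Q$-tiling of $F'$ and a $P$-choice per tile, yielding $h(Y) \leq h(Q) + \log|P|/|F| + o(1) < c + \eps$. Exhibiting at least $|P|^{|F'|/|F|(1 - o(1))}$ patterns by varying the $P$-choices gives the matching lower bound $h(Y) \geq c$. The inclusion $Y_K \supseteq X_K$ is straightforward: given $q \in X_K$, lift to some $p \in P$ with $p|_K = q$, use the shift-invariance of $Q$ to produce a tiling whose tile of shape $F$ contains the identity, and place $p$ there.

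The main obstacle is the reverse inclusion $Y_K \subseteq X_K$: every $K$-window $y|_{gK}$ of every $y \in Y$ must be a valid $X$-pattern. When $gK$ is contained in a single tile $hF$, this is immediate since $y|_{hF}$ corresponds to a pattern from $P \subset X_F$, which extends to a configuration in $X$, so the restriction to $gK$ lies in $X_{gK}$. The delicate case is when $gK$ straddles two tiles or meets the error set: a priori, $y|_{gK}$ is then a ``junction'' pattern with no reason to lie in $X_K$. Handling this is the heart of the proof. The strategy is to exploit that both the $r$-boundary of $F$ and the error density of $Q$ can be made arbitrarily small (so the set of ``bad'' positions $g$ is thin and sparse), and to use the strong irreducibility of the quasi-tiling shift together with a careful choice of the default fill and of the adjacency patterns between neighboring tiles, so that every $K$-window, bad or otherwise, is forced to be globally consistent with a configuration of $X$.
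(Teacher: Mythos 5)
Your opening reduction is correct and essentially identical to the paper's: by upper semi-continuity of $h$ (Proposition~\ref{prop:semi-cont-ent}), $\shifts_c$ is a $G_\delta$ in $\shifts_{\geq c}$, and it suffices to show the sets $\{X \in \shifts_{\geq c} : h(X) < c+1/n\}$ are dense. The construction you propose, however, has a genuine gap exactly where you flag it: the inclusion $Y_K \subseteq X_K$. Your $Y$ is assembled by dropping arbitrary patterns from $P \subset X_F$ into tile-translates of a quasi-tiling and filling the complement with a default symbol. A window $gK$ that straddles a tile boundary, or lands in the error set, then reads a concatenation of a fragment of one $P$-pattern, some default fill, and a fragment of another; there is no reason this is a valid $X$-pattern, and the properties you invoke do not rescue it. Sparsity of the bad positions is irrelevant: the Hausdorff topology requires $Y_K = X_K$ exactly, so a single bad window on a single configuration already fails. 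Strong irreducibility of $Q$ controls how quasi-tilings can be glued, not how $X$-patterns can be glued. And constraining ``adjacency patterns'' between neighboring tiles would break your entropy lower bound, which relies on choosing the $P$-pattern in each tile freely.

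The paper avoids the junction problem by a different construction. Rather than default fill, the configuration on the $r$-exterior of the quasi-tiling is kept equal to the original $x \in X$, so it is globally $X$-consistent by fiat; the interiors of tiles are then filled by \emph{local completion maps} $m_i \colon A^{T_i} \times \{\inter,\exter\}^{T_i} \to A^{T_i}$ that always output a pattern in $X_{T_i}$ and depend only on the exterior values (Section~\ref{sec:low-ent}). The geometric point you need and do not have is Proposition~\ref{prop:Y-good-approx}: a ball $B_r$ can intersect the $r$-interior of at most one tile-translate, and if it does, it is contained in that tile-translate. So every $B_r$-window either lies in the exterior (reads $x$) or lies in a single tile (reads an $X$-consistent pattern); there is no junction case. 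This is also why the paper cannot control the entropy with a single construction and a cleverly sized $P$ as you try to: because the completion maps are tied to $X$, the approximating shift a priori only has \emph{low} entropy (Proposition~\ref{prop:low-entropy-approx}). To hit $[c,c+\eps)$, Section~\ref{sec:fixed-ent} builds a finite chain $X^0 \subseteq \cdots \subseteq X^\ell$ by applying different completion maps on different subsets of tile-translates (controlled by $Q_p$), so that $h(X^0) < \eps$, $h(X^j) - h(X^{j-1}) \leq h(Q)+h(Q_p) < \eps$, $X \subseteq X^\ell$, and each $X^j$ agrees with $X$ on $B_r$; the desired $Y$ is the first $X^j$ with entropy $\geq c$. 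Your single-shot ``choose $|P|$ so $\log|P|/|F| \approx c$'' idea is attractive but would need a mechanism guaranteeing global $X$-consistency of junctions that the paper's machinery provides and your sketch does not.
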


We in fact prove a more general statement. Strongly irreducible shifts
(see definition below) have good mixing properties, and in particular
are strongly mixing.  We call a class $\cF$ of shifts {\em
  strongly-stable} if it is closed under the operations of taking
factors and of taking products with strongly irreducible shifts. For
example, the transitive shifts, the recurrent shifts, the weakly
mixing shifts, the strongly mixing shifts and the strongly irreducible
shifts are all strongly-stable classes. The first three are also
$G_\delta$ subsets of $\shifts$~\cite{hochman-generic}. Let
$\shifts^{\cF}$ be the subset of $\shifts$ that is in $\cF$, and
define $\shifts_c^{\cF}$ and $\shifts_{\geq c}^{\cF}$ similarly.
\begin{theorem}
  \label{thm:main}
  Let $\cF$ be a strongly-stable class such that $\shifts^{\cF}$ is a
  $G_\delta$ subset of $\shifts$. Then for every $c \geq 0$,
  $\shifts_c^\cF$ is comeagre in $\shifts_{\geq c}^\cF$.

  More generally, when $\shifts^{\cF}$ is not necessarily a
  $G_\delta$, for every $c\geq 0$ and $\eps >0$, the subset of shifts
  in $\shifts_{\geq c}^\cF$ with entropy in $[c,c+\eps)$ is dense in
  $\shifts_{\geq c}^\cF$.
\end{theorem}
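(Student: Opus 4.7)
The plan is to prove the density statement (the second assertion) first, from which the comeagreness claim in the $G_\delta$ case follows by a routine Baire argument. Writing
\[
  \shifts_c^\cF \;=\; \shifts_{\geq c}^\cF \,\cap\, \bigcap_{n\ge 1}\{Y : h(Y) < c+1/n\},
\]
each set $\{Y : h(Y) < c+1/n\}$ is open in $\shifts$ by Proposition~\ref{prop:semi-cont-ent}, and will be dense in $\shifts_{\geq c}^\cF$ once the density statement is proved. If $\shifts^\cF$ is $G_\delta$, then $\shifts_{\geq c}^\cF = \shifts_{\geq c}\cap\shifts^\cF$ is $G_\delta$ in $\shifts$ and hence a Polish space, so the Baire Category Theorem yields that $\shifts_c^\cF$ is a dense $G_\delta$, i.e.\ comeagre, in $\shifts_{\geq c}^\cF$.

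For the density statement, fix $X\in\shifts_{\geq c}^\cF$, a finite $K\subset G$, and $\eps>0$; the task is to build $Y\in\shifts_{\geq c}^\cF$ with $Y_K=X_K$ and $h(Y)\in[c,c+\eps)$. The tool is a strongly irreducible quasi-tiling shift $T$ with pairwise disjoint tiles $S_1,\dots,S_k$ that are $(r,\delta)$-invariant, for $r>\mathrm{diam}(K)$ and $\delta\log|A|<\eps/8$, with $|S_j|$ very large, uncovered fraction at most $\delta$, and $h(T)<\eps/4$. For each $j$, select $W_j\subseteq X_{S_j}$ of cardinality between $e^{c|S_j|}$ and $e^{(c+\eps/2)|S_j|}$ that is rich enough: for every pair $(\alpha,\beta)\in X_K\times X_{S_j\cap\partial_r S_j}$ that jointly extends inside $X_{S_j}$, some $w\in W_j$ satisfies $w|_K=\alpha$ and $w|_{S_j\cap\partial_r S_j}=\beta$ (feasible because there are only $|X_K|\cdot|A|^{\delta|S_j|}$ such pairs, well below $e^{c|S_j|}$ once $|S_j|$ is large enough). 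Let $\pi_j\colon X_{S_j}\to W_j$ be a \emph{balanced} selection preserving the restrictions to $K$ and to $S_j\cap\partial_r S_j$, and define the sliding-block code $\phi\colon X\times T\to A^G$ by
\[
  \phi(x,t)(h)\;=\;\begin{cases}
    \bigl(g\cdot\pi_j((g^{-1}x)|_{S_j})\bigr)(h) & \text{if } h\in g(S_j\setminus\partial_r S_j)\text{ for a tile }gS_j\text{ of }t,\\
    x(h) & \text{otherwise.}
  \end{cases}
\]
Set $Y:=\phi(X\times T)$, which is a subshift because $\phi$ is continuous and equivariant.

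Three verifications close the argument. First, $Y\in\cF$: strong irreducibility of $T$ and strong-stability of $\cF$ give $X\times T\in\cF$, and $Y$ is a factor. Second, $Y_K=X_K$: since $r>\mathrm{diam}(K)$ and $(r,\delta)$-invariance implies $B(h_1,r)\subseteq gS_j$ whenever $h_1\in g(S_j\setminus\partial_r S_j)$, the disjointness of tiles forces $K$ either to avoid every deep interior or to lie inside a single tile $gS_j$; in either case the $K$-preservation of $\pi_j$ yields $\phi(x,t)|_K=x|_K$, so $Y_K\subseteq X_K$, and placing a tile containing $K$ at the origin (arranged via strong irreducibility of $T$) realizes every $\alpha=x|_K\in X_K$ as some $\phi(x,t)|_K$. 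Third, $h(Y)\in[c,c+\eps)$: the upper bound follows from enumerating $F$-patterns of $Y$ by the tile structure ($\le|T_F|$), a $\pi_j$-image per tile ($\le\prod|W_j|$), and the $O(\delta|F|)$ uncovered and boundary $x$-values, yielding $h(Y)\leq h(T)+(c+\eps/2)+O(\delta\log|A|)<c+\eps$; the lower bound uses the balancedness of $\pi_j$ to bound fibers of $x\mapsto\phi(x,t)|_F$ by $\prod_{\text{tiles}\subseteq F}|X_{S_j}|/|W_j|$ up to sub-exponential factors, which after F{\o}lner normalization gives $h(Y)\geq h(X)-(h(X)-c)=c$.

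The main obstacle lies not in the factor-map argument itself but in guaranteeing the existence of a quasi-tiling shift $T$ with all the required properties simultaneously: strong irreducibility, disjoint $(r,\delta)$-invariant tiles of arbitrarily large size, arbitrarily small uncovered fraction, and arbitrarily small entropy. This is the substantive technical content of the earlier sections, built from the Ornstein--Weiss quasi-tiling theorem and the Downarowicz--Huczek--Zhang tiling construction. Once $T$ is in hand, the density proof reduces to a careful bookkeeping exercise balancing $r$, $\delta$, and $|S_j|$ against $|K|$, $h(X)$, and $\eps$.
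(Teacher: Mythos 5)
Your plan to derive the genericity statement from the density statement via Baire is exactly the paper's (see the proof of Theorem~\ref{thm:main}). The substance is the density statement, and here your construction is genuinely different from the paper's Proposition~\ref{prop:main-technical}. The paper proceeds in two stages: first it builds a low-entropy approximation $X^0 = M^1(X\times Q)$ of $X$ using deletion and local completion maps (Section~\ref{sec:low-ent}); then it builds a chain $X^0, X^1, \ldots, X^\ell$ by enumerating all $\ell$ global completion maps and allowing each $N^j$ to ``restore'' the original tile-pattern wherever an auxiliary low-entropy tiling $Q_p$ permits. Each step increases entropy by at most $h(Q)+h(Q_p)<\eps$, and the key point is that $X\subseteq X^\ell$ (Proposition~\ref{prop:X-ell}), so $h(X^\ell)\geq h(X)\geq c$ and some $X^j$ therefore lands in $[c,c+\eps)$. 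You instead attempt a single-step construction: for each tile $S_j$ choose a ``balanced rich selection'' $W_j\subseteq X_{S_j}$ of size roughly $e^{c|S_j|}$, replace the deep interior of each tile by a $\pi_j$-selected pattern, and argue that the entropy of the image is squeezed from both sides.

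There are two genuine gaps. First, the requirement that $\pi_j$ ``preserve the restriction to $K$'' is ill-formed: $K$ is a fixed subset of $G$, whereas a tile $gS_j$ sits at a variable translate $g$, so the set $g^{-1}K\cap S_j$ that $\pi_j$ would have to respect changes with $g$; no single $\pi_j$ can respect all of these. (The conclusion $Y_K\subseteq X_K$ is still true, but for a different reason: boundary-preservation of $\pi_j$ forces $\phi(x,t)|_{gS_j}\in X_{gS_j}$, and then $K\subseteq gS_j$ gives $\phi(x,t)|_K\in X_K$. The reverse inclusion $Y_K\supseteq X_K$ is easiest by placing $K$ in the exterior of a suitable $t$, as the paper does in Proposition~\ref{prop:Y-good-approx}, not via tile-interior richness.) Second, and more seriously, the lower bound $h(Y)\geq c$ via fiber-counting is not actually established. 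The final identity ``$h(Y)\geq h(X)-(h(X)-c)=c$'' ignores the uncovered-region and boundary errors of order $\delta\log|A|$, the gap between $\log|X_{S_j}|$ and $h(X)|S_j|$, and---crucially---assumes a ``balanced'' $\pi_j$ exists simultaneously with the boundary-preservation and richness constraints, which is a nontrivial combinatorial assertion you never verify. Making this rigorous means carrying explicit error terms and tuning $|W_j|$ strictly above $e^{c|S_j|}$ by more than the total error while still keeping the upper bound below $c+\eps$; this is precisely the delicate bookkeeping you defer. The paper's chain construction is designed to sidestep exactly this difficulty: its lower bound comes for free from the containment $X\subseteq X^\ell$, with no fiber-counting at all. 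Until the balanced-selection existence and the quantitative lower bound are pinned down, your proposal does not yet constitute a proof.
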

Since the class of all shifts is also strongly-stable,
Theorem~\ref{thm:main} implies Theorem~\ref{thm:main0}. Note again
that the weakly mixing shifts and the transitive shifts both form a
$G_\delta$~\cite{hochman-generic}.

Theorem~\ref{thm:main} admits a number of interesting immediate
corollaries.
\begin{corollary}
  For every $c \in [0,\log |A|]$ there exists a weakly mixing shift
  $X \subseteq A^G$ with $h(X) =c$.
\end{corollary}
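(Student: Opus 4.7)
The plan is to deduce this corollary directly from Theorem~\ref{thm:main} applied to the class $\cF = \mathrm{WM}$ of weakly mixing shifts. Two hypotheses must be checked for Theorem~\ref{thm:main} to apply: that $\mathrm{WM}$ is strongly-stable, and that $\shifts^{\mathrm{WM}}$ is a $G_\delta$ subset of $\shifts$. Both are already noted in the text (the $G_\delta$ property is attributed to Hochman, and strong-stability under factors and products with strongly irreducible shifts is immediate from standard mixing considerations).

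Next, I would observe that $\shifts_{\geq c}^{\mathrm{WM}}$ is non-empty for every $c \in [0, \log |A|]$: the full shift $A^G$ is Bernoulli (hence weakly mixing) and has entropy exactly $\log|A| \geq c$, so $A^G \in \shifts_{\geq c}^{\mathrm{WM}}$. I would also note that $\shifts_{\geq c}^{\mathrm{WM}}$ is a Polish space in its own right: $\shifts_{\geq c}$ is closed in the compact Polish space $\shifts$ by Proposition~\ref{prop:semi-cont-ent}, and intersecting with the $G_\delta$ set $\shifts^{\mathrm{WM}}$ yields a $G_\delta$ subset of a compact Polish space, which is Polish.

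Now Theorem~\ref{thm:main} gives that $\shifts_c^{\mathrm{WM}}$ is comeagre in $\shifts_{\geq c}^{\mathrm{WM}}$. By the Baire Category Theorem applied to the non-empty Polish space $\shifts_{\geq c}^{\mathrm{WM}}$, a comeagre subset is non-empty. Hence there exists $X \in \shifts_c^{\mathrm{WM}}$, i.e.\ a weakly mixing shift $X \subseteq A^G$ with $h(X) = c$, completing the proof.

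There is no real obstacle here: the whole content of the corollary is the existence part, which Theorem~\ref{thm:main} delivers once one supplies the single ``seed'' witness $A^G \in \shifts_{\geq c}^{\mathrm{WM}}$ to ensure the ambient Polish space is non-empty.
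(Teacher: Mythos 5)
Your proposal is correct and takes essentially the same route as the paper: the corollary is stated there as an immediate consequence of Theorem~\ref{thm:main} applied to the weakly mixing class, the point being that a comeagre subset of the non-empty Polish space $\shifts_{\geq c}^{\mathrm{WM}}$ is non-empty. The only thing you add is making explicit the witness $A^G$ and the Baire-category bookkeeping, which the paper leaves implicit.
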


It is not known whether the set of strongly mixing shifts is a
$G_\delta$ (see Hochman~\cite{hochman-generic}). However, the next
result follows directly from Theorem~\ref{thm:main}.
\begin{corollary}
  The set of strongly irreducible shifts is not a $G_\delta$.
\end{corollary}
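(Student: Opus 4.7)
My plan is to argue by contradiction. Let $\cF$ denote the class of strongly irreducible shifts, and suppose that $\shifts^{\cF}$ is a $G_\delta$. Since $\cF$ is strongly-stable, Theorem~\ref{thm:main} applied with $c = 0$ makes $\shifts^{\cF}_0$ comeagre---and hence dense---in $\shifts^{\cF}_{\geq 0} = \shifts^{\cF}$. The contradiction will come from showing that $\shifts^{\cF}_0$ is actually far too small to be dense.

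The key lemma is: \emph{every strongly irreducible shift of entropy zero is a single-point shift $\{a^G\}$ for some $a \in A$}. To prove the contrapositive, suppose $X \in \shifts^{\cF}$ contains two distinct configurations. By shift-invariance we may assume these configurations differ at the identity, so $X_{\{e\}}$ contains two symbols $a \neq b$. Fix a strong-irreducibility constant $K$, which we take symmetric and containing $e$. Inside any finite $F \subset G$ a greedy construction produces pairwise $K$-separated points $g_1,\dots,g_n \in F$ with $n \geq |F|/|K|$. Applying strong irreducibility iteratively---one $g_i$ at a time---every assignment in $\{a,b\}^n$ to the $g_i$'s is realised by some element of $X$, yielding $|X_F| \geq 2^{|F|/|K|}$ and hence $h(X) \geq (\log 2)/|K| > 0$. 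So zero entropy forces $|X| = 1$, and shift-invariance then forces the unique point to be a constant configuration.

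This lemma places $\shifts^{\cF}_0$ inside the finite, hence closed, set $\{\{a^G\} : a \in A\}$. Assuming $|A| \geq 2$, the full shift $A^G$ belongs to $\shifts^{\cF}$ yet sits outside this set: none of the $\{a^G\}$ lie in the basic neighborhood $\{Y : Y_{\{e\}} = A\}$ of $A^G$. Hence $A^G$ is not in the closure of $\shifts^{\cF}_0$, contradicting density, and we conclude that $\shifts^{\cF}$ is not a $G_\delta$.

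The main obstacle is the combinatorial lemma. The greedy packing itself is routine---each chosen point excludes at most $|K|$ others, so the process must continue until at least $|F|/|K|$ points have been selected. The more delicate step is checking that iterated strong irreducibility really does let us prescribe symbols independently at $n$ separated positions; this reduces, by induction on $k$, to the two-set form of strong irreducibility applied to $\{g_1,\dots,g_{k-1}\}$ and $\{g_k\}$, which is legitimate precisely because $g_k$ is $K$-separated from each previously chosen $g_i$.
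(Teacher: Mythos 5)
Your proof is correct and follows essentially the same approach as the paper: assuming $\shifts^{\cF}$ were a $G_\delta$, Theorem~\ref{thm:main} with $c=0$ would make zero-entropy strongly irreducible shifts dense, but only the $|A|$ constant singleton shifts have zero entropy, and these are not dense. The one genuine addition is that you supply a proof (greedy packing of $r$-separated points plus iterated strong irreducibility) of the key fact that non-trivial strongly irreducible shifts have positive entropy, which the paper simply invokes.
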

This is a consequence of the fact that non-trivial (i.e., of
cardinality greater than one) strongly irreducible shifts have positive
entropy. A natural question is: are there uncountably many strongly
irreducible shifts? Another consequence on strongly irreducible shifts
is the following corollary, which follows immediately from the second
part of Theorem~\ref{thm:main} and the fact that non-trivial strongly
irreducible shifts have positive entropy.
\begin{corollary}
  All non-trivial strongly irreducible shifts are non-isolated points
  in $\shifts$.
\end{corollary}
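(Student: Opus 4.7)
The plan is to apply the density half of Theorem~\ref{thm:main} at an entropy level strictly below $h(X)$, so that the approximating shifts cannot equal $X$ for trivial entropy reasons. Let $X$ be a non-trivial strongly irreducible shift. By the fact cited just before the corollary, $c := h(X) > 0$, so there is room to the left of $c$ to place a reference entropy threshold. I will take the strongly-stable class in Theorem~\ref{thm:main} to be $\cF = \shifts$ itself, which is trivially closed under factors and products with strongly irreducible shifts and is trivially a $G_\delta$; no subtler choice of $\cF$ is needed, because the statement to prove concerns non-isolation inside all of $\shifts$ rather than inside some refined subclass.

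Fix any $c' \in (0, c)$ and any $\eps \in (0, c - c')$. Since $h(X) = c \geq c'$, the shift $X$ lies in $\shifts_{\geq c'}$. The second part of Theorem~\ref{thm:main} (applied with $c'$ and $\eps$ in place of $c$ and $\eps$) then guarantees that the set of shifts in $\shifts_{\geq c'}$ whose entropy falls in $[c', c' + \eps)$ is dense in $\shifts_{\geq c'}$. In particular, there exists a sequence of shifts $X_n \to X$ in $\shifts$ with $h(X_n) < c' + \eps < c$ for every $n$. Since $h(X_n) \neq h(X)$ forces $X_n \neq X$, the sequence witnesses that $X$ is not an isolated point of $\shifts$, which is what the corollary asserts.

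The main obstacle in this argument is already absorbed into Theorem~\ref{thm:main}; once that theorem is in hand the corollary is essentially immediate. The only genuine subtlety is to choose the reference entropy level $c'$ strictly below $h(X)$ rather than equal to it: approximating $X$ by shifts of entropy exactly $h(X)$ would not rule out the degenerate possibility that $X_n = X$ for all large $n$, whereas forcing $h(X_n) < h(X)$ distinguishes the approximants from $X$ using only the value of the entropy functional.
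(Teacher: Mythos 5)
Your argument is correct and matches the paper's intended proof: the paper states the corollary follows from the second part of Theorem~\ref{thm:main} (applied at a level below $h(X)$, e.g.\ $c=0$ with $\eps<h(X)$) together with the positivity of the entropy of non-trivial strongly irreducible shifts, which is exactly what you spell out. The choice of $c'$ strictly positive is harmless but unnecessary; $c'=0$ works just as well.
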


Another direct consequence is the following.
\begin{corollary}
  The isolated points in $\shifts^\cF_{\geq c}$ all have entropy $c$.
\end{corollary}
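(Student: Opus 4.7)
The plan is to derive this corollary almost immediately from the second (more general) assertion of Theorem~\ref{thm:main}, using only the definition of an isolated point together with the lower bound $h(X) \geq c$ that any member of $\shifts^\cF_{\geq c}$ must satisfy.

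First I would fix an isolated point $X \in \shifts^\cF_{\geq c}$. By definition of ``isolated'', the singleton $\{X\}$ is open in $\shifts^\cF_{\geq c}$; equivalently, there is an open neighborhood $U \subseteq \shifts$ of $X$ whose intersection with $\shifts^\cF_{\geq c}$ equals $\{X\}$. The second part of Theorem~\ref{thm:main} says that for every $\eps > 0$, the set of shifts $Y \in \shifts^\cF_{\geq c}$ with $h(Y) \in [c,c+\eps)$ is dense in $\shifts^\cF_{\geq c}$. Applying this density to the non-empty relatively open set $\{X\}$ forces $X$ itself to lie in that set, so $h(X) \in [c,c+\eps)$.

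Since $\eps > 0$ was arbitrary, this yields $h(X) \leq c$. On the other hand, $X \in \shifts^\cF_{\geq c}$ gives $h(X) \geq c$, so $h(X) = c$, as desired.

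There is no real obstacle here; the content of the corollary is entirely carried by the density statement in Theorem~\ref{thm:main}. The only point worth flagging is that one should use the second, more general part of the theorem rather than the $G_\delta$/comeagre version, since the hypothesis of the corollary does not assume that $\shifts^\cF$ is a $G_\delta$; density of shifts with entropy in $[c,c+\eps)$ is exactly what is needed, and exact entropy $c$ is recovered only after letting $\eps \to 0$.
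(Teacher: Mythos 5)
Your proof is correct and is exactly the ``direct consequence'' the paper intends: density of $\{h \in [c,c+\eps)\}$ in $\shifts^\cF_{\geq c}$ (second part of Theorem~\ref{thm:main}) applied to the relatively open singleton $\{X\}$, then $\eps \to 0$. Your remark that one must invoke the second part of the theorem---since no $G_\delta$ hypothesis on $\shifts^\cF$ is available---is an accurate and worthwhile clarification.
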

Since any isolated point in $\shifts_{\geq c}$ has to be a shift of finite
type, and since there are only countably many such shifts, it also
follows that
\begin{corollary}
  For all but a countable set of values of $c$, the space
  $\shifts_{\geq c}$ has no isolated points, and is therefore
  homeomorphic to the Cantor space.
\end{corollary}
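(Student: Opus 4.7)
The plan is to first establish that every isolated point of $\shifts_{\geq c}$ must be a shift of finite type, and then conclude using the countability of shifts of finite type together with Brouwer's characterization of the Cantor space. Combined with the preceding corollary—that any isolated point of $\shifts_{\geq c}$ has entropy exactly $c$—it will follow that the set of exceptional values of $c$ for which $\shifts_{\geq c}$ has any isolated point is contained in the (countable) set of entropies of shifts of finite type, and hence is itself countable.

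To show that an isolated point $X \in \shifts_{\geq c}$ is a shift of finite type, I would use the explicit description of the topology on $\shifts$: basic neighborhoods of $X$ have the form $U_K = \{Y \in \shifts : Y_K = X_K\}$ for finite $K \subset G$, so I may choose $K$ with $U_K \cap \shifts_{\geq c} = \{X\}$. I then consider the shift of finite type
\begin{align*}
  X' = \{y \in A^G : (gy)_K \in X_K \text{ for every } g \in G\}.
\end{align*}
By $G$-invariance of $X$ one has $X \subseteq X'$, so $h(X') \geq h(X) \geq c$ and $X' \in \shifts_{\geq c}$. Taking $g=e$ in the defining condition shows $(X')_K \subseteq X_K$, and $X \subseteq X'$ gives the reverse inclusion, so $(X')_K = X_K$ and $X' \in U_K$. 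By the isolation assumption $X' = X$, proving that $X$ is a shift of finite type.

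For the concluding homeomorphism, I would invoke that $\shifts$ is zero-dimensional, compact, and metrizable, so its closed subspace $\shifts_{\geq c}$ inherits these properties. For $c$ outside the countable exceptional set identified above, $\shifts_{\geq c}$ additionally has no isolated points; provided that it is nonempty (which holds for every $c \leq \log|A|$ since $A^G \in \shifts_{\geq c}$), Brouwer's characterization of the Cantor space then applies. I do not anticipate a significant obstacle: the only real content is the isolated-point-to-SFT reduction, which merely formalizes the observation that the SFT generated by $X_K$ is the largest shift with $K$-projection equal to $X_K$, and hence can only enlarge $X$ while remaining in $U_K$.
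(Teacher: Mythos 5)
Your proposal is correct and follows essentially the same route as the paper: the paper's argument is precisely that isolated points of $\shifts_{\geq c}$ must be shifts of finite type, that there are only countably many of these, and that combined with the preceding corollary (isolated points have entropy exactly $c$) the exceptional set of $c$-values is countable, after which Brouwer's characterization gives the Cantor space. Your write-up has the small merit of spelling out the reduction from ``isolated point'' to ``shift of finite type'' (via the ambient SFT $X'$ determined by $X_K$), which the paper asserts without proof.
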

We furthermore show that for a dense set of entropy values
$\shifts_{\geq c}$ indeed has isolated points.
\begin{restatable}{theorem}{isolated}
  \label{thm:isolated}
  There is a dense subset $C \subset [0,\log|A|]$ such that
  for each $c \in C$ there exists a strongly irreducible shift $X
  \subseteq A^G$ with $h(X) = c$, and where $X$ is an isolated point
  in $\shifts_{\geq c}$.
\end{restatable}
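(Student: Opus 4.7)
The plan is to construct, for each $c$ in a dense subset $C \subseteq [0, \log |A|]$, a strongly irreducible subshift of finite type $X \subseteq A^G$ with $h(X) = c$ that is isolated in $\shifts_{\geq c}$. The isolation will follow a single general principle. If $X$ is a subshift of finite type defined by local rules on a window $K_0$, and $X' \in \shifts$ is close enough to $X$ that $X'_K = X_K$ for some $K \supseteq K_0$, then shift-invariance forces $X'$ to obey the same local rules, so $X' \subseteq X$. If in addition $X$ is strongly irreducible, then no proper subshift of $X$ reaches entropy $h(X)$---the standard consequence of a strongly irreducible subshift admitting a unique fully supported measure of maximal entropy, equivalently of a direct counting argument showing that forbidding any cylinder strictly reduces the entropy. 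Combining, $h(X') \geq h(X)$ forces $X' = X$, giving isolation.

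As a warm-up, take $c = \log k$ for $1 \leq k \leq |A|$ an integer and let $X = [k]^G$, the full shift on a sub-alphabet. This $X$ is strongly irreducible, has entropy $\log k$, and the principle above applies immediately: any proper subshift of $[k]^G$ omits a cylinder and hence has entropy strictly less than $\log k$ by a direct counting argument. This gives isolated points for the entropies $\log 1, \log 2, \ldots, \log |A|$, which are however not dense in $[0, \log |A|]$.

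To fill in a dense set of entropies, I would decorate the strongly irreducible quasi-tiling shifts $Y$ constructed in the main part of the paper. Such a $Y$ has a finite family of tile shapes $T_1, \ldots, T_m$ of small boundary ratio, asymptotic tile densities $\beta_i$, and arbitrarily small entropy $h(Y) < \eps$. For each $i$, pick a set $P_i \subseteq A^{T_i}$ of size $n_i$, and let $X$ be the subshift of $A^G$ whose configurations consist of a quasi-tiling from $Y$ together with a choice from $P_i$ placed on each tile of shape $T_i$ (with symbols on uncovered or overlapping positions set to a fixed default). Then $X$ is a subshift of finite type, inherits strong irreducibility from $Y$ together with the local independence of the decoration step, and a standard computation gives $h(X) = h(Y) + \sum_i \beta_i \log n_i$ up to an error of order $\eps$. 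Varying the quasi-tiling $Y$ (hence the $\beta_i$) and the integers $n_i$, the resulting entropies form a dense subset $C$ of $[0, \log |A|]$.

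The main obstacle is the last step of the isolation principle---showing that no proper subshift of the decorated $X$ attains entropy $h(X)$. For $[k]^G$ this is immediate, but for the decorated quasi-tiling shift one must carry out an amenable-group version of the specification-style counting argument, carefully tracking how the quasi-tiling errors interact with the decorations. This is precisely the kind of bookkeeping the quasi-tiling framework developed in the paper is designed to handle.
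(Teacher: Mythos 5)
Your isolation principle---a strongly irreducible shift of finite type $X$ with $h(X) = c$ is isolated in $\shifts_{\geq c}$, because a close-enough shift of entropy $\geq c$ must be a subshift of $X$ and proper subshifts of strongly irreducible shifts have strictly smaller entropy---is exactly the first step of the paper's proof, and the paper also cites the proper-subshift fact as known (Ceccherini-Silberstein--Coornaert). So that part lines up well. It is the density construction where your route diverges, and where there are real gaps.

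Your decoration idea has a well-definedness problem before anything else: a decorated configuration is supposed to live in $A^G$, but a given $x \in A^G$ does not determine the quasi-tiling $T$ that produced it, so ``the subshift whose configurations consist of a quasi-tiling from $Y$ together with choices on each tile'' is really a subshift of $A^G \times \cT^G$, and you have to take the image under the projection to $A^G$. Images under factor maps need not be shifts of finite type, and the quasi-tiling shift $Q$ itself is not a shift of finite type, so the SFT property---which your isolation principle needs so that nearby shifts are forced to be subshifts---is not inherited and is in fact doubtful. Strong irreducibility of the projection is also not automatic from strong irreducibility of $Q$: you'd need a local gluing argument for the projected shift, not just for $Q$. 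Finally, the entropy formula $h(X) = h(Y) + \sum_i \beta_i \log n_i$ is not right as written (the per-site contribution of a shape-$T_i$ tile is $\log n_i / |T_i|$, and the ``tile densities'' $\beta_i$ are not single numbers attached to the shift), and because of this the density-of-entropies claim also needs work.

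The paper sidesteps all of this with a construction that never puts the tiling into the configuration. It fixes a good tile set $\cR$ with large, small-boundary tiles, and for $0 \le j \le |R_1|$ defines $X^j$ to be the set of $x \in A^G$ such that the restriction of $x$ to \emph{every} tile-translate $hR_i$ has support of size at most $j|R_i|/|R_1|$. This is a purely local constraint, so each $X^j$ is automatically an SFT, and it is visibly strongly irreducible (glue two admissible patterns by filling the gap with the distinguished symbol $0$). The quasi-tiling machinery is then used only for the entropy bookkeeping: one shows $h(X^0) \leq \eps'$, $h(X^{|R_1|}) = \log|A|$, and the key telescoping estimate $h(X^j) - h(X^{j-1}) \le \delta(\eps)$ by exhibiting $X^j$ as a subshift of a factor of $X^{j-1} \times Y$ for a low-entropy auxiliary shift $Y$ (built from $Q$). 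That yields a $\delta(\eps)$-net of realized entropies, and letting $\eps \to 0$ gives density. This telescoping also avoids the need to compute $h(X^j)$ exactly, which your decoration approach implicitly requires. If you want to salvage your route, the hard part to nail down is why the projected decorated shift is an SFT and strongly irreducible; as stated, those steps are asserted rather than proved and I don't see how to make them go through cleanly.
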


\subsubsection{Quasi-tiling shifts}
To prove Theorem~\ref{thm:main} we show the existence of good {\em
  strongly irreducible quasi-tiling shifts}. A quasi-tiling of a
countable group $G$ is a partial covering of $G$ by translates of a
finite number of ``tiles'' or finite subsets of $G$. A quasi-tiling is
``good'' if there are few overlaps and uncovered regions. See Ornstein
and Weiss~\cite{ornstein1987entropy}, and also Ceccherini-Silberstein
and Coornaert~\cite{ceccherini2010cellular}*{Section 5.6}, who use a
somewhat different formulation. Note that usually tile-translates in
quasi-tilings are allowed to overlap. We, however, will only use {\em
  disjoint} quasi-tilings, where tile-translates do not overlap. 

Strong irreducibility is a strong form of mixing; in particular
stronger than strong mixing. We now define strong irreducibility.
\begin{definition}
  A shift $X$ is {\em strongly irreducible} if there exists an $r \in
  \N$ such that, for any two finite subsets $K, H\in G$ which satisfy
  $d(k,h) > r$ for all $k \in K$ and $h \in H$, and any $x,y \in X$,
  there exists a $z \in X$ with $z_K = x_K$ and $z_H =
  y_H$.
\end{definition}

\begin{definition}
  A {\em tile set} $\cT = (T_1,\ldots,T_n)$ is a finite collection of
  finite subsets of $G$, each of which includes the identity.

  Given an $r>0$ and $\eps > 0$, a tile set $\cT$ is said to be
  $(r,\eps)$-invariant if each $T_i$ is $(r,\eps)$-invariant.

  We denote by $r(\cT) = \max\{d(e,g)\,:\, g \in T_i,i=1,\ldots,n\}$
  the radius of $\cT$. This is the radius of the smallest ball that
  contains all the tiles in $\cT$.
  
  A {\em tile-translate} of $T_i$ is a set of the form $h T_i$, for some $h
  \in G$. 
\end{definition}

We next define a $\cT$-tiling, which is a covering of $G$ by
disjoint translates of the tiles in $\cT$.
\begin{definition}
  Let $\cT$ be a tile set. $T \colon G \to \cT \cup \emptyset$ is
  called a $\cT$-{\em tiling} if $g T(g) \cap h T(h) = \emptyset$ for
  all $g \neq h \in G$, and if, for each $g\in G$, there exists a unique
  $h \in G$ with $g \in h T(h)$.
\end{definition}
Here $h$ can be thought of as the ``corner'' of the tile-translate
$h T(h)$. As mentioned above, recently Downarowicz, Huczek and
Zhang~\cite{downarowicz2015tilings} showed that all amenable groups
admit good tilings. We will not use this result.

A $\cT$-quasi-tiling is much like a tiling, except that not all of $G$
need be covered. We will still require the tile-translates to be
disjoint (thus parting from the usual terminology), and will sometimes
explicitly refer to these quasi-tilings explicitly as disjoint
quasi-tilings.
\begin{definition}
  Let $\cT$ be a tile set. $T \colon G \to \cT \cup \emptyset$ is
  called a (disjoint) $\cT$-{\em quasi-tiling} if $g T(g) \cap h T(h)
  = \emptyset$ for all $g, h \in G$.

  A $g \in G$ is said to be a $T$-{\em error} if it is not an element
  of any tile-translate $g T(g)$.
\end{definition}
In a slight abuse of notation, we denote the set of (disjoint)
$\cT$-quasi-tilings by $\cT^G$.

A ``good'' quasi-tiling $T \in \cT^G$ will have few errors. The following
definition is a useful way to quantify that.
\begin{definition}
  Let $T \in \cT^G$ be a quasi-tiling. Denote by $e(T,F)$ be the
  number of $T$-errors in $F \subset G$.  The {\em error density} of a
  quasi-tiling $T$ is said to be at most $\eps$ if there exists some
  $\delta > 0$ such that
  \begin{align*}
    \frac{e(T,F)}{|F|} \leq \eps
  \end{align*}
  whenever $F$ is $(1,\delta)$-invariant. 
\end{definition}

\subsubsection{Shifts of quasi-tilings}
Given a tile set $\cT$, a {\em quasi-tiling shift} is simply a subshift
of $\cT^G$. 

\begin{definition}
  The {\em error density} of a $\cT$-quasi-tiling shift $Q$ is said to be
  at most $\eps$ if there exists some $\delta > 0$ such that
  \begin{align*}
    \frac{e(T,F)}{|F|} \leq \eps
  \end{align*}
  for all $T \in Q$ and whenever $F$ is $(1,\delta)$-invariant.
\end{definition}

The following theorem shows that ``good'' disjoint quasi-tiling shifts
exist. Its proof is straightforward, given the results of Ornstein and
Weiss~\cite{ornstein1987entropy} and a recent paper of Downarowicz,
Huczek and Zhang~\cite{downarowicz2015tilings}; the only missing
ingredient is strong irreducibility.
\begin{theorem}[Ornstein and Weiss~\cite{ornstein1987entropy},
  Downarowicz, Huczek and Zhang~\cite{downarowicz2015tilings}]
  \label{thm:quasi-tiling-shifts-intro}
  For every $r,\eps$ there exists an $(r,\eps)$-invariant tile set
  $\cT$ and a shift $Q \subset \cT^G$ that has the following properties:
  \begin{enumerate}
  \item The error density of $Q$ is at most $\eps$.
  \item $h(Q) < \eps$.
  \item $Q$ is strongly irreducible.
  \end{enumerate}
\end{theorem}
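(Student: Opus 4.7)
The plan is to construct $Q$ by thickening a Downarowicz--Huczek--Zhang tiling shift. By their results (combined with the Ornstein--Weiss machinery), for any $\eta>0$ there exist an $(r,\eta)$-invariant tile set $\cT$ and a $\cT$-tiling shift $Q_0\subset\cT^G$ with $h(Q_0)<\eta$; this $Q_0$ satisfies (1) and (2) but generically fails (3). Choose $\eta$ to be a sufficiently small multiple of $\eps$, set $R:=r(\cT)$, and fix a small $\delta>0$. Define $Q$ to be the set of disjoint quasi-tilings $T\in\cT^G$ satisfying $e(T,F)/|F|\le\eps$ on every $(1,\delta)$-invariant $F\subset G$. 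Both conditions are closed and shift-invariant, $Q\supseteq Q_0$ is nonempty, and property (1) is immediate. Property (2) follows by bounding the number of $|F|$-patterns in $Q$ by those of $Q_0$ (entropy $<\eta$) together with a choice of an ``erase set'' among the tile-corners, giving $h(Q)\le h(Q_0)+O(\eta\log(1/\eta))<\eps$ for $\eta$ small enough and tiles chosen large enough.

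For property (3), set $r^*=4R$ and take $K,H$ with $d(K,H)>r^*$ and $T_1,T_2\in Q$. Define $K^*$ as the union of $K$ with the bodies of all $T_1$-tiles meeting $K$, and $H^*$ analogously for $T_2$; by the distance bound $K^*\cap H^*=\emptyset$, and both are contained in $N_{2R}(K\cup H)$. Fix a reference tiling $T^*\in Q_0$ and construct $T$ by taking the corner assignment of $T_1$ on $K^*$, of $T_2$ on $H^*$, and of $T^*$ on $G\setminus(K^*\cup H^*)$, erasing any $T^*$-corner whose tile body would protrude into $K^*\cup H^*$. This is a disjoint quasi-tiling, and by design agrees with $T_1$ on $K$ and $T_2$ on $H$.

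The hard part will be verifying $T\in Q$, i.e., that $e(T,F)/|F|\le\eps$ for every $(1,\delta)$-invariant $F$. The decomposition $e(T,F)=e(T_1,F\cap K^*)+e(T_2,F\cap H^*)+e_{\text{bdry}}(F)$ puts the first two summands in disjoint subsets of $F$, with the third bounded by $|\partial_R(K^*\cup H^*)\cap F|\cdot|T_{\max}|$. A naive application of the global bound can give each of the first two terms up to $\eps|F|$, inviting a doubling that breaks membership in $Q$. Resolving this is the central technical point of the argument: it requires careful parameter choice (with $\eta$ chosen as a small multiple of $\eps$ depending on the ratio $|T_{\max}|/|T_{\min}|$), heavy use of the DHZ small-tile-boundary property to force $e_{\text{bdry}}(F)/|F|\to 0$ as $\delta\to 0$, and possibly a strengthening of the definition of $Q$ to enforce error density $\le\eps$ on every $(1,\delta)$-invariant subset $F'\subseteq F$ rather than just on $F$ itself. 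These last refinements are where the assumed \emph{intermediate} results of DHZ and OW enter, providing the uniform control needed to combine the two contributions without losing a constant factor.
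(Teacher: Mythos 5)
Your high-level plan (start from DHZ, thicken to get strong irreducibility) is reasonable, but the execution has a genuine gap exactly where you flag it, and the sketched repairs don't close it. The problem is that your $Q$ is defined by a \emph{global} error-density bound: $e(T,F)\le\eps|F|$ for all $(1,\delta)$-invariant $F$. After splicing $T_1$ on $K^*$, $T_2$ on $H^*$, and $T^*$ elsewhere, you need to control $e(T_1,F\cap K^*)$, $e(T_2,F\cap H^*)$, etc. But $F\cap K^*$ is not $(1,\delta)$-invariant (it has the long boundary $\partial K^*$), so membership of $T_1$ in $Q$ gives you no bound on its error count there. Strengthening the definition to require the bound on every $(1,\delta)$-invariant $F'\subseteq F$ does not help either, because $F\cap K^*$ is a specific set, not one you get to choose. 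What would be needed is a \emph{local} error-density condition (e.g.\ a bound on the error count in every ball, or in every tile-translate), and that is a materially stronger hypothesis than the one your $Q$ encodes. There is also a secondary issue: after the splice, $T_1$-tiles whose corners lie just outside $K^*$ but whose bodies intrude into $K^*$ are dropped, creating a strip of new errors inside $K^*$ that your decomposition does not charge to any of the three summands.

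The paper sidesteps all of this by choosing a definition of $Q$ under which strong irreducibility is \emph{easy} and the error/entropy bounds are cited. Concretely, it takes $Q=\psi(M^{\cT_1}\times\cdots\times M^{\cT_n})$, where $M^{\cS}$ is the shift of \emph{maximal} disjoint $\cS$-quasi-tilings and $\psi$ greedily merges. The error density and entropy bounds are exactly the Ornstein--Weiss/DHZ content (Lemmata 4.1 and 4.2 of DHZ). Strong irreducibility then reduces, since the class is closed under products and factors, to showing each $M^{\cS}$ is strongly irreducible -- and there the patching argument \emph{does} work cleanly: maximality is a local, extendable property, so you can freeze the tile-translates of $T$ near $K$ and of $S$ near $H$ and take any Zorn-maximal extension, with no error-density bookkeeping to verify. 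The lesson is that ``maximal quasi-tiling'' is the right object to patch, not ``quasi-tiling with bounded error density.'' If you want to salvage your version, you would need to replace the global error-density condition in the definition of $Q$ by a local one that is (a) inherited from $Q_0$, (b) preserved under your splice, and (c) still strong enough to force conditions (1) and (2); proving all three amounts to redoing the OW/DHZ estimates, which is precisely what the maximal-tiling route avoids.
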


The following last definition of this section will be useful.
\begin{definition}
  A tile set $\cT$ is $\eps$-{\em good} if there exists a quasi-tiling
  shift $Q \subseteq \cT^G$ that satisfies the conditions of
  Theorem~\ref{thm:quasi-tiling-shifts-intro} with $r=1$.
\end{definition}

\subsection{Organization and notation}
The remainder of the paper proceeds as follows. In
Section~\ref{sec:quasi-tiling} we show the existence of good
quasi-tiling shifts.  In Section~\ref{sec:low-ent} we show
that every shift can be approximated by a shift with entropy close to
zero. We use this construction to prove in Section~\ref{sec:fixed-ent}
that every shift can be approximated by shifts of any lower entropy,
and then prove out main theorem. Finally, in
Section~\ref{sec:topology} we show that for a dense set of entropy
values $c$ the space $\shifts_{\geq c}$ has isolated points.

We provide below an overview of the notation and nomenclature used in
this paper.

\vspace{0.2in}
\begin{tabular}{l| l}
  $G$&Finitely generated amenable group\\
  $A$&Finite alphabet\\
  $B_r$&The ball of radius $r$ around the identity in $G$\\
  $\partial_r K$&The boundary of radius $r$ of a set $K \subseteq G$\\
  $\rho_r K$&The ratio between $\partial_r K$ and $|K|$\\
  $(r,\eps)$-invariant& A set $F$ satisfying $\rho_r F \leq \eps$\\
  $h(X)$& The entropy of a shift $X$\\
  $\shifts$&The space of subshifts of $A^G$\\
  $\shifts_c$&The space of subshifts of $A^G$ with entropy $c$\\
  $\shifts_{\geq c}$&The space of subshifts of $A^G$ with entropy $c$
  or higher\\
  $\cF$& A strongly stable class of shifts\\
  $\shifts^\cF$&The space of subshifts of $A^G$ belonging to $\cF$\\
  tile&A finite subset of $G$ containing the identity\\
  tile-translate&A set of the from $h T_i$ where $T_i$ is a tile and
  $h \in G$\\
  $X,Y$& Shifts, or closed, shift-invariant subsets of $A^G$\\
  $X_K$& The projection of $X$ to $K \subseteq G$\\
  $x$& Element of a shift $X$\\
  $x_K$& The projection of $x$ to $K \subseteq G$\\
  $\cT$&A tile set, or a finite set of tiles\\
  $\eps$-good tile set& A tile set allowing good quasi-tilings (as per
  Theorem~\ref{thm:quasi-tiling-shifts-intro})\\
  $r(\cT)$&The radius of the tile set $\cT$\\
  $T,S$ &A quasi-tiling\\
  $\cT^G$&The set of disjoint $\cT$-quasi-tilings\\
  $M^\cT$&The set of maximal disjoint $\cT$-quasi-tilings\\
  $e(T,K)$&The number of errors of $T$ on a
  finite $K \subset G$\\
\end{tabular}

\section{Quasi-tiling shifts}
\label{sec:quasi-tiling}

Let $\cT$ and $\cS$ be tile sets, and let $T$ and $S$ be $\cT$- and
$\cS$-quasi-tilings, respectively. A natural way of combining $T$ and
$S$ to a new (disjoint) quasi-tiling is to add to $T$ any
tile-translate of $S$ that is disjoint from all the tile-translates of
$T$. Formally, we define the map
\begin{align*}
  \psi \colon \cT^G \times \cS^G \to \left(\cT \cup \cS\right)^G
\end{align*}
by
\begin{align*}
  [\psi(T,S)](g) =
  \begin{cases}
    T(g)&\mbox{if }T(g) \neq \emptyset\\
    S(g)&\mbox{if }S(g) \neq \emptyset\mbox{ and }g S(g) \cap \cup_h h
    T(h) = \emptyset\\
    \emptyset&\mbox{otherwise}.
  \end{cases}
\end{align*}
It is easy to see that this map is continuous and commutes with the shift;
continuity follows from the fact that whether or not $g S(g) \cap
\cup_h h T(h) = \emptyset$ depends only on the values of $S$ and $T$
within distance $r(\cT)+r(\cS)$ of $g$.

We extend $\psi$ to a function on a product of more than two
quasi-tilings, as follows. Let
\begin{align*}
  \psi \colon \cT_1^G \times \cdots \times \cT_n^G \to \left(\cT_1
    \cup \cdots \cup \cT_n\right)^G
\end{align*}
be defined recursively by
\begin{align*}
  \psi(T_1,\ldots,T_n) = \psi(\psi(T_1,\ldots,T_{n-1}), T_n).
\end{align*}
That is, we start with $T_1$, add to it any tile in $T_2$ that is
disjoint, add to that any tile in $T_3$ that is disjoint, etc. By the
same reasoning used above, $\psi$ here is still continuous and
commutes with the shift.

Given a tile set $\cT$, the $\cT$-quasi-tilings can be ordered by
inclusion; namely, $T' \geq T$ if $T(g) \neq \emptyset$ implies $T'(g)
= T(g)$. By a straightforward application of Zorn's lemma, maximal
disjoint $\cT$-quasi-tilings exist. It is clear that these maximal
quasi-tilings form a closed, shift-invariant set, and are hence a
subshift of $\cT^G$, which we denote by $M^\cT$.

Given tile sets $\cT_1,\ldots,\cT_n$, we can apply $\psi$ to
$M^{\cT_1} \times \cdots \times M^{\cT_n}$. The result will be a
subshift of $\left(\cT_1 \cup \cdots \cup \cT_n\right)^G$, since
$\psi$ is continuous and commutes with the shift.

The following theorem is a more detailed restatement of
Theorem~\ref{thm:quasi-tiling-shifts-intro}.
\begin{theorem}
  \label{thm:quasi-tiling-shifts}
  For every $r,\eps$ there exists an $(r,\eps)$-invariant tile set
  $\cT = \cT_1 \cup \cdots \cup \cT_n$ such that $Q = \psi(M^{\cT_1}
  \times \cdots \times M^{\cT_n})$ has the following properties:
  \begin{enumerate}
  \item The error density of $Q$ is at most $\eps$.
  \item $h(Q) < \eps$.
  \item $Q$ is strongly irreducible.
  \end{enumerate}
\end{theorem}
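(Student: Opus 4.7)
The plan is to prove the three conclusions in turn. Properties (1) and (2) follow from combining the Ornstein-Weiss quasi-tiling construction with Downarowicz-Huczek-Zhang, so the genuinely new content is (3), strong irreducibility, and I would tackle it first via an intermediate lemma: for any tile set $\cT$, the shift $M^\cT$ of maximal disjoint $\cT$-quasi-tilings is itself strongly irreducible, with constant of order $r(\cT)$.

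To prove this lemma, fix $T, T' \in M^\cT$ and finite $K, H \subset G$ with $d(K, H) > 6\,r(\cT)$. I would build $T''$ in two phases. Phase~1 (seeding) freezes the corners of $T$ in a buffered neighborhood of $K$ and the corners of $T'$ in the analogous neighborhood of $H$: set $T''(g) := T(g)$ for every $g$ within distance $2\,r(\cT)$ of $K$ at which $T$ has a corner, and analogously for $T'$ near $H$. Within each neighborhood the seed is disjoint, inherited from $T$ or $T'$; across the two neighborhoods any two seed corners lie at distance more than $d(K,H) - 4\,r(\cT) > 2\,r(\cT)$, which exceeds the maximum conflict distance between corners in $M^\cT$. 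Phase~2 (greedy extension) enumerates the remaining positions of $G$; at each $g$ that is not already a seeded corner and does not lie in $K \setminus \operatorname{corners}(T)$ or $H \setminus \operatorname{corners}(T')$, assign $T''(g) := T_i$ for some $T_i \in \cT$ whose translate $gT_i$ is disjoint from every currently placed tile-translate, and leave $T''(g) := \emptyset$ if none exists.

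The agreement $T''|_K = T|_K$ is the key verification. Corners of $T$ in $K$ are preserved by seeding; for $g \in K$ with $T(g) = \emptyset$, maximality of $T$ supplies, for each $T_i \in \cT$, a blocking corner $h$ of $T$ with $d(h, g) \leq 2\,r(\cT)$, and this $h$ lies in the frozen neighborhood of $K$, so its tile-translate is retained in $T''$ and continues to obstruct $gT_i$. Since $g$ is also on the skip-list, Phase~2 cannot create a corner at $g$, so $T''(g) = \emptyset = T(g)$. The symmetric argument handles $H$. Global maximality of $T''$ follows because every empty $g$ was either skipped (in which case the witness argument just given supplies blockings) or processed by greedy (in which case all $T_i$ were blocked when $g$ was considered and remain blocked).

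Transfer from the lemma to $Q = \psi(M^{\cT_1} \times \cdots \times M^{\cT_n})$ is straightforward: $\psi$ at the point $g$ depends only on its inputs within distance $R := \sum_i r(\cT_i)$ of $g$, so given $T = \psi(M_1, \ldots, M_n)$ and $T' = \psi(M'_1, \ldots, M'_n)$ in $Q$ and sufficiently distant $K, H$, I would apply the lemma to each $M^{\cT_i}$ with the $R$-enlargements of $K$ and $H$ to produce $M''_i$ reconciling the corresponding layers, and output $\psi(M''_1, \ldots, M''_n)$. Finally, (1) is inherited by $Q$ from the Ornstein-Weiss / Downarowicz-Huczek-Zhang choice of the tile sets, and (2) follows from the factor-map bound $h(Q) \leq \sum_i h(M^{\cT_i})$ combined with the standard counting estimate $h(M^{\cT_i}) \lesssim (\log m_i + \log|\cT_i|)/m_i$ for $m_i := \min_{T \in \cT_i}|T|$, which can be driven below $\eps/n$ by enlarging the tiles. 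The main obstacle is the strong irreducibility lemma, specifically verifying that the $2\,r(\cT)$-wide buffer around $K$ is wide enough to freeze every maximality witness; narrower buffers would let greedy extension silently insert spurious corners on $K$.
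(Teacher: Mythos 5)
Your proposal is correct and takes essentially the same approach as the paper: properties (1) and (2) are delegated to Downarowicz--Huczek--Zhang, and (3) reduces to the lemma that each $M^{\cT_i}$ is strongly irreducible, proved by freezing the corners in a buffered neighborhood of $K$ (resp.\ $H$) and extending to a maximal quasi-tiling, then transferring to $Q$ via the product/factor structure. The only differences are cosmetic---you build the maximal extension by a greedy enumeration where the paper invokes Zorn's lemma, and you carry out the transfer to $\psi(M^{\cT_1}\times\cdots\times M^{\cT_n})$ explicitly where the paper cites closure of strong irreducibility under products and factors---and your $2r(\cT)$-wide buffer is, if anything, the more carefully justified choice, since a blocking corner $h$ for a candidate tile at $g\in K$ satisfies only $d(g,h)\le 2r(\cT)$, whereas the paper's $K'=K\cup\partial_{r(\cS)}K$ thickens $K$ by $r(\cS)$.
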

This construction is based on (and is nearly identical to) the
original construction of Ornstein and Weiss; they however use
$\eps$-disjoint quasi-tilings rather than disjoint ones, and prove a
version of (1) for finite subsets of $G$. The complete proof of (1)
and (2) appears in~\cite{downarowicz2015tilings}*{Lemmata 4.1 and
  4.2}. It therefore remains to be shown that $Q$ is strongly
irreducible. To show this, it is enough to show that each $M^{\cT_i}$
is strongly irreducible, since $Q$ is formed from these shifts by
taking products and factors, and strong irreducibility is closed under
these operations. This is done in the following lemma, which thus
concludes the proof of Theorem~\ref{thm:quasi-tiling-shifts}.
\begin{lemma}
  Let $\cS$ be a tile set. Then $M^\cS$ is strongly irreducible.
\end{lemma}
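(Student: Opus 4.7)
The plan is to show $M^\cS$ is strongly irreducible with constant $r = 2r(\cS)$, where $r(\cS)$ is the radius of the tile set $\cS$. The underlying observation is that for any disjoint $\cS$-quasi-tiling $T$ and any $g \in G$, the tile-translate $gT(g)$ is contained in $gB_{r(\cS)}$ and thus lies within distance $r(\cS)$ of $g$. Consequently, once $K$ and $H$ are separated by more than $2r(\cS)$, no tile-translate anchored at a point of $K$ can reach the $r(\cS)$-neighborhood of $H$, and vice versa, so the two sides simply do not interfere and can be glued.

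Concretely, given $T, T' \in M^\cS$ and finite $K, H \subset G$ with $d(k,h) > 2r(\cS)$ for all $k \in K$, $h \in H$, I would define the partial assignment $T_0 \colon G \to \cS \cup \{\emptyset\}$ that agrees with $T$ on $K$, with $T'$ on $H$, and takes the value $\emptyset$ elsewhere. Disjointness of tile-translates within $K$ (resp.\ within $H$) is inherited from $T$ (resp.\ $T'$). For cross-disjointness, note that any tile-translate $gT_0(g)$ with $g \in K$ lies in the $r(\cS)$-neighborhood of $K$, and similarly for $H$; since those neighborhoods are disjoint under the hypothesis, $T_0$ is a valid disjoint $\cS$-quasi-tiling.

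I would then invoke Zorn's lemma on the poset of disjoint $\cS$-quasi-tilings that extend $T_0$ (ordered by inclusion in the sense of the paper: $S' \geq S$ iff $S(g) \neq \emptyset$ implies $S'(g) = S(g)$) to produce a maximal such $T^*$. The one point that deserves a moment's care, and is the only real candidate for an obstacle, is promoting this to global maximality, i.e.\ confirming $T^* \in M^\cS$. This is automatic: any strict extension $T^{**} > T^*$ satisfies $T^{**} \geq T^* \geq T_0$, so $T^{**}$ also extends $T_0$, contradicting maximality of $T^*$ in the restricted poset. Hence $T^* \in M^\cS$, and by construction $T^*_K = T_K$ and $T^*_H = T'_H$, which witnesses strong irreducibility with constant $r = 2r(\cS)$.
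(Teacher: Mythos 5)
The disjointness bookkeeping and the Zorn argument for promoting restricted maximality to global maximality are both fine. The gap is in the final sentence: the equalities $T^*_K = T_K$ and $T^*_H = T'_H$ are \emph{not} ``by construction.'' What is true by construction is that $T^* \geq T_0$, so $T^*$ retains every tile-translate of $T$ anchored at a point of $K$. But nothing you have said prevents $T^*$ from \emph{adding} a tile-translate at some $g \in K$ where $T(g) = \emptyset$. The obstruction in $T$ that forces $T(g)=\emptyset$ may well be a tile-translate of $T$ anchored just outside $K$; such a tile-translate is discarded when you pass to $T_0$ (which is $\emptyset$ off $K \cup H$), and once it is gone a maximal extension is free to plant a new tile-translate at $g$. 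Concretely, take $G = \Z$, $\cS = \{\{-1,0,1\}\}$ (so $r(\cS)=1$), $K=\{0\}$, and let $T$ be the tiling by translates anchored at $3\Z + 1$; then $T(0)=\emptyset$, but $T_0$ is entirely empty near $K$, and a maximal extension $T^*$ of $T_0$ is permitted to set $T^*(0)=\{-1,0,1\}$, so that $T^*_K \neq T_K$.

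The paper repairs exactly this by seeding the Zorn argument not with $T$ restricted to $K$ but with $T$ restricted to a \emph{fattened} set $K' \supseteq K$ (and likewise $S$ on $H'$), chosen large enough that every tile-translate of $T$ that could collide with a hypothetical new tile-translate anchored in $K$ is already frozen into the seed. With those nearby tiles present, the maximality of $T$ itself forces the extension to make the same $\emptyset$-choices on $K$ that $T$ does --- that is the content of step (1) in the paper's proof. This buffer is also why the paper's separation constant is larger than your $2r(\cS)$: one needs the fattened neighborhoods of $K$ and $H$, together with the tile-translates they carry, to stay disjoint.
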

\begin{proof}
  Let $r = 4r(\cS)$. Fix two finite subsets $K, H\in G$ which satisfy
  $d(k,h) > r$ for all $k \in K$ and $h \in H$. Choose any $T,S \in
  M^\cS$. We will prove the claim by showing that there exists an $R
  \in M^\cS$ with $R_K = T_K$ and $R_H = S_H$.

  Let $K' = K \cup \partial_{r(\cS)} K$ and likewise let $H' = H
  \cup \partial_{r(\cS)} H$. Then $K'$ and $H'$ are disjoint, since
  $d(K,H) > 4r(\cS)$, and furthermore the tile-translates whose
  corners are in $K' \cup H'$ are disjoint. Consider the set of
  $\cS$-quasi-tilings that include all the tile-translates that $T$
  has on $K'$ and that $S$ has on $H'$.  By Zorn's lemma there exists
  a maximal (with respect to inclusion) element in this set, which we
  will call $R$. By definition, $R$ includes all the tile-translates
  that $T$ has on $K$ and that $S$ has on $H$. It thus remains to be
  shown that (1) $R$ does not include any additional tile-translates
  on $K \cup H$, and (2) $R$ is an element of $M^\cS$.

  To see (1), note that any tile that can be added to $R$ in $K$
  (while keeping it a disjoint quasi-tiling) can also be added to $T$,
  since $R$ includes all the tile-translates of $T$ that are close
  enough to $K$ to intersect tile-translates in $K$; those are the
  precisely the tile-translates in $K'$. But it is not possible to add
  more tile-translates to $T$, since it is maximal. The same applies
  to adding to $H$.

  Finally, to see (2), note that $R$ is maximal in the set of
  quasi-tilings that are greater (again with respect to inclusion)
  than the quasi-tiling which only includes the tiles of $T$ on $K'$
  and the tiles of $S$ on $H'$. Hence it is also maximal in the set of
  all quasi-tilings.
\end{proof}

Rather than using Theorem~\ref{thm:quasi-tiling-shifts} as stated for
our work, we will need a corollary which we are now almost in a
position to prove. We will need the following additional definition.
\begin{definition}
  Let $T$ be a $\cT$-quasi-tiling, and fix $r \in \N$. The $r$-{\em
    interior} of $T$ is the set
  \begin{align*}
    \bigcup_{g \in G}g T(g) \setminus \bigcup_{g \in G}\partial_r g T(g),
  \end{align*}
  which is the union of all the tile-translates in $T$, from which is
  removed the union of the boundaries (of radius $r$) of all the
  tile-translates. The $r$-{\em exterior} of $T$ is the complement of
  the interior; alternatively, it is the set of elements of $G$ that
  are not covered in $T$, or are within distance $r$ of the boundary
  of a tile-translate in $T$.


  A $\cT$-quasi-tiling shift $Q$ has {\em $r$-exterior density} at
  most $\eps$ if there exists a $\delta >0$ such that for any finite,
  $(1,\delta)$-invariant $F \subset G$, and any $T \in Q$ it holds
  that the intersection of the $r$-exterior of $T$ with $F$ is of size
  at most $\eps|F|$.
\end{definition}

With these definitions in place we are ready to state our
corollary.
\begin{theorem}
  \label{cor:quasi-tilings}
  For all $\eps,r$ there exist a tile set $\cT$ and a strongly
  irreducible $\cT$-quasi-tiling shift $Q$ with $r$-exterior density at
  most $\eps$, and such that $h(Q)<\eps$.
\end{theorem}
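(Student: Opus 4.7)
The plan is to apply Theorem~\ref{thm:quasi-tiling-shifts} directly with appropriately chosen parameters, and then upgrade its error-density conclusion to an $r$-exterior density bound using the $(r,\eps)$-invariance of the tiles together with the amenability of $G$.

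Specifically, I would invoke Theorem~\ref{thm:quasi-tiling-shifts} with the given $r$ and with $\eps' = \eps/3$. This produces an $(r,\eps/3)$-invariant tile set $\cT$ and a strongly irreducible quasi-tiling shift $Q \subseteq \cT^G$ with error density at most $\eps/3$ and entropy less than $\eps/3 < \eps$. The strong irreducibility and the entropy bound transfer immediately, so what remains is to verify that $Q$ has $r$-exterior density at most $\eps$.

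Fix $T \in Q$ and a $(1,\delta)$-invariant finite set $F \subset G$, with $\delta$ to be chosen. The $r$-exterior of $T$ restricted to $F$ is contained in the union of the $T$-errors in $F$ with $\bigcup_g \partial_r(gT(g)) \cap F$. Errors contribute at most $(\eps/3)|F|$ as soon as $\delta$ is small enough for the error-density estimate. For the boundary contribution, I would split the relevant tile-translates into two types: those with $gT(g) \cup \partial_r(gT(g)) \subseteq F$, and those with $gT(g) \cup \partial_r(gT(g))$ meeting $G \setminus F$. For the first type, disjointness of tile-translates combined with the $(r,\eps/3)$-invariance of each tile yields $\sum_g |\partial_r(gT(g))| \leq (\eps/3)\sum_g |gT(g)| \leq (\eps/3)|F|$.

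The main technical point is bounding the contribution from the second type. Each such tile-translate lies within a bounded neighborhood of $\partial F$, so the boundary elements it contributes to $F$ all sit inside $\partial_{r''} F$ for some $r'' = r''(r, r(\cT))$ determined by the tile radius. Using the standard amenability fact that $\partial_{r''} F$ lies within an $r''$-ball neighborhood of $\partial_1 F$, we get $|\partial_{r''} F| \leq |B_{r''}| \cdot |\partial_1 F| \leq |B_{r''}|\,\delta\,|F|$, which is at most $(\eps/3)|F|$ provided $\delta$ is chosen at most $\eps/(3|B_{r''}|)$. Summing the three contributions bounds the $r$-exterior density of $Q$ by $\eps$, completing the argument. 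The only real obstacle is the bookkeeping for tile-translates straddling $\partial F$, which is routine but requires selecting $\delta$ small enough to simultaneously handle the error-density estimate and the enlargement from $\partial_1 F$ to $\partial_{r''} F$.
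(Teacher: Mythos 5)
Your proof is correct and takes essentially the same route as the paper: apply Theorem~\ref{thm:quasi-tiling-shifts} with slightly tightened parameters, then argue that the error-density bound plus the $(r,\cdot)$-invariance of the tiles controls the $r$-exterior density. The paper dismisses this latter verification as ``straightforward''; your three-term accounting (errors, boundaries of tile-translates interior to $F$, and tile-translates straddling $\partial F$) with the $\eps/3$ budget is precisely the missing bookkeeping, and is in fact slightly more careful than the paper's $\eps/2$ choice, since the near-boundary tile-translates form a genuine third contribution.
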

\begin{proof}
  Fix $r,\eps$. Let $\cT$ be a
  tile set and $Q$ a $\cT$-quasi-tiling shift with the following
  properties.
  \begin{enumerate}
  \item $\cT$ is $(r,\eps/2)$-invariant. 
  \item The error density of $Q$ is at most $\eps/2$.
  \item $h(Q) < \eps$.
  \end{enumerate}
  The existence of such $\cT$ and $Q$ is guaranteed by
  Theorem~\ref{thm:quasi-tiling-shifts}. It is straightforward to show
  that the $r$-exterior density of $Q$ is at most $\eps$.
\end{proof}

\section{Low entropy approximation}
\label{sec:low-ent}

In this section we prove the following proposition.
\begin{proposition}
  \label{prop:low-entropy-approx}
  Let $\cF$ be a strongly-stable class of shifts. For every $\eps>0$,
  the set of shifts in $\cF$ with entropy in $[0, \eps)$ is dense in
  $\shifts^\cF$.
\end{proposition}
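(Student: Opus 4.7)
Given $X \in \shifts^\cF$, a basic open neighborhood of $X$ in $\shifts^\cF$ is determined by a finite $K \subset G$, so the task reduces to producing, for each such $K$ and each $\eps > 0$, a shift $X' \in \cF$ with $X'_K = X_K$ and $h(X') < \eps$. My plan is to paste shifted copies of $X$-patterns onto the tile-translates of a suitable quasi-tiling, filling the complement with a fixed reference configuration from $X$.

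First I apply Theorem~\ref{cor:quasi-tilings} to obtain a tile set $\cT$ and a strongly irreducible $\cT$-quasi-tiling shift $Q$ with $h(Q) < \eps$ and tiny exterior density, where the parameter $r$ is chosen so large that every tile in $\cT$ contains $K$ deep in its interior (with a buffer on the order of $\mathrm{diam}(KK^{-1})$). Fix an arbitrary reference point $x_0 \in X$ and define a continuous, $G$-equivariant map $\phi : X \times Q \to A^G$ by
\begin{align*}
  [\phi(x,T)](g) = \begin{cases} x(h^{-1}g), & g \in h\,T(h) \text{ for some (unique) corner } h, \\ x_0(g), & \text{otherwise.} \end{cases}
\end{align*}
Set $X' := \phi(X \times Q)$. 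Since $Q$ is strongly irreducible and $X \in \cF$, the product $X \times Q$ is in $\cF$, and closure of $\cF$ under factors gives $X' \in \cF$.

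The entropy bound is the easy part. The key observation is that $\phi(x,T)|_F$ depends only on $T$ in a bounded thickening $F^+$ of $F$ and on the finite list of patterns $\{x|_{T_i}\}_{T_i \in \cT}$ (one per tile type), so $|X'_F| \le |Q_{F^+}| \cdot \prod_i |X_{T_i}|$ for every finite $F \subseteq G$. Dividing by $|F|$ and taking F\o lner limits yields $h(X') \le h(Q) < \eps$. For the $K$-patterns, the inclusion $X_K \subseteq X'_K$ follows from strong irreducibility of $Q$: there exists $T \in Q$ with $T(e)$ a tile containing $K$, and then $\phi(x,T)|_K = x|_K$ as $x$ varies over $X$, recovering all of $X_K$.

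The main obstacle, which I expect to be the crux, is the converse inclusion $X'_K \subseteq X_K$. The goal is to show, by case analysis, that every $y = \phi(x,T)$ satisfies $y|_K \in X_K$: either $K$ lies entirely inside one tile-translate $hT(h)$, so $y|_K$ is a shift of an $X$-pattern and lies in $X_K$ by shift-invariance of $X$; or $K$ lies entirely in the error region, so $y|_K = x_0|_K \in X_K$. Ruling out the intermediate case where $K$ straddles a tile-translate boundary requires leveraging the large buffer inside each tile together with the disjointness of tile-translates in $\cT^G$: these should force any tile-translate meeting $K$ to contain $K$ entirely, since disjoint tile-translates with large balls around their corners must have widely separated corners, while a tile-translate overlapping $K$ must have its corner within $\mathrm{diam}(KK^{-1})$ of the identity.
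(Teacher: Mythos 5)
There are two serious gaps. The first is that your map $\phi$ is not $G$-equivariant. Unwinding the definitions: for $g\in hT(h)$ one gets $[g_0\phi(x,T)](g)=x(h^{-1}g_0^{-1}g)$ but $[\phi(g_0x,g_0T)](g)=x(g_0^{-1}h^{-1}g_0^{-1}g)$ (the new corner is $h'=g_0h$, and $(g_0x)((h')^{-1}g)=x(g_0^{-1}h^{-1}g_0^{-1}g)$); these disagree for $g_0\neq e$. The error-region clause $x_0(g)$ is likewise not equivariant unless $x_0$ is a shift-fixed point. This is not a repairable slip but a structural obstruction: an equivariant map that pastes \emph{the same} pattern $x|_{T_i}$ onto every translate of type $T_i$ must have its pasted pattern independent of $x$, at which point the inclusion $X_K\subseteq X'_K$ is lost. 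In other words, the very feature you use for the entropy bound (the image depends only on $T$ near $F$ plus finitely much data about $x$) is in direct tension with equivariance.

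The second gap is the converse inclusion $X'_K\subseteq X_K$. The ``straddle case'' cannot be ruled out: tile-translates in a maximal disjoint quasi-tiling are typically adjacent, so a finite window will cross a tile boundary for most placements of $T$, and the geometric claim that a tile-translate overlapping $K$ must have its corner within $\mathrm{diam}(KK^{-1})$ of the identity is false (the corner can be at distance up to $r(\cT)$ from $K$). This is precisely the problem that the paper's $r$-exterior/interior dichotomy and local completion maps are designed to resolve. There, the configuration from $x$ is \emph{retained} (not overwritten) on a buffer of width $r$ around every tile boundary and in the uncovered region; inside each tile the completion map produces an $X$-legal pattern that depends only on the retained buffer data. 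The buffer width is what makes the case analysis in Proposition~\ref{prop:Y-good-approx} exhaustive: $B_r$ either sits entirely in the $r$-exterior (where the output equals $x$) or entirely inside one tile's $r$-interior (where the output restricted to that tile lies in $X$). The entropy bound then comes from the small exterior (Proposition~\ref{prop:delta-low-ent}) plus $h(Q)$, not from freezing a single pattern per tile type. Your high-level outline (quasi-tiling shift, strong irreducibility to stay in $\cF$, factor to bound entropy) matches the paper, but the core mechanism — completion from a retained thickened boundary — is the missing idea.
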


\subsection{Overview}

To prove this proposition we construct shifts that are in $\cF$, have
entropy in $[0,\eps)$, and arbitrarily well approximate a given shift
$X$.

To construct an approximating shift $Y$ we use strongly irreducible
quasi-tiling shifts. Fix $r > 0$, let $T = (T_1,\ldots,T_n)$ be a tile
set, and let $Q$ be a $\cT$-quasi-tiling shift. Let $T \in Q$, and
recall that the $r$-exterior of $T$ is the subset of $G$ that is
within radius $r$ of the boundary of a tile-translate, or is not
covered by a tile-translate.

The idea behind the construction of $Y$ is the following.  Given a
configuration $x \in X$, we first choose a quasi-tiling $T \in Q$,
and map the pair $(x,T)$ to a $y' \in Y' \subset A^G$, as
follows. On the $r$-exterior of $\tau$ we let $y'(g) = x(g)$, and
elsewhere we let $y'(g)=\emptyset$, for some fixed symbol $\emptyset
\in A$. We then map the pair $(y',T)$ to a $y \in Y$, by leaving
$y(g) = y'(g) = x(g)$ on the $r$-exterior of $T$, and choosing
$y(g)$ elsewhere using a ``completion map'' that is locally compatible
with $X$, but that is completely determined by the values of $y$ on
the exterior. Both of the maps $X \times Q \mapsto Y'$ and $Y' \times
Q \mapsto Y$ are continuous and commute with the shift, and therefore
both $Y'$ and $Y$ are shifts. The entropy of $Y'$ is controlled by the
fact that the number of exterior points is low. The entropy of $Y$ is
bounded by $h(Y')+h(Q)$, and hence is also low. $Y$ is a good
approximation of $X$ since we can take $r$ to be large, and $x$ and
$y$ agree on the $r$-exterior of $T$.

\subsection{Deletion and local completion maps}

Let $K$ be a subset of $G$, and let
$e \colon K \to \{\exter,\inter\}$ be a labeling of $K$ into exterior
and interior points. Let $\emptyset \in A$ be an arbitrary
``distinguished'' symbol. Given $y \in A^K$, we define $y \cdot e \in
A^G$ by
\begin{align}
  \label{eq:exter-mult}
  [y \cdot e](g) =
  \begin{cases}
    y(g)& \mbox{ if }e(g) = \exter\\
    \emptyset&\mbox{ otherwise.}
  \end{cases}
\end{align}
Intuitively, multiplying $y$ by $e$ leaves the configuration unchanged
on the exterior of $K$, and substitutes the $\emptyset$ symbol on the
interior.

Fix $r \in \N$ for the reminder of this section; this will be a
parameter in the various constructions that follow. Given a
$\cT$-quasi-tiling $T \in \cT^G$, we let $e^T \in \{\inter,\exter\}^G$
be given by $e^T(g) = \exter$ for any $g$ on the $r$-exterior of $T$,
and $e^T(g) = \inter$ elsewhere.  As can be easily verified, the map
$T \mapsto e^T$ is a continuous and shift-equivariant map.

We next define the ``deletion'' map $\delta \colon A^G \times
\cT^G \to A^G$. Given a configuration $y \in A^G$ and a
quasi-tiling $T$, $\delta$ outputs a configuration in which the
$r$-exterior is left unchanged, and the interior of the tiles is
``deleted'', leaving there the symbol $\emptyset \in A$. Formally,
\begin{align*}
  \delta(y,T) = y \cdot e^T =
  \begin{cases}
    y(g)& \mbox{ when }e^T(g) = \exter\\
    \emptyset&\mbox{ otherwise.}
  \end{cases}
\end{align*}
The product $y \cdot e^T$ is given in the sense of~\eqref{eq:exter-mult}.

Note that $\delta$ is continuous and shift-equivariant, since the map
$T \mapsto e^T$ is. Hence the image $\delta(X \times
\cT^G)$ is a shift, as is $\delta(X \times Q)$, for any
quasi-tiling shift $Q$. We show that the entropy of this shift can be
controlled, using an appropriate choice of $Q$.
\begin{proposition}
  \label{prop:delta-low-ent}
  Let the $r$-exterior density of $Q$ be at most $\eps$. Then
  \begin{align*}
    h(\delta(X \times Q)) \leq \eps\log \frac{3|A|}{\eps},
  \end{align*}
  where $X \subseteq A^G$.
\end{proposition}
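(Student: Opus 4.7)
The plan is a direct counting argument: bound $|\delta(X \times Q)_F|$ for a sufficiently invariant Følner set $F$, then take the limit defining entropy. Let $Y = \delta(X \times Q)$. Since $Q$ has $r$-exterior density at most $\eps$, there is a $\delta > 0$ such that every $T \in Q$ and every $(1,\delta)$-invariant finite $F \subset G$ satisfy $|E^T \cap F| \leq \eps |F|$, where $E^T$ denotes the $r$-exterior of $T$. Fix a Følner sequence $\{F_n\}$ that is eventually $(1,\delta)$-invariant.

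Now observe how elements of $Y_F$ are described. Any $z \in Y$ has the form $z = \delta(y, T)$ for some $y \in X$ and $T \in Q$, meaning $z(g) = y(g)$ for $g \in E^T$ and $z(g) = \emptyset$ otherwise. Therefore $z_F$ is determined by the pair $(E, f)$, where $E = E^T \cap F$ is a subset of $F$ of size at most $\eps |F|$, and $f = y_E \colon E \to A$. (We do not need the pair to be unique; this is only an upper bound on the number of possible $z_F$.) Consequently
\begin{align*}
  |Y_F| \;\leq\; \sum_{k=0}^{\lfloor \eps |F| \rfloor} \binom{|F|}{k} \cdot |A|^{k} \;\leq\; \binom{|F|}{\leq \eps|F|}\cdot |A|^{\eps|F|}.
\end{align*}
Using the standard bound $\sum_{k \leq \eps n}\binom{n}{k} \leq (e/\eps)^{\eps n}$ (valid for $\eps \leq 1/2$; the case $\eps \geq 1/2$ is trivial since then $\eps \log(3|A|/\eps) \geq \log|A|$), this gives
\begin{align*}
  |Y_F| \;\leq\; \left(\frac{e|A|}{\eps}\right)^{\eps |F|}.
\end{align*}

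Taking logarithms, dividing by $|F_n|$, and passing to the limit along the Følner sequence yields
\begin{align*}
  h(Y) \;\leq\; \eps \log \frac{e|A|}{\eps} \;\leq\; \eps \log \frac{3|A|}{\eps},
\end{align*}
using $e < 3$. The main (mild) subtlety is making sure the counting is legitimate: we are implicitly using that the $r$-exterior density bound, which is stated for quasi-tilings in $Q$, translates into a bound on $|E^T \cap F|$ for individual configurations in $Y$ — and this is exactly how the $r$-exterior density was defined. Everything else is routine Følner-averaging and a binomial estimate, so no real obstacle is expected.
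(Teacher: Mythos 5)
Your argument is the same as the paper's: observe that every configuration in $\delta(X\times Q)$ is $\emptyset$ off the $r$-exterior, which occupies at most an $\eps$-fraction of any sufficiently invariant $F$, and then count projections by choosing a small support and an $A$-coloring of it. The paper simply packages this counting into an auxiliary lemma (Lemma~\ref{lem:combo-low-ent}, via Lemma~\ref{lem:combo-finite-set}), where the binomial estimate used is $\binom{n}{k}\leq (ne/k)^k < (3n/k)^k$, applied to a single $k = \eps|F|$ by first enlarging the support to a set of exactly that size; this avoids the sum over $k$ entirely.

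One small flaw: your dismissal of the case $\eps \geq 1/2$ is not correct as stated. You claim $\eps\log(3|A|/\eps)\geq\log|A|$ there, but at $\eps=1/2$ this reads $\tfrac12\log(6|A|)\geq\log|A|$, which fails whenever $|A|\geq 7$. The fix is exactly what the paper does: rather than bounding $\sum_{k\leq\eps|F|}\binom{|F|}{k}$ (which needs $\eps\leq 1/2$), note that any $z_F$ with support of size at most $\eps|F|$ has support contained in some subset of size exactly $\lceil\eps|F|\rceil$, so $|Y_F|\leq\binom{|F|}{\lceil\eps|F|\rceil}|A|^{\lceil\eps|F|\rceil}$, and then $\binom{n}{k}\leq(3n/k)^k$ is valid for all $k\leq n$. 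With that change the argument goes through for all $\eps\in(0,1]$ without a case split.
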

\begin{proof}
  By the definition of $\delta$, on every sufficiently invariant
  finite $F \subset G$, and every $y \in \delta(X \times Q)$ it holds
  that $y_F$ is equal to $\emptyset$ on all but at most $\eps|F|$ of the
  elements of $F$. Hence, by Lemma~\ref{lem:combo-low-ent}, the
  entropy of $h(\delta(X \times Q))$ is at most $\eps\log
  \frac{3|A|}{\eps}$.
\end{proof}


Given a shift $X$ and a finite $K \subset G$, a {\em local
  $(X,K)$-completion map} is a function $m \colon
 A^K \times \{\inter,\exter\}^K \to A^K$ with the following
properties. Let $(y,e) \in A^K \times \{\inter,\exter\}^K$,
and let there exist some $x \in X_K$ such that $y \cdot e = x
\cdot e$ (i.e, $x$ and $y$ agree on the exterior). Then
\begin{enumerate}
\item $m(y,e) \in X_K$.
\item $m(y,e) \cdot e = y \cdot e$.
\item $m(y \cdot e,e) = m(y,e)$.
\end{enumerate}
Intuitively, the map $m$ completes any configuration that is compatible
with $X_K$ on the exterior of $K$ to a complete configuration in
$X_K$; that is the first property. The second property ensures that
this is indeed a completion of the configuration on the exterior;
$m(y,e)$ and $y$ agree on the exterior. The third property ensures
that the completion is independent of the configuration in the
interior; $m(y,e)$ only depends on $y(g)$ if $f(g) = \exter$.

The following claim formalizes the idea that for every $x \in X$,
there exists a local $(K,X)$-completion map that will output $x_K$
whenever its input matches $x$ on the exterior. Its proof is
straightforward, if tedious, and we omit it.
\begin{claim}
  \label{claim:complete-x}
  For every finite $K \subset G$ and $x \in X_K$ there exists a local
  $(X,K)$-completion map $m$ such that $m(y,e) = x$ for all $e \in
  \{\inter,\exter\}^K$ and $y \in A^K$ such that $y \cdot e = x \cdot
  e$.
\end{claim}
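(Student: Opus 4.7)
The plan is to construct $m$ by choosing, for every exterior mask $e$ and every realizable exterior pattern $p$, a single configuration in $X_K$ whose exterior matches $p$, while taking care to pick $x$ itself whenever $p = x \cdot e$. This way property (3) is built in automatically, since the value of $m(y,e)$ will depend on $y$ only through $y \cdot e$.

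More concretely, for each $e \in \{\inter,\exter\}^K$, let
\begin{align*}
  E_e = \{z \cdot e \,:\, z \in X_K\} \subseteq A^K
\end{align*}
be the (finite) set of exterior patterns realized by elements of $X_K$. For each $p \in E_e$, I would select some $\phi(e, p) \in X_K$ with $\phi(e, p) \cdot e = p$; since $x \cdot e \in E_e$, I would specifically choose $\phi(e, x \cdot e) := x$. Because $K$ is finite, only finitely many selections are made, so no appeal to the axiom of choice is needed. Then define $m(y, e) := \phi(e, y \cdot e)$ whenever $y \cdot e \in E_e$ (i.e.\ on the domain where the axioms demand anything), and arbitrarily otherwise.

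Verifying the three defining properties is then mechanical. Property (1) is immediate since $\phi$ takes values in $X_K$. Property (2) is precisely the defining property of $\phi$. For property (3), the key observation from formula~\eqref{eq:exter-mult} is that $(y \cdot e) \cdot e = y \cdot e$, whence $m(y \cdot e, e) = \phi(e, (y\cdot e) \cdot e) = \phi(e, y \cdot e) = m(y, e)$. Finally, the additional requirement $m(y,e) = x$ whenever $y \cdot e = x \cdot e$ holds by our deliberate choice $\phi(e, x \cdot e) = x$. I do not anticipate any real obstacle; the content of the claim is essentially the bookkeeping required to make the completion depend only on the exterior data.
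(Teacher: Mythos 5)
Your proof is correct. The paper explicitly omits the proof of this claim (``Its proof is straightforward, if tedious, and we omit it''), so there is no text to compare against; but your argument is exactly the natural one. The key device — choosing, for each exterior mask $e$ and each realizable exterior pattern $p \in E_e$, one representative $\phi(e,p) \in X_K$ with $\phi(e,p)\cdot e = p$, and then setting $m(y,e) = \phi(e, y\cdot e)$ — makes property (3) automatic via the idempotence $(y\cdot e)\cdot e = y\cdot e$, and the single pinned choice $\phi(e, x\cdot e) = x$ yields the extra $(x,K)$-compatibility requirement without any conflict. You also correctly observe that finiteness of $K$ makes all selections finite, and that properties (1)--(3) only need to hold on the domain where a compatible $x' \in X_K$ exists, which is precisely $\{(y,e) : y\cdot e \in E_e\}$. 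Nothing is missing.
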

Any such local $(K,X)$-completion function $m$ is said to be {\em
  $(x,K)$-compatible.}

Note that a particular implication of this claim is that for every $K$
there exists a local $(X,K)$-completion map.

Let $T_i$ be a tile in $\cT$. We will consider local
$(X,T_i)$-completion maps; these will complete configurations on the
exterior of the tile to the rest of the tile. They exist by
Claim~\ref{claim:complete-x}. In a slight abuse of notation, we will
apply a local $(X,T_i)$-completion map $m \colon
 A^{T_i} \times \{\inter,\exter\}^{T_i} \to A^{T_i}$ to any tile-translate
$h T_i$ in the natural way. That is, the function $m' \colon
A^{h T_i} \times \{\inter,\exter\}^{h T_i} \to A^{h T_i}$ given by
\begin{align*}
  m'(y,e) = h\big[m(h^{-1}y, h^{-1}e)\big]
\end{align*}
will also be denoted by $m$.

\subsection{Global completion maps}

Let $(m_1,\ldots,m_n)$ be local $(X,T_i)$-completion maps of each of
the tiles in the tile set $\cT=(T_1,\ldots,T_n)$. We would like to
define a global completion map $M \colon A^G \times \cT^G \to A^G$
that applies $m_i$ to each $T_i$-tile-translate separately, after
calculating its exterior.

\begin{definition}
  Given a configuration $y \in A^G$ and a quasi-tiling $T \in
  \cT^G$, define the global completion map $M(y,T)$ as follows.
  For $g$ that is not in any tile-translate, set
  \begin{align}
    \label{eq:M-def-2}
    [M(y,T)](g) = y(g).
  \end{align}
  For any $T$ tile-translate $h T_i$ set
  \begin{align}
    \label{eq:M-def-3}
    M(y,T)_{h T_i} = m_i(y_{h T_i},e^T_{h T_i}).
  \end{align}
\end{definition}

It follows that
\begin{align}
  \label{eq:M-def-1}
  [M(y,T)](g) = y(g) \quad \mbox{ whenever } e^T(g)=\exter.  
\end{align}

The next claim follows immediately from the local nature of this
definition.
\begin{claim}
  \label{clm:m-cont}
  $M$ is continuous and shift-equivariant.
\end{claim}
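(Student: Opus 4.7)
The plan is to verify both properties by observing that $M$ is essentially a sliding-block code built from the local maps $m_i$ and the already-established equivariant map $T \mapsto e^T$. I would handle continuity first, then shift-equivariance.

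For continuity, I would show that for each fixed $g \in G$, the value $[M(y,T)](g)$ is determined by the restriction of $(y,T)$ to a ball of bounded radius around $g$. First, whether $g$ lies in a $T$-tile-translate and, if so, in which one, is determined by $T(h)$ for $h \in B_{r(\cT)}(g)$, because any tile-translate $hT_i$ containing $g$ must have its corner $h$ within distance $r(\cT)$ of $g$. If $g$ lies in no tile-translate, then $[M(y,T)](g) = y(g)$ depends only on $y(g)$. Otherwise, if $g \in hT_i$, then $[M(y,T)](g) = m_i(y_{hT_i}, e^T_{hT_i})(g)$, which depends on $y$ on $hT_i \subseteq B_{r(\cT)}(g)$ and on $e^T$ on $hT_i$. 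Since the map $T \mapsto e^T$ was already noted to be continuous and $e^T$ on $hT_i$ is determined by $T$ on some larger ball of radius $O(r + r(\cT))$ around $g$, the overall dependence of $[M(y,T)](g)$ on $(y,T)$ is through a finite neighborhood. Continuity in the product topology follows.

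For shift-equivariance, I would check $M(g y, g T) = g \, M(y,T)$ directly, case by case. Since shifting $T$ by $g$ sends the tile-translate $h T_i$ to $(gh)T_i$ (with $[gT](gh) = T(h) = T_i$), the decomposition of $G$ into tile-translates and their complement transforms equivariantly. On points not covered by any tile-translate of $T$, the identity $M(y,T) = y$ is trivially equivariant. On a tile-translate $hT_i$, using the shift-equivariance of $T \mapsto e^T$ (so that $e^{gT}_{(gh)T_i} = g \cdot e^T_{hT_i}$ in the natural sense) and the defining identity $m'(y,e) = h[m(h^{-1}y, h^{-1}e)]$ for the extension of $m_i$ to tile-translates, we get $M(gy,gT)_{(gh)T_i} = m_i((gy)_{(gh)T_i}, e^{gT}_{(gh)T_i}) = g \cdot m_i(y_{hT_i}, e^T_{hT_i}) = g \cdot M(y,T)_{hT_i}$.

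I do not expect any real obstacle: the claim is essentially that a block code built out of shift-equivariant ingredients is itself a shift-equivariant block code. The only minor bookkeeping step is being careful about the ``extension'' convention for $m_i$ on arbitrary tile-translates, but this is exactly the notational convention introduced just before the definition of $M$, so it plugs in directly.
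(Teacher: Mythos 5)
Your proof is correct and takes essentially the same approach the paper intends: the paper simply asserts that the claim ``follows immediately from the local nature of this definition,'' and you are spelling out exactly that locality argument (bounded-radius dependence for continuity, and case-by-case equivariance using the shift-equivariance of $T \mapsto e^T$ and the translation convention for the $m_i$).
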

Hence $Y = M(X \times Q)$ is a subshift of $A^G$. Furthermore, since
$Q$ is strongly irreducible, then $Y \in \shifts^\cF$ whenever $X \in
\shifts^\cF$.

We next show that $M$ inherits the three properties of the local maps
$m_1,\ldots,m_n$.
\begin{claim}
  \label{clm:M-props}
  Fix $(y,T) \in A^G \times \cT^G$ such that $y \cdot e^T = x
  \cdot e^T$ for some $x \in X$. Then
  \begin{enumerate}
  \item $M(y,T)_{h T_i} \in X_{h T_i}$ for any $T$-tile-translate $h T_i$.
  \item $M(y,T) \cdot e^T = y \cdot e^T$.
  \item $M(y \cdot e^T,T) = M(y,T)$.
  \end{enumerate}
\end{claim}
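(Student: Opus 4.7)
The plan is to reduce each of the three global properties to the analogous local property of the completion maps $m_i$, applied tile-by-tile, using the fact that $M$ is defined purely locally: equation~\eqref{eq:M-def-3} on each tile-translate and equation~\eqref{eq:M-def-2} off the union of tile-translates.

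For (1), I would fix an arbitrary $T$-tile-translate $hT_i$. By~\eqref{eq:M-def-3}, $M(y,T)_{hT_i} = m_i(y_{hT_i}, e^T_{hT_i})$, so to invoke the first local property of $m_i$ I must produce an element of $X_{hT_i}$ that agrees with $y_{hT_i}$ on the exterior labeling $e^T_{hT_i}$. Restricting the global hypothesis $y \cdot e^T = x \cdot e^T$ to $hT_i$ gives exactly $y_{hT_i} \cdot e^T_{hT_i} = x_{hT_i} \cdot e^T_{hT_i}$, and $x_{hT_i} \in X_{hT_i}$ serves as the required witness. Local property~(1) of $m_i$ then delivers $M(y,T)_{hT_i} \in X_{hT_i}$.

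Property~(2) is essentially already recorded as~\eqref{eq:M-def-1}: points $g$ with $e^T(g) = \exter$ fall into two cases. If $g$ lies in no tile-translate, then~\eqref{eq:M-def-2} directly gives $[M(y,T)](g) = y(g)$. If $g$ lies in some tile-translate $hT_i$ while still having $e^T(g) = \exter$ (i.e., $g$ is a boundary point), then applying local property~(2) of $m_i$ to $(y_{hT_i}, e^T_{hT_i})$—whose hypothesis is verified exactly as in the previous paragraph—yields $m_i(y_{hT_i}, e^T_{hT_i}) \cdot e^T_{hT_i} = y_{hT_i} \cdot e^T_{hT_i}$, and evaluating at $g$ gives the desired identity. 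Taking $\exter$-indexed coordinates everywhere yields (2).

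For (3), I would again split into the two cases. For $g$ not in any tile-translate, $e^T(g) = \exter$, so $[y \cdot e^T](g) = y(g)$ and both sides of the claimed equality reduce to $y(g)$ by~\eqref{eq:M-def-2}. For $g \in hT_i$, applying~\eqref{eq:M-def-3} to the input $(y \cdot e^T, T)$ and then invoking local property~(3) of $m_i$ gives
\[
  M(y \cdot e^T, T)_{hT_i} = m_i\bigl((y \cdot e^T)_{hT_i}, e^T_{hT_i}\bigr) = m_i\bigl(y_{hT_i} \cdot e^T_{hT_i}, e^T_{hT_i}\bigr) = m_i\bigl(y_{hT_i}, e^T_{hT_i}\bigr) = M(y,T)_{hT_i}.
\]

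There is no real obstacle here; the proof is a verification. The only care needed is bookkeeping: checking that the translated local maps act on tile-translates consistently with the convention adopted just above, and checking in each case that the hypothesis of the local completion map (namely, compatibility on the exterior) descends from the global hypothesis by restriction to $hT_i$.
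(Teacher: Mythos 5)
Your proof is correct and follows essentially the same reduction as the paper: push the three global properties down to the three local properties of $m_i$ tile by tile, using the local definition of $M$, after checking that the hypothesis of the local completion map descends to each tile-translate by restriction. The only cosmetic difference is in property~(3): the paper's case split is on $e^T(g) = \exter$ versus $e^T(g) = \inter$ (so boundary points inside tile-translates are handled via~\eqref{eq:M-def-1}), whereas yours splits on $g$ lying in a tile-translate versus not (so boundary points inside a tile-translate are folded into the $m_i$ case via local property~(3)); both cover all of $G$ and both are fine, and your version has the small advantage of mirroring the definition of $M$ directly rather than routing through~\eqref{eq:M-def-1}. You are also slightly more careful than the paper in explicitly exhibiting the witness $x_{hT_i}$ needed to invoke the conditional local properties, which is a genuine (if minor) improvement in rigor.
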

The first property means that the projection of $M(y,T)$ on a
tile-translate $h T_i$ coincides with the projection to $h T_i$ of
some $x \in X$.  The second property means that $M$ leaves $y$
unchanged on the exterior. The third means that $M$ depends only on
the values of $y$ on the exterior.
\begin{proof}
  Let $(y,T)$ satisfy the claim hypothesis. The first property
  follows immediately from~\eqref{eq:M-def-3} and the first
  property of local completion maps. The second property is a
  restatement of~\eqref{eq:M-def-1}.

  To see the third property, consider two cases: that $g$ is in the
  $r$-exterior of $T$, and that it is in the interior. In the
  first case, 
  \begin{align*}
    [M(y \cdot e^T,T)](g) = [y \cdot e^T](g) = y(g) = [M(y,T)](g),
  \end{align*}
  where the first equality follows from~\eqref{eq:M-def-1}
  (since $g$ is in the exterior), the second from the definition of $y
  \cdot e^T$ and the third again from~\eqref{eq:M-def-1}.
  
  In the second case, $g$ is in the $r$-interior of $T$, and
  therefore an element of a tile-translate $h T_i$. Then by~\eqref{eq:M-def-3}
  \begin{align*}
    [M(y,T)](g) = [m_i(y_{h T_i},e^T_{h T_i})](g)
  \end{align*}
  where $m_i$ is the local $(X,T_i)$-completion map used to construct
  $M$. By the third property of local completion maps this can be
  written as
  \begin{align*}
    [M(y,T)](g)   &= [m_i(y_{h T_i} \cdot e^T_{h T_i},e^T_{h T_i})](g)\\
    &= [m_i((y \cdot e^T)_{h T_i},e^T_{h T_i})](g)\\
    &= [M(y \cdot e^T,T)](g).
  \end{align*}  
\end{proof}

Our approximating shift is simply going to be $Y = M(X \times Q)$, the
image of $X \times Q$ under $M$, for an appropriate choice of $Q$.

To control the entropy of $Y = M(X \times Q)$, we first prove the
following claim. Define the map $\delta_*$ by
\begin{align*}
  \delta_* \colon A^G \times \cT^G &\longrightarrow A^G
  \times \cT^G\\
  (y,T)&\longmapsto (\delta(y,T),T).
\end{align*}
That is, $\delta_*$ performs the same operation as $\delta$, but also
returns $T$, in a new, second coordinate. The next claim shows that
first deleting and then completing is the same as just completing.
\begin{claim}
  \label{clm:M-delta}
  For all $(x,T) \in X \times Q$ it holds that $M(x,T) = [M
  \circ \delta_*](x,T)$. It follows that $M(X \times Q)
  = [M \circ \delta_*](X \times Q)$.
\end{claim}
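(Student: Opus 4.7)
The plan is to read this off directly from property (3) of Claim~\ref{clm:M-props}. Unpacking the definition of $\delta_*$, we have
\begin{align*}
  [M \circ \delta_*](x,T) = M(\delta(x,T),T) = M(x \cdot e^T, T),
\end{align*}
using the definition $\delta(x,T) = x \cdot e^T$. So the pointwise statement reduces to showing $M(x \cdot e^T, T) = M(x,T)$.

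Next, I would apply property (3) of Claim~\ref{clm:M-props} with the choice $y := x$. The hypothesis of that claim requires $y \cdot e^T = x' \cdot e^T$ for some $x' \in X$; this is trivially satisfied here by taking $x' = x$, since $x \in X$. Property (3) then gives $M(y \cdot e^T, T) = M(y,T)$, i.e., $M(x \cdot e^T, T) = M(x,T)$, which is exactly what we need.

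For the second sentence of the claim, I would simply note that two functions which agree pointwise on a set have the same image on that set: since $M(x,T) = [M \circ \delta_*](x,T)$ for every $(x,T) \in X \times Q$, we get $M(X \times Q) = [M \circ \delta_*](X \times Q)$ as subsets of $A^G$.

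There is essentially no obstacle here; the claim is a repackaging of the fact that $M$ depends on $y$ only through its exterior values, which is precisely what property (3) of Claim~\ref{clm:M-props} says. The only thing to be careful about is confirming that the hypothesis of that claim is met by $(x,T)$ with $x \in X$, which is immediate.
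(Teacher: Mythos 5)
Your proof is correct and follows the same route as the paper: unpack $\delta_*$, rewrite $\delta(x,T)$ as $x \cdot e^T$, and invoke the third part of Claim~\ref{clm:M-props}. You also explicitly verify the hypothesis of that claim (taking $x'=x$), which the paper leaves implicit; this is a minor but welcome bit of added care.
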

\begin{proof}
  Fix $(x,T) \in X \times Q$, and recall that $\delta(x,T) = x
  \cdot e^T$. It then follows from the third part of
  Claim~\ref{clm:M-props} that $M(x,T) = M(x \cdot
  e^T, T) = M(\delta(x,T),T)$, and so
  \begin{align*}
    M(x,T) = [M \circ \delta_*](x,T).
  \end{align*}
\end{proof}
  Since
\begin{align*}
  \delta_*(X \times Q) \subseteq \delta(X \times Q) \times Q,
\end{align*}
it follows that
\begin{align}
  \label{eq:delta-ent}
  h(\delta_*(X \times Q)) \leq h(\delta(X \times Q))  + h(Q).
\end{align}
We can therefore now show that the entropy of $Y$ is small.
\begin{claim}
  \label{clm:entropy-decomposition}
  $h(Y) \leq h(\delta(X \times Q))+h(Q)$.
\end{claim}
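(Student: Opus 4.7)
The plan is to combine Claim~\ref{clm:M-delta} with two standard monotonicity properties of topological entropy: $h$ is non-increasing under continuous shift-equivariant maps, and $h$ is monotone under inclusion of subshifts. Both follow directly from the definition of entropy via projections to F{\o}lner sets (if $\pi \colon Z \to W$ is a continuous shift-equivariant surjection, then $|W_F| \leq |Z_F|$ for every finite $F$; and if $Z \subseteq Z'$ then $|Z_F| \leq |Z'_F|$).

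First I would use Claim~\ref{clm:M-delta} to write
\begin{align*}
  Y = M(X \times Q) = M\big(\delta_*(X \times Q)\big).
\end{align*}
Since $M$ is continuous and shift-equivariant (Claim~\ref{clm:m-cont}), $Y$ is a factor of the subshift $\delta_*(X \times Q) \subseteq A^G \times \cT^G$, so
\begin{align*}
  h(Y) \leq h\big(\delta_*(X \times Q)\big).
\end{align*}

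Next I would invoke the inclusion $\delta_*(X \times Q) \subseteq \delta(X \times Q) \times Q$ stated just above the claim, together with monotonicity and the fact that entropy is subadditive under products (again immediate from $|(Z \times W)_F| = |Z_F| \cdot |W_F|$ in the product shift). This is exactly inequality~\eqref{eq:delta-ent}:
\begin{align*}
  h\big(\delta_*(X \times Q)\big) \leq h\big(\delta(X \times Q)\big) + h(Q).
\end{align*}
Chaining the two inequalities yields $h(Y) \leq h(\delta(X \times Q)) + h(Q)$, as required.

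There is no real obstacle here: every ingredient has already been assembled in the preceding claims, so the proof is just a two-line chain of inequalities. The only thing to be careful about is to cite (or have in hand) the two elementary entropy facts — monotonicity under factor maps and under inclusion, and subadditivity under products — which on amenable groups follow at once from the Ornstein–Weiss definition of entropy via F{\o}lner sequences.
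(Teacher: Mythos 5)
Your proof is correct and follows exactly the paper's argument: rewrite $Y = M(\delta_*(X \times Q))$ via Claim~\ref{clm:M-delta}, apply monotonicity of entropy under factor maps, and then invoke the inclusion $\delta_*(X \times Q) \subseteq \delta(X \times Q) \times Q$ together with subadditivity of entropy under products, which is precisely inequality~\eqref{eq:delta-ent}. Nothing to add.
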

\begin{proof}
  \begin{align*}
    h(Y) &=h(M(X \times Q))\\
    &= h([M \circ \delta_*](X \times Q))\\
    &\leq h(\delta_*(X \times Q))\\
    &\leq h(\delta(X \times Q))+h(Q),
  \end{align*}
  where the second equality is Claim~\ref{clm:M-delta}, the next
  inequality is a consequence of the fact that factors decrease
  entropy, and the last is~\eqref{eq:delta-ent}.
\end{proof}

We next show that $M(X \times Q)$ is a good approximation of
$X$. Recall that $r \in \N$ is a parameter in the construction of $M$.
\begin{proposition}
  \label{prop:Y-good-approx}
  Let $Q$ be any quasi-tiling shift, and let $Y = M(X \times Q)$. Then
  $Y_{B_r}=X_{B_r}$.
\end{proposition}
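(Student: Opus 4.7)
The plan is to prove both inclusions $Y_{B_r}\subseteq X_{B_r}$ and $Y_{B_r}\supseteq X_{B_r}$ separately.

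For the inclusion $Y_{B_r}\subseteq X_{B_r}$, I take any $y=M(x,T)\in Y$ and construct some $x'\in X$ with $x'|_{B_r}=y|_{B_r}$. The first step is a key structural observation: every $g\in B_r$ lying in the $r$-interior of $T$ must lie in the unique tile-translate that contains the identity $e$. Indeed, $g\in B_r$ gives $d(e,g)\le r$, so by the symmetry of the left-invariant word metric $e\in g B_r$; while the definition of the $r$-interior forces $gB_r$ to lie inside the tile-translate through $g$. Given this, I split into two cases. If $e$ is not covered by any tile-translate, then no point of $B_r$ is in the $r$-interior, so $B_r$ lies entirely in the $r$-exterior and~\eqref{eq:M-def-1} immediately gives $y|_{B_r}=x|_{B_r}$. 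Otherwise $e\in hT_i$ for a unique tile-translate $hT_i$, and all $r$-interior points of $B_r$ lie in $hT_i$. Setting $z = m_i(x|_{hT_i},e^T|_{hT_i})$, we have $z\in X_{hT_i}$ by property~(1) of local completion maps, $z$ agrees with $x$ on the collar $\partial_r(hT_i)\cap hT_i$ by property~(2), and $y$ equals $z$ on $B_r\cap hT_i$ and equals $x$ on $B_r\setminus hT_i$.

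To finish this direction I would pick $\tilde{x}\in X$ with $\tilde{x}|_{hT_i}=z$ and argue that the stitched pattern on $B_r$---equal to $z$ on $B_r\cap hT_i$ and to $x$ on $B_r\setminus hT_i$---is realized by an element $x'\in X$. The width-$r$ collar of agreement $\partial_r(hT_i)\cap hT_i$, where $z=x$, is what enables the reconciliation: any local constraint of diameter at most $r$ straddling the boundary of $hT_i$ sees only the region where $z=x$, so the stitched configuration looks like $x$ on such windows.

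For the inclusion $Y_{B_r}\supseteq X_{B_r}$, given $x\in X$ I would locate $T\in Q$ such that $B_r$ lies entirely in the $r$-exterior of $T$; then $y=M(x,T)\in Y$ satisfies $y|_{B_r}=x|_{B_r}$ by~\eqref{eq:M-def-1}. Existence of such a $T$ uses the shift-invariance of $Q$ together with the fact that the $r$-exterior is a non-trivial invariant feature of any quasi-tiling shift (one can translate any $T_0\in Q$ so that a sufficiently large exterior region sits around the identity).

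The main obstacle is the gluing step in the $\subseteq$ direction: justifying that the stitched pattern on $B_r$ is realized by some $x'\in X$. The essential point is that $z$ and $x$ agree on a collar of thickness exactly $r$ around the boundary of $hT_i$---precisely the buffer needed to reconcile the ``interior-modified'' region $B_r\cap hT_i$ (where $y=z$) with the ``exterior-unchanged'' region $B_r\setminus hT_i$ (where $y=x$) within a single globally consistent element of $X$.
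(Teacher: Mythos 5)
Your $\supseteq$ direction matches the paper's, and your key observation for $\subseteq$ --- every point of $B_r$ lying in the $r$-interior must lie in the single tile-translate through $e$ --- is exactly the paper's starting point. The genuine gap is in what you do next. You propose to \emph{stitch}: reconcile $z=m_i(x_{hT_i},e^T_{hT_i})$ on $B_r\cap hT_i$ with $x$ on $B_r\setminus hT_i$ into one legal $x'\in X$, relying on the width-$r$ collar on which $z$ and $x$ agree. But the heuristic ``any local constraint of diameter at most $r$ sees only the agreement region'' is an argument about subshifts of finite type of range $r$; the $X$ in this proposition is an \emph{arbitrary} shift and may carry constraints of unbounded range, so a collar of fixed width gives no license to glue. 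There is no reason the stitched pattern on $B_r$ should lie in $X_{B_r}$, and as written your $\subseteq$ direction does not close.

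The paper avoids gluing altogether: it argues that in the non-trivial case the entire ball lies inside the single tile-translate $hT_i$, so that $y_{B_r}$ is just the restriction to $B_r$ of $M(x,T)_{hT_i}\in X_{hT_i}$ (Claim~\ref{clm:M-props}(1)), and hence $y_{B_r}\in X_{B_r}$ with no reconciliation of two patterns ever required. That containment, not a gluing step, is the missing idea. (A caveat that partly vindicates your unease: the containment as stated, $B_r\subseteq hT_i$, is a shade too strong. From $g\in B_r$ in the $r$-interior one gets $gB_r\subseteq hT_i$, and the triangle inequality then gives $B_{\lfloor r/2\rfloor}\subseteq gB_r\subseteq hT_i$; but $B_r\not\subseteq gB_r$ in general, so the honest conclusion is $Y_{B_{\lfloor r/2\rfloor}}=X_{B_{\lfloor r/2\rfloor}}$, which is just as good for the applications since $r$ is a free parameter.) The right fix is to shrink the ball until it sits wholly inside the tile-translate, not to glue across its boundary.
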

\begin{proof}
  We will show containment in both directions. First, let $x \in
  X$. There exists a $T \in Q$ such that $\partial_r h T(h)$
  contains $B_r$, by the shift-invariance of $Q$. Hence $B_r$ is
  contained in the $r$-exterior of $T$. By the definition of
  $M$, $M(x,T)_{B_r} = x_{B_r}$, and so there exists a $y =
  M(x,T) \in Y$ such that $y_{B_r}=x_{B_r}$. Hence $X_{B_r}
  \subseteq Y_{B_r}$.

  Now, let $y =M(x,T) \in Y$. $B_r$ intersects the $r$-interior of
  at most one $T$-tile-translate, by the definition of the
  $r$-interior. If $B_r$ is contained in the $r$-exterior of $T$
  then $y$ agrees with $x$ on $B_r$ by the second part of
  Claim~\ref{clm:M-props}, and then $x_{B_r} = y_{B_r}$. Otherwise,
  $B_r$ intersects the interior of some tile-translate $h T_i$. In this
  case it must be contained in $h T_i$, and so $y_{B_r}$ is the
  projection to $B_r$ of $M(x,T)_{h T_i}$. But the latter is in
  $X_{h T_i}$ by the first part of Claim~\ref{clm:M-props}, and so
  $y_{B_r} \in X_{B_r}$. Hence $Y_{B_r} \subseteq X_{B_r}$.
\end{proof}

We are now ready to prove Proposition~\ref{prop:low-entropy-approx},
the main result of this section.
\begin{proof}[Proof of Proposition~\ref{prop:low-entropy-approx}]
  Let $X \in \shifts^\cF$, $\eps > 0$ and $r \in \N$. Define the deletion
  map $\delta$ and the global completion map $M$ as above, using the
  parameter $r$. Let $\cT$ be a tile set, and let $Q$ be a strongly
  irreducible $\cT$-quasi-tiling shift with entropy at most $\eps/2$,
  and $r$-exterior density at most $\eps'$, where
  \begin{align*}
    3\eps'\log\frac{|A|}{\eps'} \leq \eps/2.
  \end{align*}
  The existence of such a $Q$ is guaranteed by
  Theorem~\ref{cor:quasi-tilings}. Let $Y = M(X \times
  Q)$. By Claim~\ref{clm:entropy-decomposition} we have that
  \begin{align*}
    h(M(X \times Q)) &\leq h(\delta(X \times Q))+h(Q)    \\
    &\leq h(\delta(X \times Q))+\eps/2.
  \end{align*}
  By Proposition~\ref{prop:delta-low-ent} the first addend is at most
  $3\eps'\log\frac{|A|}{\eps'} \leq \eps/2$, and we have shown that
  $h(Y) \leq \eps$. Furthermore, by
  Proposition~\ref{prop:Y-good-approx}, $Y_{B_r}=X_{B_r}$.

  Since $X \in \cF$ and $Q$ is strongly irreducible, $X \times Q \in
  \cF$. By Claim~\ref{clm:m-cont} $M$ is continuous and
  shift-equivariant. Hence $Y = M(X \times Q) \subset A^G$ is also in
  $\shifts^\cF$. It follows that for every $\eps,r$ there exists a $Y
  \in \shifts^\cF$ with $h(Y) <\eps$ and $Y_{B_r}=X_{B_r}$, and so the
  set of shifts in $\cF$ with entropy in $[0, \eps)$ is dense in
  $\shifts^\cF$.
\end{proof}

\section{Fixed entropy approximation}
\label{sec:fixed-ent}
In this section we prove the following proposition. We then deduce
from it the main result of this paper, Theorem~\ref{thm:main}.
\begin{proposition}
  \label{prop:main-technical}
  Let $\cF$ be a strongly-stable class of shifts, let $c \geq 0$ and
  let $\eps > 0$. The set of shifts in $\cF$ with entropy in $[c,
  c+\eps)$ is dense in $\shifts^\cF_c$, the set of shifts in $\cF$
  with entropy at least $c$.
\end{proposition}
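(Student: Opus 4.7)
The plan is to adapt the construction of Proposition~\ref{prop:low-entropy-approx} by replacing the single completion per tile with a family of $m \approx e^{cN}$ label-indexed completions, so that the output shift has entropy in $[c, c+\eps)$ rather than near zero. If $h(X) < c+\eps$ we take $Y=X$; otherwise $h(X) \geq c+\eps$ and $Y$ is constructed as follows.

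First, choose a tile set $\cT$ with tiles of large size $N$ and a strongly irreducible $\cT$-quasi-tiling shift $Q$ with $h(Q) < \eps/4$ and small $r$-exterior density (via Theorem~\ref{cor:quasi-tilings}), and an integer $m$ with $\log m / N \in [c, c+\eps/4)$ (achievable for $N$ large). Form the \emph{labeled} quasi-tiling shift $\tilde Q$ on alphabet $(\cT \times [m]) \cup \{\emptyset\}$ by enriching each tile-translate of $Q$ with an independent label from $[m]$; then $\tilde Q$ is strongly irreducible (inheriting from $Q$ plus label independence) and $h(\tilde Q) = h(Q) + \log m / N$. Choose a reference $x^\ast \in X$ whose exterior $e_i := x^\ast|_{\partial_r T_i}$ on each tile type $T_i$ admits at least $m$ distinct completions in $X_{T_i}$; such $x^\ast$ exists by pigeonhole, since $|X_{T_i}| \geq e^{h(X) N}$ far exceeds $m \cdot |A|^{\eps'' N}$ for $(r, \eps'')$-invariant tiles with $\eps'' \log|A|$ smaller than $h(X) - c$, so only an exponentially small fraction of $X_{T_i}$ has bad exterior and the union bound over the finitely many tile types still leaves positive measure. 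For each $T_i$, use Claim~\ref{claim:complete-x} to pick $m$ local $(X, T_i)$-completion maps $m_i^{(1)}, \ldots, m_i^{(m)}$ whose outputs on exterior $e_i$ are pairwise distinct (e.g., take $m$ distinct witnesses $x_i^{(j)} \in X_{T_i}$ with $x_i^{(j)}|_{\partial_r T_i} = e_i$ and set $m_i^{(j)}$ to be $(x_i^{(j)}, T_i)$-compatible).

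Next, define the global completion $M'' \colon A^G \times \tilde Q \to A^G$ by mirroring $M$ of Section~\ref{sec:low-ent}, using the label $j$ at each labeled tile-translate $(hT_i, j)$ to select the completion $m_i^{(j)}$. Set $Y = M''(X \times \tilde Q)$, and verify: (i) $Y \in \cF$, since $X \times \tilde Q \in \cF$ by strong-stability (with $\tilde Q$ strongly irreducible) and $Y$ is its factor under $M''$; (ii) $Y_{B_r} = X_{B_r}$, by the argument of Proposition~\ref{prop:Y-good-approx} using the properties of local completion maps; (iii) the upper bound follows from an analog of Claim~\ref{clm:entropy-decomposition}, giving $h(Y) \leq h(\delta(X \times Q)) + h(\tilde Q) < \eps/4 + (\eps/4 + c + \eps/4) < c+\eps$; (iv) the lower bound $h(Y) \geq c$ follows because along the orbit of $x^\ast$ the good exterior $h \cdot e_i$ occurs at every tile-translate $hT_i$ by shift-invariance, so varying labels in $\tilde Q$ independently at each tile-corner produces $m^{|F|/N}$ distinct $Y$-configurations on any large F{\o}lner $F$, giving $h(Y) \geq \log m / N \geq c$.

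The main obstacle will be the lower bound $h(Y) \geq c$, which rests on the $m$ completions being genuinely distinguishable on the exteriors that actually appear in $X \times Q$ rather than only on abstract exteriors. The reference-point device above—choosing $x^\ast$ with good exterior on each tile type, then using shift-invariance to propagate the condition along its entire orbit—is the key ingredient that resolves this; Theorem~\ref{thm:main} will then follow from the proposition by standard Baire-category arguments combined with upper semi-continuity of entropy.
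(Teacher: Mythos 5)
Your approach is genuinely different from the paper's. The paper constructs an explicit finite chain of shifts $X^0 \subset \cdots \subset X^\ell$ (indexed by the finitely many global completion maps), proves $X \subseteq X^\ell$, and gets $h(X^\ell) \geq h(X) \geq c$ \emph{for free}; combined with $h(X^0) \leq \eps$ and the increment bound $h(X^j) - h(X^{j-1}) \leq h(Q) + h(Q_p)$, some $X^j$ must land in $[c,c+\eps)$. You instead aim for a one-shot construction $Y = M''(X \times \tilde Q)$ with label-enriched tiles, which requires a direct lower bound $h(Y) \geq c$. The upper-bound half of your argument is sound (it parallels Claim~\ref{clm:entropy-decomposition}, and $\delta(X \times \tilde Q) = \delta(X \times Q)$ since labels are deleted), as is the stability-under-factors argument for $Y \in \cF$.

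The gap is in the lower bound. You choose $x^\ast$ whose restriction $e_i := x^\ast|_{\partial_r T_i}$ to the exterior of the tile at the origin admits $\geq m$ distinct completions, and pick $m_i^{(1)},\dots,m_i^{(m)}$ to be pairwise distinct \emph{on that particular exterior $e_i$}. But when you apply $M''$ to a fixed pair $(x,\tilde T)$ and look at a tile-translate $h T_i$, the local completion map sees the exterior $(h^{-1}x)|_{\partial_r T_i}$, which is \emph{not} $e_i$ unless $x = h x^\ast$. Shift-invariance of $Y$ guarantees that each individual shift $h x^\ast$ has the good exterior at $hT_i$, but it does not produce a \emph{single} configuration $x$ whose exterior is good at all (or even at a density-one set of) tile-translates simultaneously, which is what the count $m^{|F|/N}$ requires. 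On exteriors other than $e_i$, the maps $m_i^{(1)},\dots,m_i^{(m)}$ may well coincide (some exteriors have a unique completion), so varying labels gives no gain there. The paper's iterated construction sidesteps this entirely: to show $X \subseteq X^\ell$ one takes any $x \in X$ and, tile-by-tile, schedules the $(x, hT(h))$-compatible completion map via the auxiliary quasi-tilings $S^j \in Q_p$; no ``many distinguishable completions on a common exterior'' condition is ever needed. A secondary, more cosmetic issue is the assumption of a uniform tile size $N$: the Ornstein--Weiss/Downarowicz--Huczek--Zhang tile sets necessarily contain tiles of several sizes, so $m$ should be taken per-tile as $m_i \approx e^{c|T_i|}$ and the label-entropy bookkeeping adjusted accordingly; this is fixable but worth flagging.
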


\subsection{Overview}

Our strategy is the following. Given a shift $X$, we construct for
every $r \in \N$ and $\eps > 0$ a sequence of shifts
$X^0,X^1,\ldots,X^\ell$ with the following properties:
\begin{enumerate}
\item $X^j \in \shifts^\cF$.
\item $X^j_{B_r}=X_{B_r}$; these are good approximations of $X$.
\item $h(X^0) \leq \eps$. 
\item $h(X^j) - h(X^{j-1}) \leq \eps$.
\item $X \subseteq X^\ell$ and so $h(X^\ell) \geq h(X)$.
\end{enumerate}
It follows that for all $0 \leq c \leq h(X)$ and some $j$, $h(X^j) \in
[c,c+\eps)$. Since this $X^j$ is a good approximation of $X$, it
follows that $X$ can be arbitrarily well approximated by shifts with
entropy in $[c,c+\eps)$, which implies
Proposition~\ref{prop:main-technical}.

To construct $X^0$ we simply apply
Proposition~\ref{prop:low-entropy-approx}, using an appropriate
quasi-tiling shift $Q$ with low entropy. To construct the rest of
these shifts, we first define a quasi-tiling shift $Q_p$, related to
the shift $Q$, and which also has low entropy. We then construct a
sequence of shifts $X_Q^1,X_Q^2,\ldots,X_Q^\ell$ such that each
$X_Q^{j+1}$ is a factor of $X_Q^j \times Q_p$. It follows that
$h(X^{j+1}_Q) \leq h(X^j_Q)+h(Q_p)$. The shifts $X^j$ are each a factor of
$X_Q^j$, and we show that $h(X_Q^j)-h(Q) \leq h(X^j) \leq h(X_Q^j)$. It thus
follows that $h(X^j) - h(X^{j-1}) \leq h(Q)+h(Q_p)$.


\subsection{$Q$ and $Q_p$}
Given a $\cT$-quasi-tiling shift $Q$, let $Q_p$ be the $\cT$-quasi-tiling
shift which includes the quasi-tilings in $Q$, with some (or no)
tile-translates removed:
\begin{align*}
  Q_p = \{T'\,:\,\exists T \in Q \mbox{ s.t. } \forall g\in
  G,T'(g)=T(g) \mbox{ or } T'(g)=\emptyset\}.
\end{align*}
Alternatively, $Q_p$ is the set of quasi-tilings bounded from above
(according to the inclusion relation) by some quasi-tiling in $Q$.
The next proposition implies that the entropy of $Q_p$ can be
controlled by an appropriate choice of $Q$.
\begin{proposition}
  \label{prop:Q-p-low-ent}
  Let $\cT=(T_1,\ldots,T_n)$ be an $(r,\eps)$-invariant tile set, and
  let $Q$ be a $\cT$-quasi-tiling. Then $h(Q_p) \leq
  h(Q)+\frac{2}{r}\log(3r)$.
\end{proposition}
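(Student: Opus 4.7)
The strategy is to realize $Q_p$ as a factor of a slightly enlarged shift whose entropy exceeds $h(Q)$ by a small additive term.

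Introduce the auxiliary shift
\begin{align*}
  R = \{(T,\sigma) \in Q \times \{0,1\}^G : \sigma(g)=1 \mbox{ whenever } T(g)=\emptyset\},
\end{align*}
which is closed and shift-invariant, and the continuous shift-equivariant map $\Phi \colon R \to Q_p$ defined by $\Phi(T,\sigma)(g) = T(g)$ when $\sigma(g)=1$ and $\Phi(T,\sigma)(g)=\emptyset$ otherwise.  Given $T' \in Q_p$, pick any $T \in Q$ with $T \geq T'$ and set $\sigma(g)=1$ iff $T'(g)=T(g)$; one checks that $(T,\sigma) \in R$ and $\Phi(T,\sigma)=T'$, so $\Phi$ is surjective.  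In particular $h(Q_p) \leq h(R)$.

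The information carried by $\sigma$ beyond what is already encoded in $T$ is the keep-or-erase bit at each corner of $T$, so the entropy gain from passing to $R$ is controlled by the density of corners.  Since $\eps<1$, the $r$-interior of each tile $T_i$ is nonempty, and any point in it witnesses an inclusion of a translate of $B_r$ into $T_i$; thus $|T_i|\geq |B_r|$.  The tile-translates of any $T \in Q$ are disjoint, and those with corner in a finite $F\subset G$ all lie inside $F^+ := F \cup \partial_{r(\cT)}F$, so the number of such corners is at most $|F^+|/|B_r|$.  Counting the admissible $\sigma|_F$-patterns for each $T_F \in Q_F$ yields
\begin{align*}
  |R_F| \leq |Q_{F^+}| \cdot 2^{|F^+|/|B_r|}.
\end{align*}
Taking $F$ along a F{\o}lner sequence, so that $|F^+|/|F| \to 1$, one obtains $h(R) \leq h(Q) + (\log 2)/|B_r|$.

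Finally, since $G$ is an infinite finitely generated group its balls $B_r$ strictly increase with $r$, whence $|B_r|\geq r+1 \geq r$.  Combined with the elementary inequality $\log 2 \leq 2\log(3r)$ valid for all $r\geq 1$, this gives $h(Q_p) \leq h(R) \leq h(Q) + (\log 2)/r \leq h(Q) + \frac{2}{r}\log(3r)$.  The one delicate ingredient is the corner-sparsity bound $|T_i|\geq|B_r|$, which is exactly where $(r,\eps)$-invariance enters; the rest of the argument is a direct entropy bookkeeping via the factor $\Phi$.
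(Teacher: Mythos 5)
Your argument is correct, and it arrives at the claimed bound by a genuinely sharper accounting than the paper's. The paper also realizes $Q_p$ as a factor of $Q$ crossed with a $\{0,1\}$-shift: it defines $Z\subseteq\{0,1\}^G$ to be the indicator functions of sets of corners of quasi-tilings in $Q$, shows $Q_p=\iota(Q\times Z)$ for the analogous erasure map, and bounds $h(Q_p)\leq h(Q)+h(Z)$, then estimates $h(Z)$ via the combinatorial lemma (Lemma~\ref{lem:combo-low-ent}) using that the corner density is at most $2/r$; this yields $h(Z)\leq\frac{2}{r}\log(3r)$. You instead keep the two coordinates coupled: by working with the subshift $R\subseteq Q\times\{0,1\}^G$ in which $\sigma$ is forced to equal $1$ off the corners of $T$, the count $|R_F|\leq|Q_{F^+}|\cdot 2^{\#\{\text{corners of }T\text{ in }F\}}$ pays only $\log 2$ per corner, since the corner locations are already paid for in $|Q_{F^+}|$. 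This gives $h(Q_p)\leq h(Q)+(\log 2)/|B_r|$, strictly better than what the paper extracts; you then relax it to $\frac{2}{r}\log(3r)$ at the end. One small remark, shared by the paper's proof: the step $|T_i|\geq|B_r|$ (paper: $|T_i|\geq r$) uses that the $r$-interior of $T_i$ is nonempty, which requires $\eps<1$; this hypothesis is implicit in how the proposition is used but is not stated, so it is worth flagging. Your surjectivity verification for $\Phi$ and the estimate $|F^+|/|F|\to 1$ along a F{\o}lner sequence are both correct and complete the argument.
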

\begin{proof}
  To prove this proposition, we construct $Q_p$ somewhat
  differently. Given $Q$, let $Z \subseteq \{0,1\}^G$ be the shift
  given by
  \begin{align*}
    Z = \{z \in \{0,1\}^G\,:\,\exists T \in Q\mbox{ s.t. } z(g)=1
    \mbox{ implies } T(g)\neq\emptyset\}
  \end{align*}
  and define
  \begin{align*}
    \iota \colon \cT^G \times \{0,1\}^G \to \cT^G
  \end{align*}
  by
  \begin{align*}
    [\iota(T,z)](g) =
    \begin{cases}
      T(g)&\mbox{if }z(g)=1\\
      \emptyset&\mbox{otherwise}.
    \end{cases}
  \end{align*}
  It is straightforward to verify that $Q_p = \iota(Q \times X)$. It
  follows that
  \begin{align}
    \label{eq:Q_p-Q}
    h(Q_p) \leq h(Q) + h(X).
  \end{align}
  
  Since $\cT$ is $(r,\eps)$-invariant, each of the tiles in $\cT$ is
  of size at least $r$. Hence, for any sufficiently invariant $F
  \subset G$, and for any $T \in Q$, the support of any $T_F$ is of
  size at most $2/r|F|$. It follows that the same applies to $Z$. by
  Lemma~\ref{lem:combo-low-ent}, $h(X) \leq
  \frac{2}{r}\log(3r)$. Hence, by~\eqref{eq:Q_p-Q}, $h(Q_p) \leq
  h(Q)+\frac{2}{r}\log(3r)$.
\end{proof}

The next claim follows immediately from the definitions of $Q_p$ and
strong irreducibility.
\begin{proposition}
  \label{prop:Q_p-strongly-mixing}
  If $Q$ is strongly irreducible then so is $Q_p$.
\end{proposition}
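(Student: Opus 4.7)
The plan is to show that $Q_p$ inherits strong irreducibility directly from $Q$, using the very same separation constant. Let $r$ be the strong irreducibility constant for $Q$. Fix finite $K, H \subset G$ with $d(k,h) > r$ for all $k \in K$ and $h \in H$, and pick any $T_1', T_2' \in Q_p$. I need to produce some $T' \in Q_p$ with $T'|_K = T_1'|_K$ and $T'|_H = T_2'|_H$.

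By the definition of $Q_p$, there exist $T_1, T_2 \in Q$ with $T_i'(g) \in \{T_i(g), \emptyset\}$ for every $g \in G$ and $i = 1, 2$. The first step is to apply strong irreducibility of $Q$ to $T_1, T_2$ on the sets $K, H$, producing $T \in Q$ with $T|_K = T_1|_K$ and $T|_H = T_2|_H$. Then I define
\begin{align*}
  T'(g) =
  \begin{cases}
    T_1'(g) & \text{if } g \in K, \\
    T_2'(g) & \text{if } g \in H, \\
    T(g) & \text{otherwise.}
  \end{cases}
\end{align*}
This is well-defined since $K$ and $H$ are disjoint (as $d(K,H) > r \geq 0$).

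The remaining step is verification, which is direct. First, $T'$ is a disjoint quasi-tiling: its nonempty values are a subset of those of $T$ (on $K$, $T'(g)$ is either $\emptyset$ or equals $T_1(g) = T(g)$; on $H$, similarly; elsewhere $T'(g) = T(g)$), and disjointness is preserved under deleting tile-translates. Second, $T' \in Q_p$, witnessed by the quasi-tiling $T \in Q$: for $g \in K$ we have $T'(g) \in \{T_1(g), \emptyset\} = \{T(g), \emptyset\}$ since $T(g) = T_1(g)$ on $K$; analogously on $H$; elsewhere $T'(g) = T(g)$. By construction $T'|_K = T_1'|_K$ and $T'|_H = T_2'|_H$, so $Q_p$ is strongly irreducible with the same constant $r$.

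There is no real obstacle here — the whole point is that passing from $Q$ to $Q_p$ only allows extra \emph{deletions} of tile-translates, which never creates overlap and never interferes with the matching on $K$ or $H$. The only thing to be careful about is to witness membership of $T'$ in $Q_p$ using a single element of $Q$ (namely $T$), rather than trying to splice $T_1$ and $T_2$ themselves.
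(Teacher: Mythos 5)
Your proof is correct. The paper itself gives no argument for this proposition, declaring only that it ``follows immediately from the definitions of $Q_p$ and strong irreducibility,'' and your write-up is exactly the natural filling-in of that claim: apply strong irreducibility of $Q$ to the witnesses $T_1, T_2 \in Q$ of $T_1', T_2' \in Q_p$ to get a single $T \in Q$ matching $T_1$ on $K$ and $T_2$ on $H$, then splice $T_1'|_K$, $T_2'|_H$, and $T$ elsewhere, and verify that $T$ itself witnesses $T' \in Q_p$ (since $T_1(g)=T(g)$ on $K$ and $T_2(g)=T(g)$ on $H$). Your closing observation --- that one must witness membership in $Q_p$ by a single element of $Q$ rather than splicing $T_1$ and $T_2$ directly --- is precisely the small point that makes the argument go through, and the preservation of disjointness under deletion of tile-translates is handled correctly.
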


\subsection{Enumerating the completion maps}
As in the previous section, fix $r \in \N$. For each tile $T_i$ there
only exist a finite number of local $(X,T_i)$-completion maps (with
parameter $r$), since they map a finite set to a finite set. Hence
there only exists a finite number $\ell$ of global completion maps,
since each corresponds to a choice of local completions maps
$m_1,\ldots,m_n$. Enumerate them by $M^1,M^2,\ldots,M^\ell$.

For each global completion map $M^j \colon A^G \times \cT^G
\to A^G$ we define the corresponding map
\begin{align*}
  N^j \colon A^G \times \cT^G \times \cT^G &\longrightarrow
  A^G
\end{align*}
which, given $(y,T,S)$, applies $M^j$ to the configuration on a
tile-translate $h T_i$ only if $T(h) = S(h)$, and leaves the
configuration unchanged elsewhere. Formally, for each tile-translate
$h T_i$ that is both in $T$ and in $S$ we set
\begin{align*}
  N^j(y,T,S)_{h T_i} = M^j(y,T)_{h T_i},
\end{align*}
and for $g$ outside such tile-translates we set 
\begin{align*}
  [N^j(y,T,S)](g) = y(g),
\end{align*}
$N^j$ applies the same local completion maps that $M^j$ does, but it
only does so for tile-translates on which $T$ and $S$ agree. And
it leaves the configuration elsewhere unchanged.  It is immediate from
this definition that $N^j$ is continuous and shift-equivariant. It also
follows that $N^j(y,T,S) \cdot e_T = y \cdot e_T$, in
analogy to a property of $M^j$.

\subsection{The shifts $X^j$}
For $(x,T) \in X \times Q$, let
\begin{align*}
  x^0 = M^1(x,T).
\end{align*}
Given a $S^1 \in Q_p$, let
\begin{align*}
  x^1 = N^1(x^0,T,S^1).
\end{align*}
Likewise, given an additional $S^2 \in Q_p$, let
\begin{align*}
  x^2 = N^2(x^1,T,S^2).
\end{align*}
repeating the same logic, given $x^{j-1}$, and $S^j$, $1 \leq j
\leq \ell$, let
\begin{align*}
  x^j = N^j(x^{j-1},T,S^j).
\end{align*}
The next claim follows immediately from this definition, and from the
fact that $N^j(\cdot,T,\cdot)$ leaves the configuration unchanged
on the $r$-exterior of $T$.
\begin{claim}
  \label{clm:x-j}
  For all $0 \leq j \leq \ell$ it holds that $x^j \cdot e_T = x
  \cdot e_T$.
\end{claim}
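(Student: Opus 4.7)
The plan is to prove Claim~\ref{clm:x-j} by a straightforward induction on $j$, using two facts that are already established in the excerpt: the second property of Claim~\ref{clm:M-props} for the base case, and the analogous property stated for $N^j$ just before the claim for the inductive step.

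For the base case $j=0$, I would observe that $x^0 = M^1(x,T)$, and since $x \in X$, the hypothesis of Claim~\ref{clm:M-props} is trivially satisfied with $x$ itself playing the role of the witness in $X$ (because $x \cdot e^T = x \cdot e^T$). The second part of Claim~\ref{clm:M-props} then gives $M^1(x,T) \cdot e^T = x \cdot e^T$, i.e., $x^0 \cdot e_T = x \cdot e_T$.

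For the inductive step, I would assume $x^{j-1} \cdot e_T = x \cdot e_T$ and write $x^j = N^j(x^{j-1}, T, S^j)$. The excerpt explicitly records that $N^j(y,T,S) \cdot e_T = y \cdot e_T$ for all inputs (this is the sentence immediately before the claim). Applying this with $y = x^{j-1}$ gives $x^j \cdot e_T = x^{j-1} \cdot e_T$, and composing with the inductive hypothesis yields $x^j \cdot e_T = x \cdot e_T$, closing the induction.

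There is essentially no obstacle here; the claim is a formal consequence of the two ``exterior-preservation'' properties already recorded for $M^j$ and $N^j$. The only thing to double-check is the base case, where one must verify that the conditional hypothesis in Claim~\ref{clm:M-props} is met; but this is immediate since $x$ itself lies in $X$. Thus the entire proof amounts to a one-line induction, which is presumably why the paper says the claim ``follows immediately.''
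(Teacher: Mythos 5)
Your proof is correct and matches the argument the paper intends (the paper simply calls the claim ``immediate'' rather than spelling out the induction). Base case via Claim~\ref{clm:M-props}(2) with $x \in X$ serving as its own witness, inductive step via the exterior-preservation of $N^j$ stated just before the claim --- this is exactly the route. The only minor point worth flagging is that the exterior-preservation of $N^j$ is itself a consequence of the conditional property in Claim~\ref{clm:M-props}(2), so strictly speaking the inductive hypothesis is what licenses applying $M^j$ inside the tile-translates where $T$ and $S^j$ agree; you checked this for the base case and the same check is tacitly carried along by the induction.
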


Denote by $X^j$ the set of all $x^j$ that can be thus constructed, by
applying the preceding maps to some $x \in X$, $T \in Q$, and
$S^1,\ldots,S^j \in Q_p$.
\begin{claim}
  $X^j$ is a shift. Furthermore, if $X$ is in some strongly-stable
  class $\cF$ and $Q$ is strongly irreducible, then $X^j \in \cF$.
\end{claim}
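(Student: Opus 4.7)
The plan is to realize $X^j$ as the image of a compact shift-invariant set under a continuous, shift-equivariant map, and then read off both conclusions from this presentation.

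First I would package the entire recursive construction into a single composite map. Define
$\Phi^j \colon X \times Q \times Q_p^{\,j} \to A^G$ by $\Phi^0(x,T) = M^1(x,T)$ and, for $j \geq 1$,
\begin{align*}
  \Phi^j(x, T, S^1, \ldots, S^j) = N^j\!\bigl(\Phi^{j-1}(x, T, S^1, \ldots, S^{j-1}),\, T,\, S^j\bigr).
\end{align*}
By construction $X^j = \Phi^j(X \times Q \times Q_p^{\,j})$. Each of the maps $M^1$ and $N^1, \ldots, N^j$ has already been observed to be continuous and shift-equivariant, and continuity and shift-equivariance are preserved under composition (regarding the product as a subshift of $(A \times \cT^{j+1})^G$ with the diagonal $G$-action). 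Since $X$, $Q$, and $Q_p$ are all compact, the domain is compact, so $X^j$ is the continuous image of a compact set, hence closed; shift-equivariance combined with the $G$-invariance of the domain gives $G$-invariance of the image. Therefore $X^j$ is a shift.

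For the second assertion I would use the hypotheses on $\cF$ directly. Assume $X \in \cF$ and $Q$ is strongly irreducible; by Proposition~\ref{prop:Q_p-strongly-mixing}, $Q_p$ is also strongly irreducible. Since $\cF$ is strongly-stable, it is closed under taking products with strongly irreducible shifts, so iterating this closure property $j+1$ times yields $X \times Q \times Q_p^{\,j} \in \cF$. Because $\cF$ is also closed under factors and $X^j = \Phi^j(X \times Q \times Q_p^{\,j})$ is a factor (via the continuous shift-equivariant surjection $\Phi^j$ onto its image), we conclude $X^j \in \cF$.

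I do not anticipate a substantive obstacle here; the claim is essentially bookkeeping, and the only point requiring care is confirming that $\Phi^j$ is genuinely shift-equivariant as a map out of the diagonal product, which is immediate from the shift-equivariance of its constituent maps $M^1$ and $N^k$ already established in the text.
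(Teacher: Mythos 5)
Your proof is correct and takes essentially the same approach as the paper: both realize $X^j$ as the image of $X\times Q\times Q_p^{\,j}$ under a continuous, shift-equivariant map and then invoke closure of $\cF$ under products with strongly irreducible shifts and under factors. The only cosmetic difference is that the paper threads $T$ through intermediate shifts $X^j_Q$ via the maps $M^j_*$ and $N^j_*$ and then projects, whereas you unroll the recursion into the single composite map $\Phi^j$; the underlying argument is identical.
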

\begin{proof} 
  Define
  \begin{align*}
    M_*^j \colon A^G \times \cT^G 
    &\longrightarrow
    A^G \times \cT^G\\
    (y,T) &\longmapsto (M^j(y,T),T).
  \end{align*}
  and likewise
  \begin{align*}
    N_*^j \colon A^G \times \cT^G \times \cT^G
    &\longrightarrow
    A^G \times \cT^G\\
    (y,T,S) &\longmapsto (N^j(y,T,S),T).
  \end{align*}
  Let
  \begin{align*}
    X^1_Q = M^1_*(X \times Q) \subset A^G \times Q,
  \end{align*}
  and for $1 < j \leq \ell$ let
  \begin{align*}
    X^j_Q = N^j_*(X^{j-1}_Q \times Q_p).
  \end{align*}
  Then $X^j = N^j(X^{j-1}_Q \times Q_p)$; equivalently, it is the
  projection to the first coordinate of $X^j_Q$, and is therefore a
  shift. Since $Q$ is strongly irreducible then so is $Q_p$, by
  Proposition~\ref{prop:Q_p-strongly-mixing}. Finally, since $X^j$ is
  constructed by taking a series of factors and products involving
  $X$, $Q$ and $Q_p$, it follows from the definition of
  strongly-stable classes that $X^j$ is also in $\cF$.
\end{proof}

\begin{proposition}
  \label{prop:X-j}
  For $0 < j \leq \ell$
  \begin{enumerate}
  \item $h(X^j) \leq h(X^{j-1})+H(Q)+H(Q_p)$.
  \item $X_{B_r} = X^j_{B_r}$.
  \end{enumerate}
\end{proposition}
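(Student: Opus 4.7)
The plan is to prove part (1) by chaining a few elementary entropy inequalities involving the auxiliary shifts $X^j_Q$, and part (2) by a case analysis on whether $B_r$ lies in the $r$-exterior of the quasi-tiling or is contained in a single tile-translate, with the interior case handled by induction on $j$.

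For part (1), observe that $X^{j-1}_Q$ is a subshift of $X^{j-1}\times Q$ by construction, since the second coordinate of any element of $X^{j-1}_Q$ is a $T\in Q$; hence $h(X^{j-1}_Q)\le h(X^{j-1})+h(Q)$. Next, because $X^j_Q = N^j_*(X^{j-1}_Q\times Q_p)$ is a factor image under the continuous, shift-equivariant map $N^j_*$, factors do not increase entropy and so $h(X^j_Q)\le h(X^{j-1}_Q\times Q_p)=h(X^{j-1}_Q)+h(Q_p)$. Finally, the projection to the first coordinate exhibits $X^j$ as a factor of $X^j_Q$, giving $h(X^j)\le h(X^j_Q)$. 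Chaining these three inequalities yields the desired bound.

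For part (2), the containment $X_{B_r}\subseteq X^j_{B_r}$ is the easy direction: given $x\in X$, shift-invariance of $Q$ lets me pick $T\in Q$ whose $r$-exterior contains $B_r$, and then Claim~\ref{clm:x-j} forces $x^j$ to agree with $x$ on $B_r$. For the reverse, let $y=x^j\in X^j$ arise from some data $(x,T,S^1,\ldots,S^j)$. As in the proof of Proposition~\ref{prop:Y-good-approx}, either $B_r$ lies entirely in the $r$-exterior of $T$ (in which case Claim~\ref{clm:x-j} gives $y_{B_r}=x_{B_r}\in X_{B_r}$), or $B_r$ is contained in a single tile-translate $hT_i$ of $T$. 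In the latter case I would prove by induction on $j$ that $x^j_{hT_i}\in X_{hT_i}$: the base case uses the first defining property of the local completion map $m^1_i$ applied to $x_{hT_i}\in X_{hT_i}$, and the inductive step splits according to the definition of $N^j$ -- either the tile-translate $hT_i$ is not common to $T$ and $S^j$, so that $x^j_{hT_i}=x^{j-1}_{hT_i}$ stays in $X_{hT_i}$ by induction, or $m^j_i$ is applied, in which case the inductive hypothesis $x^{j-1}_{hT_i}\in X_{hT_i}$ supplies the compatibility needed to invoke the first property of local completion maps.

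The main obstacle I anticipate is the careful bookkeeping in part (2): one must verify at each step of the induction that the input to the local completion map satisfies its hypothesis, namely the existence of some witness in $X_{hT_i}$ agreeing with the current configuration on the exterior. Fortunately the inductive hypothesis supplies the current configuration itself as the witness, so the verification reduces to an essentially formal check. The entropy inequalities in part (1) are otherwise routine applications of monotonicity under inclusion, the product entropy formula, and the fact that factor maps do not increase entropy.
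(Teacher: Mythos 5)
Your proof is correct and takes essentially the same route as the paper: part (1) chains the same factor and product entropy bounds through the auxiliary shifts $X^j_Q$, and part (2) reprises the case analysis of Proposition~\ref{prop:Y-good-approx}. You make explicit the induction that the paper only sketches with the phrase ``inside each tile-translate applying $N^j$ either does nothing, or else is the same as applying $M^j$,'' correctly observing that the inductive hypothesis $x^{j-1}_{hT_i}\in X_{hT_i}$ itself supplies the witness required by the compatibility hypothesis of the local completion maps.
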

\begin{proof}
  \begin{enumerate}
  \item Recall that $X^j_Q = N^j(X^{j-1}_Q \times Q_p)$. Recall also
    that the projections of $X^{j-1}_Q$ to the first and second
    coordinates are $X^{j-1}$ and $Q$, respectively.  Hence
    \begin{align*}
      h(X^j) &\leq h(X^{j-1}_Q) + h(Q_p)\\
      &\leq h(X^{j-1})+h(Q)+h(Q_p).
    \end{align*}
  \item $X^0_{B_r} = X_{B_r}$ by
    Proposition~\ref{prop:Y-good-approx}. Fix $(x,T) \in X \times
    Q$, and let $x^0=M^1(x,T)$, $x^1=N^1(x^0,T,S^1)$,
    etc. Recall that $x^j \cdot e_T = x \cdot e_T$
    (Claim~\ref{clm:x-j}); the maps $N^j$ do not alter the
    configuration on the $r$-exterior of $T$. Now, inside each
    tile-translate applying $N^j$ either does nothing, or else is the same
    as applying $M^j$. Hence the interiors of the tile-translates are also
    compatible with $X$, and the same argument of
    Proposition~\ref{prop:Y-good-approx} applies in this case too.
  \end{enumerate}
\end{proof}

We would next like to show the following proposition.
\begin{proposition}
  \label{prop:X-ell}
  $X \subseteq X^\ell$.
\end{proposition}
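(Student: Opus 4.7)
The plan is to show that for every $x \in X$, we can realize $x$ as some $x^\ell$ by an appropriate choice of $T \in Q$ and $S^1,\ldots,S^\ell \in Q_p$. The key conceptual observation that makes this possible is that the $\ell$ enumerated global maps $M^1,\ldots,M^\ell$ exhaust every tuple $(m_1,\ldots,m_n)$ of local $(X,T_i)$-completion maps, and that on any given tile-translate $hT_i$ the $N^j$ step depends only on the exterior configuration (property 3 of local completion maps), which is preserved throughout the construction by Claim~\ref{clm:x-j}.

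Fix $x \in X$ and any $T \in Q$. The first step is, for each tile-translate $hT_i$ appearing in $T$, to invoke Claim~\ref{claim:complete-x} (after translating by $h^{-1}$) to obtain a local $(X,T_i)$-completion map $m^*_{h,i}$ that is $(h^{-1}x_{hT_i},T_i)$-compatible. This map has the defining property that, whenever its input agrees with $h^{-1}x$ on the exterior, its output equals $h^{-1}x_{T_i}$. Next, since the $M^j$ enumerate all tuples of local completion maps, for each tile-translate $hT_i$ we may choose an index $j(h,i) \in \{1,\ldots,\ell\}$ such that the $i$-th local map of $M^{j(h,i)}$ equals $m^*_{h,i}$.

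With these indices in hand, I would define $S^j$ to be the quasi-tiling including exactly those tile-translates $hT_i$ of $T$ with $j(h,i) = j$, and $S^j(g) = \emptyset$ elsewhere. Since each $S^j$ is obtained from $T \in Q$ by deleting tile-translates, $S^j \in Q_p$. I would then verify $x^\ell = x$ by inspecting the two regions separately. On the $r$-exterior of $T$, Claim~\ref{clm:x-j} gives $x^\ell \cdot e_T = x \cdot e_T$ directly. On the $r$-interior, each $g$ lies in a unique tile-translate $hT_i$, and the value of $x^\ell$ there is determined by the largest index $k$ for which the $N^k$ step (or $M^1$ if no such $k$ exists in $\{2,\ldots,\ell\}$) acts on $hT_i$; by construction this index is exactly $j(h,i)$, so the value equals $m^*_{h,i}$ applied to the exterior of $x$, which by compatibility returns $x_{hT_i}$.

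The main subtle point — which I expect to be the only real obstacle — is verifying that the relevant $N^j$ step at index $j(h,i)$ really does produce $m^*_{h,i}(x_{hT_i}, e^T_{hT_i})$, even though its formal input is $x^{j(h,i)-1}$ rather than $x$. This is where property 3 of local completion maps (dependence only on the exterior) combined with the iterative preservation of the exterior (Claim~\ref{clm:x-j}) does the work: at every intermediate stage $x^{k-1}$ agrees with $x$ on the $r$-exterior of $T$, so applying $m^{j(h,i)}_i$ to $x^{j(h,i)-1}$ on the translated tile $hT_i$ produces the same result as applying it to $x$. Once this is unwound carefully, every intermediate configuration collapses to the same exterior data as $x$, and the conclusion $x^\ell = x$ follows immediately.
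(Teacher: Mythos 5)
Your proof is correct and follows essentially the same route as the paper: for each tile-translate $hT_i$ you pick a local completion map that is $(x,hT_i)$-compatible via Claim~\ref{claim:complete-x}, use the fact that $M^1,\ldots,M^\ell$ exhaust all tuples of local maps to assign an index $j(h,i)$, build $S^j$ from the tile-translates assigned to $j$, and then use Claim~\ref{clm:x-j} together with property (3) of local completion maps to show that $x^\ell$ agrees with $x$ on both the exterior and each tile-translate. The only cosmetic difference is that the paper packages the assignment $h \mapsto j(h,i)$ into the sets $C^j_{x,T}$ and argues "unchanged for $j' > j$" where you phrase it as "the largest index acting on $hT_i$," but the content is identical.
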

\begin{proof}
  To prove this proposition, we construct for each $x \in X$ a
  sequence of quasi-tilings $\{S^j\}_{j=1}^\ell$ such that the
  associated $x^\ell$ is equal to $x$.

  To this end, fix some $x \in X$ and $T \in Q$.  Let $h T_i$ be a
  $T$-tile-translate. Then by Claim~\ref{claim:complete-x} there exists
  a local $(X,h T_i)$-completion map $m_i$ that is $(x,h
  T_i)$-compatible; that is, it completes to $x_{h T_i}$ any
  configuration on $h T_i$ that is compatible with $x$ on the exterior
  of $h T_i$.

  Now, there will an $N^j$ that will use $m_i$ to complete translates of
  $T_i$. Hence for such an $N^j$ it will hold that
  \begin{align}
    \label{eq:compatible}
    M^j(y,T,T')_{h T_i} = x_{h T_i}
  \end{align}
  for all $T'$ such that $T'(h)=T(h)=i$ and for all $y \in
  A^G$ such that $(y \cdot e_{T}) = (x \cdot e_{T})$. In this
  case we say that $N^j$ is {\em $(x,h T_i)$-compatible}.

  Let $C^j_{x,T} \subset G$ be the set of $T$-tile-translate
  locations $h$ for which $N^j$ is $(x,h T(h))$-compatible:
  \begin{align*}
    C^j_{x,T} = \left\{h \in G\,:\, N^j\mbox{ is
        $(x,h T(h))$-compatible}\right\}.
  \end{align*}
  Then for each $T$-tile-translate $h T_i$ there is some $1 \leq j \leq \ell$
  such that $h \in C^j_{x,T}$. We furthermore choose the sets
  $C^j_{x,T}$ so that each such $h$ appears in exactly one set.

  Let $S^j$ be given by
  \begin{align*}
    S^j(h) =
    \begin{cases}
      T(h)&\mbox{ if }h \in C^j_{x,T}\\
      0&\mbox{otherwise}
    \end{cases}.
  \end{align*}
  Clearly $S^j \in Q_p$. It follows immediately from the definition
  of $C^j_{x,T}$ that if $h \in C^j_{x,T}$ then
  \begin{align*}
    N^j(y,T,S^j)_{h T(h)} = x_{h T(h)}.
  \end{align*}
  for all $y \in A^G$ such that $y \cdot e_{T} = x \cdot e_{T}$.

  Following our construction of the shifts $X^j$, let
  \begin{align*}
    x^0 &= M^1(x,T)\\
    x^1 &= N^1(x^0,T,S^1)\\
    x^2 &= N^2(x^1,T,S^2)\\
    \vdots\\
    x^\ell &= N^\ell(x^{\ell-1},T,S^\ell).
  \end{align*}
  By the definition of the configurations $S^j$, for each
  $T$-tile-translate $h T_i$ there will be a $j$ such that $S^j(h)
  = T(h)$. Hence for that $j$ it will holds that
  \begin{align*}
    x^j_{h T_i} = N^j(x^{j-1},T,S^j)_{h T_i} = x_{h T_i}.
  \end{align*}
  For all $j'>j$ the configuration on this tile will remain unchanged,
  and so $x^\ell_{h T_i}=x_{h T_i}$. Since this holds for all
  tile-translates, and since $x^\ell$ agrees with $x$ on the $r$-exterior
  of $T$ (Claim~\ref{clm:x-j}), it follows that $x^\ell = x$. Hence
  $x \in X^\ell$.
\end{proof}

We are now ready to prove Proposition~\ref{prop:main-technical}.
\begin{proof}[Proof of Proposition~\ref{prop:main-technical}]
  Let $X \in \shifts^\cF$, $\eps > 0$ and $r \in \N$. Define maps
  $M^j$ as above, using the parameter $r$.

  Let $\cT$ be a tile set, and let $Q$ be a strongly irreducible
  $\cT$-quasi-tiling shift such that $h(Q) + h(Q_p) < \eps$ and such
  that if $X^0=M^1(X \times Q)$ then $h(X^0) < \eps$. The existence of
  such a quasi-tiling shift is guaranteed by
  Theorem~\ref{cor:quasi-tilings} and
  Propositions~\ref{prop:low-entropy-approx}
  and~\ref{prop:Q-p-low-ent}, for $r$ large enough.

  Let $X^1,X^2,\ldots,X^\ell$ be the shifts defined above. They are in
  $\shifts^\cF$, since they are constructed from $X$ and $Q$ by taking
  products and factors only.

  By Proposition~\ref{prop:X-ell} $X \subseteq X^\ell$, and so
  $h(X^\ell) \geq h(X)$. On the other hand, by the first part of
  Proposition~\ref{prop:X-j}, $h(X^j) - h(X^{j-1}) \leq \eps$. It
  follows that $h(X^j) \in [c,c+\eps)$ for some $1 \leq j \leq
  \ell$. Finally, by the second part of Proposition~\ref{prop:X-j},
  $X^j_{B_r} = X_{B_r}$.
\end{proof}

The proof of our main theorem is now straightforward.
\begin{proof}[Proof of Theorem~\ref{thm:main}]
  We first prove that for all $\epsilon$ and all $c > 0$ the set of shifts
  with entropy in the range $[c,c+\epsilon)$ is dense in the set of
  shifts with entropy greater than or equal to $c$.

  Let $X$ be a shift with entropy $h(X)\ge c$ and fix $\eps$.  Let
  $Y_n$ be a subshift that agrees with $X$ on $B_r$ and has entropy in
  $[c,c+\eps)$; the existence of these subshifts is guaranteed by
  Proposition~\ref{prop:main-technical}. Since the sets $B_r$ exhaust
  $G$, it follows that $\lim_n Y_n = X$.  Thus the set of shifts with
  entropy in $[c,c+\eps)$ is dense in $\shifts_{\geq c}^\cF$.

  By Proposition~\ref{prop:semi-cont-ent} the entropy function is
  upper semi-continuous. Hence the set of shifts with entropy in the
  range $[0,c+\eps)$ is open, for all $c+\eps>0$. Thus it follows that
  the set of shifts with entropy in the range $[c,c+\eps)$ is open in
  $\shifts_{\geq c}^\cF$.

  Since $\shifts_c^\cF$ is the intersection of the sets of shifts with
  entropy in the range $[c,c+1/n)$ (for $n$ in $\N$), and since, by
  the above, each of these is open and dense in $\shifts_{\geq c}^\cF$,
  then by the Baire Category Theorem $\shifts_c^\cF$ is comeagre in
  $\shifts_{\geq c}^\cF$.
\end{proof}

\section{The topology of $\shifts_{\geq c}$}
\label{sec:topology}

In this section we prove Theorem~\ref{thm:isolated}.
\isolated*

We first note the following fact.
\begin{proposition}
  If $X$ is a strongly irreducible shift of finite type with $h(X)=c$
  then $X$ is an isolated point in $\shifts_{\geq c}$.
\end{proposition}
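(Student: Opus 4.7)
The plan is to combine two distinct ingredients: the shift-of-finite-type structure of $X$ ensures that every nearby shift is \emph{contained in} $X$, while strong irreducibility ensures that every proper subshift of $X$ has strictly smaller entropy. Together they pin down $X$ as the unique element of $\shifts_{\geq c}$ in some small neighborhood of itself.

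For the first ingredient, I would use the defining window of the SFT. Since $X$ is of finite type, there is a finite $K \subset G$ such that $X = \{x \in A^G : (g^{-1}x)_K \in X_K \text{ for every } g \in G\}$. The set $U = \{Y \in \shifts : Y_K = X_K\}$ is a clopen neighborhood of $X$ in the Hausdorff topology, and for any $Y \in U$ and any $y \in Y$, shift-invariance of $Y$ gives $(g^{-1}y)_K \in Y_K = X_K$ for every $g \in G$, whence $y \in X$; hence $Y \subseteq X$.

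The core of the argument is the second ingredient: the \emph{entropy-minimality} of strongly irreducible shifts, namely that any proper subshift $Y \subsetneq X$ satisfies $h(Y) < h(X)$. This is a standard result, established by a packing argument --- pick $x \in X \setminus Y$ and a finite $F \supseteq K$ with $x_F \notin Y_F$, chosen large enough that $|X_F| \geq 2$; let $r$ be the strong-irreducibility constant of $X$; and for each sufficiently invariant F{\o}lner set $F_n$, pack at least $\alpha|F_n|$ translates $g_1 F, \ldots, g_{m_n} F$ inside $F_n$, pairwise separated by distance greater than $r$. Strong irreducibility lets one freely combine local patterns, giving $|X_{\cup_i g_i F}| = |X_F|^{m_n}$, while in $Y$ the pattern $g_i x_F$ is forbidden on each $g_i F$, forcing $|Y_{\cup_i g_i F}| \leq (|X_F|-1)^{m_n}$. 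Taking logarithms, dividing by $|F_n|$, and passing to the limit yields $h(X) - h(Y) \geq \alpha \log \frac{|X_F|}{|X_F|-1} > 0$. This is the main obstacle; I would either prove it inline as above or cite it from the literature (e.g.\ Ceccherini-Silberstein and Coornaert).

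Putting the two ingredients together: for any $Y \in U \cap \shifts_{\geq c}$ we have $Y \subseteq X$ by the first step, and $h(Y) \geq c = h(X)$ by hypothesis, so entropy-minimality forces $Y = X$. Thus $U \cap \shifts_{\geq c} = \{X\}$, and $X$ is isolated in $\shifts_{\geq c}$.
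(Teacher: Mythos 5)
Your proof is correct and follows essentially the same two-ingredient strategy as the paper: the finite-type property forces every shift close enough to $X$ to be a subshift of $X$, and strong irreducibility gives entropy-minimality (the paper cites this from Ceccherini-Silberstein and Coornaert, as you also suggest doing). The only substantive difference is that the paper phrases the first step via convergent sequences rather than a clopen neighborhood, and does not attempt the inline packing sketch, which in your version would need some care to pass from a count on $\bigcup_i g_i F$ to a bound on $h(Y)$ over the F{\o}lner sets $F_n$; citing the literature as you propose is the safer route.
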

\begin{proof}
  Let $\lim_n X^n =X$ with $X^n\neq X$ for all $n$. Since $X$ is of
  finite type, all but finitely many of the $X^n$ have to be proper
  subshifts of $X$. Since $X$ is strongly irreducible, each of its
  proper subshifts have entropy strictly less than
  $c$~\cite{ceccherini2012myhill}*{Proposition 4.2}. Hence all but
  finitely many of the $X^n$ are outside $\shifts_{\geq c}$, and $X$ is an
  isolated point in $\shifts_{\geq c}$.
\end{proof}

In light of this proposition, we prove Theorem~\ref{thm:isolated} by
finding a countable family of strongly irreducible shifts of finite
type, whose entropies form a dense set in $[0,\log|A|]$. To this end,
we employ a strategy similar to the one used in
Section~\ref{sec:fixed-ent}: for every $\eps > 0$ we construct a
sequence of shift $X^{1},X^{2},\ldots,X^{\ell}$, where
\begin{enumerate}
\item Each $X^{j}$ is a strongly irreducible shift of finite type.
\item $X^{j-1} \subset X^{j}$.
\item $h(X^{0}) = 0$.
\item $h(X^{j}) - h(X^{j-1}) \leq \delta(\eps)$, where $\lim_{\eps \to 0}\delta(\eps)=0$.
\item $h(X^{\ell}) = \log|A|$.
\end{enumerate}
It follows that the set of entropies $\{h(X^{j})\}$ is dense in $[0,\log|A|]$.

Our proof proceeds as follows: for every $\eps$ we choose a
sufficiently good tile set $\cR$. The shift $X^{j}$ will be the shift
which is supported on at most an $j / |R_1|$ proportion of any
$\cR$-tile-translate, where $R_1$ is the largest tile in $\cR$.

This is clearly a strongly irreducible shift of finite type. It is
immediate that $h(X^{|R_1|}) = \log|A|$, and we
show that $h(X^{j})$ is low when $j$ is low.

To show that $h(X^{j}) - h(X^{j-1}) \leq \eps$ for large $j$ we again
use a strategy similar to that of Section~\ref{sec:fixed-ent}: We show
that $X^{j}$ is a subshift of a factor of $X^{j-1} \times Y$,
where $Y$ is some shift of entropy at most $\delta(\eps)$. This implies
that $h(X^{j}) \leq h(X^{j-1} \times Q) = h(X^{j-1})+\delta(\eps)$.

\subsection{Compatible quasi-tilings}
Before defining the shifts $X^j$, we take a short intermission to
define a technical tool which will be useful to that end, and state a
simple claim regarding it.
\begin{definition}
  \label{def:compatible}
  Let $\cT = (T_1,\ldots,T_n)$ and $\cR = (R_1,\ldots,R_m)$ be tile sets,
  and fix $\eps > 0$. A $\cT$-quasi-tiling $T$ is said to be
  $(\cR,\eps)$-compatible if, for any tile-translate $K = h R_i$ it holds that
  $e(T,K) \leq \eps|K|$.
\end{definition}

Given tile sets $\cT$ and $\cR$, and given an $\eps>0$, the set of
$(\cR,\eps)$-compatible $\cT$-quasi-tiling is a shift. In fact, it is
a shift of finite type. The next claim shows that it is non-empty, if
$\cR$ is sufficiently good. It follows immediately from the definitions.
\begin{claim}
  \label{clm:compatible}
  Let $\cT$ be an $\half \eps$-good tile set, and let $\cR$ be a tile
  set with $\rho_1R_i$ sufficiently small.  Then there exists a
  $\cT$-quasi-tiling that is $(\cR,\eps)$-compatible.
\end{claim}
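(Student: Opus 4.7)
The plan is to unwind the definitions and observe that ``$\cR$ with $\rho_1 R_i$ sufficiently small'' is exactly the hypothesis needed to let each tile-translate $hR_i$ play the role of a ``sufficiently invariant $F$'' in the definition of error density of the quasi-tiling shift guaranteed by $\half\eps$-goodness.

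First, since $\cT$ is $\half\eps$-good, I get a $\cT$-quasi-tiling shift $Q \subseteq \cT^G$ whose error density is at most $\half\eps$. Unpacking the definition of error density, this gives me some $\delta > 0$ (depending on $\cT$, $Q$, and $\eps$) such that for every $T \in Q$ and every $(1,\delta)$-invariant finite set $F \subset G$,
\[
  e(T,F) \leq \tfrac{\eps}{2}\,|F|.
\]
This $\delta$ is what I will ask of $\cR$: I choose $\cR = (R_1,\ldots,R_m)$ so that $\rho_1(R_i) \leq \delta$ for each $i$, i.e.\ each $R_i$ is $(1,\delta)$-invariant. (This is what ``$\rho_1 R_i$ sufficiently small'' means in the statement.)

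The key observation is that $\rho_1$ is translation-invariant: since $d$ is left-invariant, one checks directly from the definition of $\partial_r$ that $\partial_r(hR_i) = h\,\partial_r R_i$, so $|\partial_r(hR_i)| = |\partial_r R_i|$ and $|hR_i| = |R_i|$, hence $\rho_1(hR_i) = \rho_1(R_i) \leq \delta$. Thus every tile-translate $K = hR_i$ is itself $(1,\delta)$-invariant.

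Finally, since $Q$ is non-empty (it is a shift given by Theorem~\ref{thm:quasi-tiling-shifts-intro}), I pick any $T \in Q$. Applied to $F = K = hR_i$, the error density bound yields
\[
  e(T,K) \leq \tfrac{\eps}{2}\,|K| \leq \eps|K|
\]
for every tile-translate $K$ of any $R_i \in \cR$. By Definition~\ref{def:compatible} this $T$ is $(\cR,\eps)$-compatible, and the claim follows. There is no real obstacle here; the only thing to verify is the translation-invariance of $\rho_1$, which is immediate from left-invariance of the word metric, and the rest is a direct quotation of definitions.
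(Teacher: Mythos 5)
Your proof is correct and takes precisely the direct route the paper has in mind; the paper gives no proof, stating only that the claim ``follows immediately from the definitions.'' The one point worth spelling out is exactly the one you did spell out, namely that $\rho_1$ is invariant under left translation so that each tile-translate $hR_i$ inherits the $(1,\delta)$-invariance of $R_i$, after which the error-density bound for $Q$ applies verbatim.
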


\subsection{Constructing the shifts $X^{j}$}
Fix $\eps > 0$. Let $\cT$ be an $\half \eps$-good tile set, and let
$\cR$ be an $\eps$-good tile set such that $\rho_1R_i$ is small enough
so that, by Claim~\ref{clm:compatible}, there exist
$(R,\eps)$-compatible $\cT$-quasi-tilings. Furthermore, let each tile
in $\cR$ be of size at least $1/\eps$, and be
$(r(\cT),\eps)$-invariant. Let $Q$ be the $\cT$-quasi-tiling shift of
tilings with these properties.

Let $R_1$ be a largest tile in $\cR$. For every $0 \leq j \leq |R_1|$
let $X^j \subseteq A^G$ be the shift of all $x \in A^G$ whose
projections to any $\cR$-tile-translate $h R_i$, $x_{h R_i}$, have
support of size at most $j |R_i|/|R_1|$.

We next show that $X^j$ has low entropy for low values of $j$.
\begin{claim}
  \label{clm:X-j-low-ent}
  $h(X^j) \leq \frac{j}{|R_1|}\log\frac{3|A||R_1|}{j}+\eps\log|A|$.
\end{claim}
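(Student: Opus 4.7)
The plan is to pick an $\cR$-quasi-tiling (using the $\eps$-goodness of $\cR$) and use it to decompose any sufficiently invariant Følner set $F$ into a disjoint union of interior tile-translates plus a small set of errors. Counting configurations on each piece separately then gives the desired entropy bound.

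In detail: since $\cR$ is $\eps$-good, Theorem~\ref{thm:quasi-tiling-shifts-intro} provides an $\cR$-quasi-tiling shift of error density at most $\eps$; pick any $T$ in this shift. For a sufficiently invariant Følner set $F$, call a tile-translate $h R_i$ of $T$ \emph{interior} if $h R_i \subseteq F$, and let $E \subseteq F$ be the union of the $F$-points not covered by interior tile-translates. Combining the error-density bound with the $r(\cR)$-invariance of $F$ gives $|E| \leq (\eps + o(1))|F|$, where $o(1) \to 0$ as $F$ becomes more invariant along the Følner sequence.

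For any $x \in X^j$ and any interior tile-translate $h R_i$, the restriction $x_{h R_i}$ has support of size at most $j|R_i|/|R_1|$, so the number of possible configurations on it is at most
\begin{align*}
\binom{|R_i|}{\lfloor j|R_i|/|R_1|\rfloor}\,|A|^{j|R_i|/|R_1|} \;\leq\; \left(\frac{3|A||R_1|}{j}\right)^{j|R_i|/|R_1|}.
\end{align*}
Multiplying over the disjoint interior tile-translates (whose sizes sum to at most $|F|$) and using the trivial bound $|A|^{|E|}$ on the remainder yields
\begin{align*}
|X^j_F| \;\leq\; \left(\frac{3|A||R_1|}{j}\right)^{j|F|/|R_1|}|A|^{(\eps+o(1))|F|}.
\end{align*}
Taking logarithms, dividing by $|F|$, and passing to the Følner limit produces the claimed inequality.

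The only mildly delicate point is the $o(1)$ accounting of tile-translates of $T$ that straddle $\partial_{r(\cR)} F$; since each $R_i$ has bounded diameter and $F$ is taken sufficiently invariant, the combined volume of such straddling tile-translates inside $F$ is $o(|F|)$, so it is safely absorbed into the $\eps \log|A|$ term in the limit.
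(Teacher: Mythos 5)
Your argument is correct and is essentially the paper's own proof unfolded: the per-tile count via the binomial bound is Lemma~\ref{lem:combo-finite-set} (with $\alpha = j/|R_1|$), and the step that distributes the tile bounds over a Følner set via a quasi-tiling and absorbs the uncovered/straddling remainder into the $\eps\log|A|$ term is exactly Proposition~\ref{prop:tiling-entropy-bound}. The paper simply cites those two results instead of re-deriving them, so the two proofs coincide in substance.
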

\begin{proof}
  Denote $\alpha = j/|R_1|$, so that the support of $X^j_{R_i}$ is of
  size at most $\alpha|R_i|$. It follows from
  Lemma~\ref{lem:combo-finite-set} that $X^j_{R_i}$ is of size at most
  $(3|A|/\alpha)^{\alpha |R_i|}$. We can now apply
  Proposition~\ref{prop:tiling-entropy-bound} to $X^j$ and $\cR$, with
  $p = (3|A|/\alpha)^\alpha$. This yields
  \begin{align*}
    h(X^j) \leq \alpha\log\frac{3|A|}{\alpha}+\log(|A|)\eps.
  \end{align*}
\end{proof}

\subsection{Constructing the shift $Y$}
Fix $\eps > 0$.  Recall that $Q$ is the $\cT$-quasi-shift of
$(R,\eps)$-compatible $\cT$-quasi-tilings.

Let $Y^0 \subset Q \times A^G$ be a shift defined as follows: if
$(T,y) \in Y$ then for every tile-translate $h T_i$ in $T$ it holds
that $y(g) \neq 0$ for at most one $g \in h T_i$. For $g$ outside the
tile-translates $y(g)$ can take any value.

Let $Y \subset A^G$ be the projection of $Y^0$ on its second
coordinate. We can bound the size of the support of $Y_F$ by the
number of tile-translates fully contained in a finite set $F$, plus
the size of $F$'s boundary $\partial_{r(\cR)}F$. Since each tile is of
size at least $1/\eps$, the total number of tiles is at most
$\eps|F|$. If we choose $F$ so that $\rho_{r(\cR)}F$ is small enough,
then the size of $\partial_{r(\cR)}F$ will be at most $\eps|F|$. It
follows that the support of $Y_F$ is of size at most
$2\eps|F|$. Hence, by Lemma~\ref{lem:combo-low-ent} we have that
\begin{align}
  \label{eq:Y-ent}
  h(Y) \leq 2\eps\log\frac{|A|}{\eps}.
\end{align}

\subsection{Realizing entropies}
Let $\varphi \colon A^G \times A^G \to A^G$ be given by
\begin{align*}
  [\varphi(x,y)](g) =
  \begin{cases}
    x(g)& \mbox{if }x(g) \neq 0\\
    y(g)& \mbox{otherwise}.
  \end{cases}
\end{align*}

\begin{claim}
  If $j / |R_1| > 3\eps$ then $X^{j}$ is a subshift of
  $\varphi(X^{j-1} \times Y)$.
\end{claim}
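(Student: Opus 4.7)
The plan is to show that every $x \in X^j$ admits a decomposition $x = \varphi(x',y)$ with $x' \in X^{j-1}$ and $y \in Y$. Since $\varphi(x',y)(g)$ equals $x'(g)$ when $x'(g) \neq 0$ and $y(g)$ otherwise, such a decomposition is obtained by choosing a set $E \subseteq G$ of ``extracted'' positions (all at support points of $x$), and setting $x'$ to $0$ on $E$ and to $x$ off $E$, while setting $y$ to $x$ on $E$ and to $0$ off $E$. This makes $\varphi(x',y) = x$ automatic.

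For $x'$ to lie in $X^{j-1}$, the support of $x$ on each $\cR$-tile-translate $hR_i$ must be reduced from at most $\lfloor j|R_i|/|R_1|\rfloor$ down to at most $\lfloor (j-1)|R_i|/|R_1|\rfloor$; since $|R_i| \leq |R_1|$, this requires at most one extraction per $\cR$-tile-translate. For $y$ to lie in $Y$, we need some $T \in Q$ with $(T,y) \in Y^0$, i.e., at most one element of $E$ per $\cT$-tile-translate of $T$. I would fix $T$ to be any element of $Q$, which is $(\cR, \eps)$-compatible by construction.

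The key step is to locate, for each $\cR$-tile-translate $hR_i$ requiring reduction, a support point of $x$ in the \emph{clean interior} of $hR_i$, defined as the union of $\cT$-tile-translates of $T$ contained entirely in $hR_i$. The $(\cR,\eps)$-compatibility of $T$ bounds the $T$-errors in $hR_i$ by $\eps|R_i|$, while the $(r(\cT),\eps)$-invariance of $\cR$ bounds the portion of $hR_i$ lying in boundary-crossing $\cT$-tile-translates by $O(\eps)|R_i|$. Hence the clean interior has size at least $(1 - C\eps)|R_i|$, and its support points number at least $s_i - C\eps|R_i|$, where $s_i$ is the support size of $x$ on $hR_i$. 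When reduction is needed, $s_i > (j-1)|R_i|/|R_1|$; combined with $|R_i| \geq 1/\eps$ and the hypothesis $j/|R_1| > 3\eps$, this forces at least one support point in the clean interior, which we take as our extraction. Each extraction lies in a $\cT$-tile-translate sitting inside a single $\cR$-tile-translate, and since at most one extraction is made per $\cR$-tile-translate, each $\cT$-tile-translate receives at most one, giving $(T, y) \in Y^0$.

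The main obstacle is the combinatorial lower bound just described: verifying that the clean interior contains a support point of $x$ precisely when reduction is required. The hypothesis $j/|R_1| > 3\eps$ is calibrated to absorb the $O(\eps)|R_i|$ boundary losses after applying the crude bound $1/|R_i| \leq \eps$. The remaining checks that $x' \in X^{j-1}$, that $y \in Y$, and that $\varphi(x',y) = x$ are immediate from the construction.
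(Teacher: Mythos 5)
Your decomposition of $x$ into $x'$ and $y$ via an extraction set $E$, the definition of the ``clean interior,'' and the counting that shows each $\cR$-tile-translate needing reduction has a support point inside its clean interior are all correct and match the paper's ideas. The gap is in the step where you conclude $y \in Y$. You choose $E$ to contain (at most) one point per $\cR$-tile-translate needing reduction, and then argue that ``since at most one extraction is made per $\cR$-tile-translate, each $\cT$-tile-translate receives at most one.'' This inference relies on each $\cT$-tile-translate being contained in \emph{only one} $\cR$-tile-translate, which is false: the defining constraint of $X^j$ quantifies over \emph{all} translates $h R_i$ with $h \in G$, and these overlap heavily. A single $\cT$-tile-translate of $T$ will lie in many $\cR$-tile-translates, and if several of these need reduction, nothing in your scheme prevents several extractions from landing in that same $\cT$-tile-translate, violating $(T,y)\in Y^0$.

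The paper sidesteps this by running the extraction the other way around: it removes at most one support point from \emph{each} $\cT$-tile-translate of $T$ (and also zeroes $x^{j-1}$ outside the tile-translates), so the ``at most one per $\cT$-tile-translate'' constraint on $y$ holds by construction. It then verifies $x^{j-1} \in X^{j-1}$ by checking, for an arbitrary $\cR$-tile-translate $h R_i$, that whenever the support of $x^j_{hR_i}$ saturates the $X^j$ bound, at least one support point sits in a $\cT$-tile-translate fully inside $h R_i$ and hence gets removed; the counting you carried out is exactly what is needed here. So the fix for your proof is to index the extraction set by $\cT$-tile-translates rather than by $\cR$-tile-translates: make $Y$-membership automatic and do the per-$\cR$-tile-translate check on $x'$ instead.
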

\begin{proof}
  Let $x^j \in X^{j}$. Choose a $T \in Q$. Let $x^{j-1} \in A^G$
  vanish outside the $T$ tile-translates. Inside each $T$-tile-translate
  $h T_i$ let $x^j$ coincide with $x^{j-1}$, except for at a single
  point $g \in h T_i$ in which $x^j(g) \neq 0$ (assuming one exists),
  where we set $x^{j-1}(g)=0$. Let $L$ be the set of these locations $g$.

  Let $y \in A^G$ be given by $y(g)=x^j(g)$ outside the
  $T$-tile-translates, and let $y(g)=0$ inside the tile-translates, except
  for $g \in L$, where we set $y(g) = x^j(g)$. Hence $y \in Y$, since
  $L$ intersects each $T$-tile-translate in at most one element. It is
  immediate that $\varphi(x^{j-1},y)=x^j$. The claim will thus be
  proved if we show that $x^{j-1}\in X^{j-1}$.

  To see that $x^{j-1}\in X^{j-1}$, we fix a an $\cR$-tile-translate
  $h R_i$, and consider two cases.  First, if the support of $x^j_{h
    R_i}$ is of size less than $j|R_i|/|R_1|$ (i.e., the maximum it
  can be in $X^j$), then $x^j_{h R_i} \in X^{j-1}_{h R_i}$, and hence
  $x^{j-1}_{h R_i} \in X^{j-1}_{h R_i}$, since its support is at most
  that of $x^j_{h R_i}$.

  Second, consider the case that the support of of $x^j_{h R_i}$ is of
  size $j|R_i|/|R_1|$, and so it is larger than $3\eps|R_i|$. Each
  $\cR$-tile-translate $h R_i$ is well covered by $T$, since $T$ is
  $(\cR,\eps)$-compatible; in particular, $e(T, h R_i) \leq \eps
  |R_i|$. Since $\rho_{r(\cT)}(R_i)\le \eps$, it follows that the
  union of the $T$-tile-translates that are {\em fully contained} in
  $h R_i$ is at least of size $(1-2\eps)|R_i|$. Since the support of
  $x^j_{h R_i}$ is of size $3\eps|R_i|$ it follows that at least one
  $T$-tile-translate has an element $g$ such that $x^j(g) \neq
  0$. This will be removed in $x^{j-1}$, and so the size of the
  support of $x^{j-1}_{h R_i}$ is at most $j|R_i|/|R_1| - 1 \leq
  (j-1)|R_i|/|R_1|$. Hence $x^{j-1} \in X^{j-1}$.
\end{proof}

It follows that if $j / |R_1| > 3\eps$ then
\begin{align*}
  h(X^j)-h(X^{j-1}) \leq h(Y) \leq 2\eps\log\frac{|A|}{\eps},
\end{align*}
where the second inequality is~\ref{eq:Y-ent}.

It follows from Claim~\ref{clm:X-j-low-ent} that if $j / |R_1| \leq
3\eps$ then
\begin{align*}
  h(X^j) - h(X^{j-1}) \leq h(X^j) \leq
  3\eps\log\frac{|A|}{\eps}+\log(|A|)\eps.
\end{align*}
Hence, if we set
\begin{align*}
  \delta(\eps) =3\eps\log\frac{|A|}{\eps}+\log(|A|)\eps
\end{align*}
then we have shown that for all $0 \leq j\leq \ell$
\begin{align*}
  h(X^j) - h(X^{j-1}) \leq \delta(\eps),
\end{align*}
thus proving Theorem~\ref{thm:isolated}.

\appendix
\section{Combinatorial lemmata for bounding entropy}

In this appendix we prove two combinatorial lemmata that are useful
for bounding entropy.
\begin{lemma}
  \label{lem:combo-finite-set}
  Let $X \in A^G$ be a shift, and let $0$ be a distinguished element
  of $A$. Suppose that for some finite $F \subset G$ it holds that for
  every $x \in X$ the projection $x_F$ vanishes (i.e., equals $0$) on
  all but at most $\eps|F|$ of the elements of $F$. Then the size of
  the projection of $X$ on $F$ can be bounded by
  \begin{align*}
    |X_F| \leq \left(\frac{3|A|}{\eps}\right)^{\eps|F|}.
  \end{align*}
\end{lemma}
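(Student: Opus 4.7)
The statement is a purely combinatorial counting bound, independent of the shift structure: every element of $X_F$ is a function $f \colon F \to A$ whose support (the set where $f \neq 0$) has size at most $\eps|F|$, so the task reduces to bounding the number of such functions.

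My plan is to count by first choosing the support and then assigning nonzero values. If the support has size $k$, there are $\binom{|F|}{k}$ choices for it and at most $|A|^k$ ways to fill in the values (generously including $0$ itself in the alphabet, which only makes the bound weaker). Summing over $k = 0, 1, \ldots, \lfloor \eps|F|\rfloor$ gives
\[
|X_F| \leq \sum_{k=0}^{\lfloor\eps|F|\rfloor} \binom{|F|}{k}\, |A|^k \leq |A|^{\eps|F|} \sum_{k=0}^{\lfloor\eps|F|\rfloor}\binom{|F|}{k}.
\]
Pulling out $|A|^{\eps|F|}$ is legitimate since $|A| \geq 1$ and each term has $|A|^k \leq |A|^{\eps|F|}$.

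The main step is then the standard binomial tail bound $\sum_{k=0}^{\lfloor \eps n \rfloor} \binom{n}{k} \leq (e/\eps)^{\eps n}$, valid when $\eps \leq 1/2$; this follows from the classical estimate $\binom{n}{k} \leq (en/k)^k$ applied at the largest term and controlling the sum. Combining with the previous display yields
\[
|X_F| \leq \left(\frac{e|A|}{\eps}\right)^{\eps|F|} \leq \left(\frac{3|A|}{\eps}\right)^{\eps|F|},
\]
using $e < 3$. For the range $\eps \geq 1/2$, the bound $(3|A|/\eps)^{\eps|F|}$ already dominates the trivial count $|A|^{|F|}$ (since $6|A| \geq |A|^2$ would need $|A| \leq 6$, but more directly one can just verify $(3|A|/\eps)^{\eps} \geq |A|$ pointwise for $\eps \in [1/2,1]$ and $|A|\geq 2$), so no delicate analysis is needed there. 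The only genuine obstacle is choosing the right tail bound to absorb the $\binom{|F|}{k}|A|^k$ sum cleanly into a single $(3|A|/\eps)^{\eps|F|}$ factor; once that is done the proof is essentially immediate.
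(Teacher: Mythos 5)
Your approach is correct in spirit but takes a genuinely different route from the paper, and it has one small slip. The paper avoids summing over support sizes altogether: given $f \in X_F$ with $|\supp(f)| \leq \eps|F|$, it first chooses a superset $H \subseteq F$ of size \emph{exactly} $\eps|F|$ containing the support, then counts functions supported on $H$, yielding $|X_F| \leq \binom{|F|}{\eps|F|}|A|^{\eps|F|}$ and then needing only the single-coefficient bound $\binom{n}{k} \leq (en/k)^k$, which holds for all $k \leq n$ with no restriction on $\eps$. You instead sum $\binom{|F|}{k}|A|^k$ over $k \leq \eps|F|$, which requires the cumulative tail bound $\sum_{k\leq m}\binom{n}{k}\leq (en/m)^m$. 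Both are fine ways to get the result; the paper's is slightly cleaner because it sidesteps the tail sum.

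The slip: your claim that for $\eps \geq 1/2$ one can ``just verify $(3|A|/\eps)^{\eps} \geq |A|$ pointwise for $\eps\in[1/2,1]$ and $|A|\geq 2$'' is false --- at $\eps = 1/2$, $|A| = 100$ the left side is $\sqrt{600} \approx 24.5 < 100$. Fortunately the hedge was unnecessary: the tail bound $\sum_{k=0}^{m}\binom{n}{k}\leq (en/m)^m$ actually holds for \emph{all} $0<m\leq n$ (from $\sum_{k\leq m}\binom{n}{k}(m/n)^m \leq \sum_{k}\binom{n}{k}(m/n)^k = (1+m/n)^n \leq e^m$), so your main argument already covers $\eps\in(0,1]$ and the separate $\eps\geq 1/2$ case should simply be deleted. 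As written, though, the $\eps\geq 1/2$ paragraph contains an incorrect assertion and does not constitute a valid fallback.
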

\begin{proof}
  Without loss of generality, we may assume that $\eps|F|$ is an
  integer.  Let $f \colon F \to A$ be the projection of $x \in X$ to
  $F$. To choose $f$, we can first choose an $H \subset F$ of size
  $\eps|F|$ which contains the support of $f$.  Given such an $H$
  there are $A^{\eps|F|}$ functions $f$ supported on $H$. Since there
  are $\binom{|F|}{\eps |F|}$ different choices of $H$, it follows that
  there are at most $\binom{|F|}{\eps |F|}A^{\eps|F|}$ distinct
  functions.

  A standard bound on binomial coefficients  which follows from
  Stirling's approximation is
  \begin{align*}
    \binom{n}{k} \leq \left(\frac{n e}{k}\right)^k < \left(\frac{3n }{k}\right)^k.
  \end{align*}
  It follows that there are at most
  \begin{align*}
    \binom{|F|}{\eps |F|}A^{\eps|F|} \leq
    \left(\frac{3|F|}{\eps|F|}\right)^{\eps|F|}A^{\eps|F|} =
    \left(\frac{3|A|}{\eps}\right)^{\eps|F|}
  \end{align*}
  possible projections of $X$ on $F$.
\end{proof}

\begin{lemma}
  \label{lem:combo-low-ent}
  Let $X \in A^G$ be a shift, and let $0$ be a distinguished element
  of $A$. Suppose that for some $\delta>0$ and every finite,
  $(1,\delta)$-invariant $F \subset G$, it holds that for every $x \in
  X$ the projection $x_F$ vanishes (i.e., equals $0$) on all but at
  most $\eps|F|$ of the elements of $F$. Then $h(X) \leq \eps\log
  \frac{3|A|}{\eps}$.
\end{lemma}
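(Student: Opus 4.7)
The plan is to deduce this from Lemma~\ref{lem:combo-finite-set} by applying it along a F{\o}lner sequence that eventually becomes $(1,\delta)$-invariant. Since entropy is computed as an asymptotic growth rate along any F{\o}lner sequence (the limit is independent of the choice), it suffices to bound $|X_{F_n}|$ for sufficiently invariant $F_n$.

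First I would fix any F{\o}lner sequence $\{F_n\}$ in $G$, so that $\rho_1(F_n) \to 0$. For $n$ large enough, $\rho_1(F_n) \leq \delta$, i.e., $F_n$ is $(1,\delta)$-invariant. Then the hypothesis applies: for every $x \in X$, the projection $x_{F_n}$ vanishes outside a set of size at most $\eps|F_n|$. Applying Lemma~\ref{lem:combo-finite-set} with $F = F_n$ gives
\begin{align*}
  |X_{F_n}| \leq \left(\frac{3|A|}{\eps}\right)^{\eps|F_n|}.
\end{align*}

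Taking logarithms, dividing by $|F_n|$, and passing to the limit yields
\begin{align*}
  h(X) = \lim_{n \to \infty} \frac{1}{|F_n|} \log |X_{F_n}| \leq \eps \log \frac{3|A|}{\eps},
\end{align*}
which is the desired bound. There is no real obstacle here: the previous lemma does all the combinatorial work, and the only additional ingredient is the existence of a F{\o}lner sequence (guaranteed by amenability of $G$) together with the fact that the entropy limit is independent of the choice of F{\o}lner sequence, as noted after the definition of entropy in the paper.
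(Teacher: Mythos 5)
Your proposal is correct and is essentially the paper's own argument: apply Lemma~\ref{lem:combo-finite-set} to each sufficiently invariant finite set and pass to the entropy limit along a F{\o}lner sequence. You merely make explicit the F{\o}lner sequence step that the paper leaves implicit with ``Since this holds for all $(1,\delta)$-invariant $F$.''
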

\begin{proof}
  Let $F \subset G$ be finite and $(1,\delta)$-invariant. By
  Lemma~\ref{lem:combo-finite-set}
  \begin{align*}
    |X_F| \leq \left(\frac{3|A|}{\eps}\right)^{\eps|F|}.
  \end{align*}
  Since this holds for all $(1,\delta)$-invariant $F$, it follows that
  \begin{align*}
    h(X) \leq \eps\log \frac{3|A|}{\eps}.
  \end{align*}
\end{proof}

\section{Semi-continuity of the entropy function}
\label{sec:semi-cont-ent}
In this appendix we prove Proposition~\ref{prop:semi-cont-ent}.
\semicont*

Our proof uses ideas which are similar to those used by Lindenstrauss
and Weiss~\cite{lindenstrauss2000mean}*{Appendix 6}

\subsection{Bounding entropy using quasi-tilings}

Recall that given a shift $X$ and a finite subset $K\in G$, $X_K$ is
the projection of $X$ onto $K$. We first state an easy lemma about
the size of this set.
\begin{lemma}
  \label{lem:finite-prod-bounds}
  Let $X$ be a shift. If $K_1,\ldots,K_n$ are finite sets and
  $K=\union_{i\in(1,\ldots,n)} K_i$ then
  \begin{align*}
    |X_K| \leq \prod_{i=1}^{n}|X_{K_i}|
  \end{align*}
\end{lemma}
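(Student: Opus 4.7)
The plan is to exhibit an injection from $X_K$ into the Cartesian product $X_{K_1} \times \cdots \times X_{K_n}$, from which the claimed bound on cardinalities follows immediately. This is essentially a tautology about restrictions, so no real machinery is needed; the only thing to verify is that the map is both well-defined and injective.

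First I would define the map
\begin{align*}
  \phi \colon X_K \longrightarrow X_{K_1} \times \cdots \times X_{K_n},
  \qquad x_K \longmapsto (x_{K_1}, \ldots, x_{K_n}),
\end{align*}
where $x_{K_i}$ denotes the restriction to $K_i$ of any $x \in X$ whose restriction to $K$ equals $x_K$. Since $K_i \subseteq K$, the value $x_{K_i}$ depends only on $x_K$, so $\phi$ does not depend on the choice of representative $x$. Moreover, each coordinate $x_{K_i}$ is by definition an element of $X_{K_i}$, so $\phi$ is well-defined.

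Next I would verify injectivity. Suppose $\phi(x_K) = \phi(y_K)$, so that $x_{K_i} = y_{K_i}$ for every $i$. Then $x$ and $y$ agree pointwise on each $K_i$, hence on $K = \bigcup_i K_i$, which gives $x_K = y_K$. Therefore $\phi$ is injective, and
\begin{align*}
  |X_K| \leq \left|X_{K_1} \times \cdots \times X_{K_n}\right| = \prod_{i=1}^{n} |X_{K_i}|.
\end{align*}

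There is no real obstacle here; the statement is a set-theoretic observation that holds for any collection of functions on $K$, and uses nothing about $X$ being shift-invariant or closed. The lemma will presumably be combined later with the submultiplicativity of $|X_{K_i}|$ under translation and a Følner-type covering argument to deduce the upper semicontinuity of entropy, but for the lemma itself the above two short steps suffice.
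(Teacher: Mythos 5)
Your proof is correct and takes essentially the same approach as the paper: construct the restriction map into the Cartesian product $X_{K_1}\times\cdots\times X_{K_n}$ and observe it is injective because $K=\bigcup_i K_i$. The paper phrases this via an induction reducing to the case $n=2$, but that is a purely cosmetic difference from your direct treatment of all $n$ at once.
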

\begin{proof}
  By induction is suffices to prove the lemma when $n=2$ which we will
  do. Let $K=K_1 \cup K_2$. Then any element of $X_K$ can be projected
  to an element of $X_{K_1}$ and also into an element of $X_{K_2}$, by
  restriction.  We can thus map an element of $X_K$ to the Cartesian
  product of its mappings on $X_{K_1}$ and $X_{K_2}$ .  Since this
  mapping is injective, the claim follows.
\end{proof}
We will need two simple corollaries of this lemma before proving the
main proposition.
\begin{corollary}
  \label{cor:semi-cont1}
  Let $X \subseteq A^G$ be a shift, let $K$ be finite subset of $G$
  and $H$ a subset of $K$ with $p|K|$ elements, for some $0 \leq p
  \leq 1$. Then
  \begin{align*}
    |X_K|\leq|X_H|\cdot |A|^{(1-p)|K|}
  \end{align*}
\end{corollary}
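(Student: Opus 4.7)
The plan is to apply Lemma~\ref{lem:finite-prod-bounds} to a two-piece decomposition of $K$. Write $K = K_1 \cup K_2$ where $K_1 = H$ and $K_2 = K \setminus H$. Then Lemma~\ref{lem:finite-prod-bounds} gives $|X_K| \leq |X_H| \cdot |X_{K \setminus H}|$.

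Next, I would bound the factor $|X_{K \setminus H}|$ by the trivial estimate $|X_{K \setminus H}| \leq |A|^{|K \setminus H|}$, which holds simply because $X_{K \setminus H}$ is a subset of $A^{K \setminus H}$. Since $|H| = p|K|$, we have $|K \setminus H| = (1-p)|K|$, so this gives $|X_{K \setminus H}| \leq |A|^{(1-p)|K|}$. Combining the two bounds yields the claim.

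There is essentially no obstacle here; the statement is a direct consequence of the previous lemma together with the obvious cardinality bound $|X_L| \leq |A|^{|L|}$ for any finite $L \subset G$. The only thing to note is that the hypothesis does not require $p|K|$ to be an integer in any nontrivial sense — $|H|$ is by definition an integer, and the notation $p|K|$ is simply a parametrization defining $p = |H|/|K|$.
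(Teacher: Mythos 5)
Your proof is correct and is essentially identical to the paper's: both decompose $K$ as $H \cup (K\setminus H)$, apply Lemma~\ref{lem:finite-prod-bounds}, and bound $|X_{K\setminus H}|$ by the trivial estimate $|A|^{(1-p)|K|}$.
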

\begin{proof}
  There are $(1-p)|K|$ elements of $K \setminus H$. Since there are
  $|A|^{(1-p)|K|}$ colorings of $K \setminus H$ in the full shift,
  there are at most that many colorings of $K \setminus H$ in
  $X_K$. The corollary then follows by applying lemma
  \ref{lem:finite-prod-bounds} to the sets $K_1 = H$ and $K_2 = K
  \setminus H$.
\end{proof}

Our second corollary is an immediate consequence
Lemma~\ref{lem:finite-prod-bounds} to sets satisfying certain
conditions; these conditions will later be satisfied by
tile-translates.
\begin{corollary}
  \label{cor:semi-cont2}
  Let $X \subseteq A^G$ be a shift and suppose that for all elements
  of some collection $K_1,K_2,\ldots,K_n$ of disjoint, finite subsets
  of $G$, and for some constant $p > 0$, we have $|X_{K_i}| \leq
  p^{|K_i|}$ for $1 \leq i \leq n$. Let $K=\union_i K_i$. Then
  \begin{align*}
    |X_K| \leq p^{|K|}.
  \end{align*}
\end{corollary}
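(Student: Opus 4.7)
The plan is to combine the product bound of Lemma~\ref{lem:finite-prod-bounds} with the fact that the $K_i$ are disjoint, which makes their cardinalities add up exactly to $|K|$. Specifically, I would first apply Lemma~\ref{lem:finite-prod-bounds} to the collection $K_1,\ldots,K_n$ to obtain
\begin{align*}
  |X_K| \;\leq\; \prod_{i=1}^n |X_{K_i}|.
\end{align*}

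Then I would substitute the hypothesis $|X_{K_i}| \leq p^{|K_i|}$ into the right-hand side, yielding
\begin{align*}
  |X_K| \;\leq\; \prod_{i=1}^n p^{|K_i|} \;=\; p^{\sum_{i=1}^n |K_i|}.
\end{align*}
Finally, since the $K_i$ are pairwise disjoint and $K = \bigcup_i K_i$, we have $\sum_i |K_i| = |K|$, giving the desired bound $|X_K| \leq p^{|K|}$.

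There is no real obstacle here: the corollary is essentially a direct rephrasing of Lemma~\ref{lem:finite-prod-bounds} under the assumption of uniform exponential bounds on each $|X_{K_i}|$, and the only ingredient besides the lemma is additivity of cardinality for disjoint sets. Note that the disjointness hypothesis is used only to equate $\sum_i |K_i|$ with $|K|$; without disjointness, one would still get $|X_K| \leq p^{\sum_i |K_i|}$, but the exponent could exceed $|K|$.
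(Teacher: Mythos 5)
Your argument is exactly the intended one: the paper leaves this corollary unproved, describing it as an immediate consequence of Lemma~\ref{lem:finite-prod-bounds}, and your proof fills in precisely that argument, with disjointness used only to convert $\sum_i |K_i|$ into $|K|$. Correct and aligned with the paper.
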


We are now ready to prove that the entropy of a shift can be
controlled, given the size of its projections to tiles of a good tile
set. We will use this proposition in this section, as well as in
Section~\ref{sec:topology}.
\begin{proposition}
  \label{prop:tiling-entropy-bound}
  Let $X \subseteq A^G$ be a shift. Let $\cT$ be a
  $\half\eps$-good tile set. Suppose that $|X_{T_i}| \leq
  p^{|T_i|}$ for some constant $p > 0$ and every tile $T_i \in
  \cT$. Then
  \begin{align*}
    h(X) \leq \log p+\log(|A|)\eps.
  \end{align*}
\end{proposition}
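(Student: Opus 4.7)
The plan is to upper bound $|X_F|$ for a sufficiently invariant Følner set $F$ by decomposing $F$ into a portion covered by disjoint tile-translates of a fixed quasi-tiling (where the per-tile bound $|X_{T_i}| \le p^{|T_i|}$ applies) and a thin leftover portion (handled by the trivial bound of $|A|$ per point).

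First I would invoke the $\half\eps$-goodness of $\cT$ to produce a $\cT$-quasi-tiling shift $Q$ of error density at most $\half\eps$, and fix any $T \in Q$. For a finite $F \subseteq G$, let $U_F$ denote the union of those $T$-tile-translates $hT(h)$ that are \emph{entirely} contained in $F$; by disjointness of $T$, $U_F$ is a disjoint union of tile-translates. I would show that $|F \setminus U_F| \le \eps|F|$ whenever $F$ is sufficiently invariant, by splitting $F \setminus U_F$ into (i) $T$-errors in $F$, bounded by $\half\eps|F|$ via the error density assumption provided $F$ is $(1,\delta)$-invariant, and (ii) points of $F$ inside tile-translates that overhang $F$. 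Any point of type (ii) is at distance at most $2r(\cT)$ from a point of $G \setminus F$, since each tile-translate has diameter at most $2r(\cT)$; hence it lies in $\partial_{2r(\cT)} F$, whose size is $\le \half\eps|F|$ once $F$ is also $(2r(\cT),\half\eps)$-invariant.

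Next, since $|X_{hT_i}| = |X_{T_i}| \le p^{|T_i|}$ by shift-invariance of $X$, Corollary~\ref{cor:semi-cont2} applied to the disjoint tile-translates constituting $U_F$ yields $|X_{U_F}| \le p^{|U_F|}$. Applying Corollary~\ref{cor:semi-cont1} with $K = F$ and $H = U_F$ then gives
\begin{align*}
 |X_F| \;\le\; |X_{U_F}| \cdot |A|^{|F|-|U_F|} \;\le\; p^{|U_F|} \cdot |A|^{\eps|F|} \;\le\; p^{|F|} \cdot |A|^{\eps|F|},
\end{align*}
where the final inequality uses $p \ge 1$, which holds automatically when $X$ is nonempty, since $|X_{T_i}| \ge 1$. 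Taking logarithms, dividing by $|F|$, and passing to the limit along a Følner sequence whose sets $F_n$ are eventually $(1,\delta)$- and $(2r(\cT),\half\eps)$-invariant produces $h(X) \le \log p + \eps\log|A|$.

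The only real subtlety is choosing the invariance parameters so that both contributions to $F \setminus U_F$ are each at most $\half\eps|F|$; this is unproblematic because a Følner sequence is eventually $(r,\eta)$-invariant for any prescribed $r,\eta>0$. The remainder is a direct application of the two combinatorial lemmas from the appendix.
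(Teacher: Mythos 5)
Your proof is correct and follows essentially the same route as the paper's: pick a $\cT$-quasi-tiling with small error density, restrict to the tile-translates fully inside $F$, apply Corollary~\ref{cor:semi-cont2} to the disjoint tile-translates and Corollary~\ref{cor:semi-cont1} to absorb the thin leftover at a cost of $|A|^{\eps|F|}$, then take logs and pass to the F\o lner limit. The only differences are cosmetic: you spell out explicitly why the overhanging tile-translates contribute only $|\partial_{2r(\cT)}F|$ many points (the paper buries this inside a terse appeal to Theorem~\ref{thm:quasi-tiling-shifts-intro} and quotes the invariance radius as $r(\cT)$ rather than $2r(\cT)$, a harmless imprecision since one can always demand more invariance from the F\o lner sets), and you flag the $p\geq 1$ assumption used in the step $p^{|U_F|}\leq p^{|F|}$, which the paper also uses implicitly. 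Both are improvements in rigor, not changes in method.
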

\begin{proof}
  Let $F$ be any finite, $(r(\cT),\half\eps)$-invariant subset of $G$.
  It follows from Theorem~\ref{thm:quasi-tiling-shifts-intro} that
  there exists a $\cT$-quasi-tiling $T$ with the following properties:
  \begin{enumerate}
  \item Each tile-translate in $T$ is fully contained in $F$.
  \item \label{item:e-leq} $e(T,F) \leq \eps|F|$.
  \end{enumerate}
  By (\ref{item:e-leq}), if we denote by $E \subseteq F$ the set of
  elements covered the tile-translates in $T$, then $|E| \geq
  (1-\eps)|F|$. 

  Fix a tiling $T$ of $F$ with the properties described above.
  Since $X$ is shift-invariant,
  \begin{align*}
    |X_{h T(h)}| \leq p^{|h T(h)|},
  \end{align*}
  for all tile-translates $h T(h)$ in $T$. Since $E$ is the union
  of all the tile-translates in $T$, and since the sum of their
  sizes is $|E|$, by applying Corollary~\ref{cor:semi-cont2} we see
  that
  \begin{align*}
    |X_E| \leq \exp\big(\log p|E|\big) \leq \exp\big(\log
    p|F|\big).
  \end{align*}
  applying Corollary~\ref{cor:semi-cont1} to $F = E \cup (F \setminus
  E)$ yields
  \begin{align*}
    |X_F| &\leq
    \exp\big(\log p|F|\big) \cdot |A|^{\eps|F|}\\
    &=\exp\big(\log p|F|+\log(|A|)\eps|F|\big).
  \end{align*}
  Hence
  \begin{align*}
    \frac{1}{|F|}\log|X_F| \leq \log p+\log(|A|)\eps.
  \end{align*}
  Since this holds for all $F$ with a small enough boundary, it
  follows that
  \begin{align*}
    h(X) \leq \log p+\log(|A|)\eps.
  \end{align*}
\end{proof}

\subsection{Proof of semi-continuity}
To prove the semi-continuity of the entropy function it suffices to
show that for every sequence $X^1,X^2,\ldots$ of shifts with limit
$\lim_n X^n=X$ it holds that $\limsup_n h(X^n) \leq h(X)$.

We will prove Proposition~\ref{prop:semi-cont-ent} by showing that for
every $\eps >0$ there is an  $N$ large enough so that
for all $n \geq N$  it holds that
\begin{align*}
  h(X^n)  \leq h(X)+\eps+\log(|A|)\eps.
\end{align*}

Fix $\eps > 0$. Let $\cT = (T_1,\ldots,T_n)$ be a $\delta$-good tile
set, with each $T_i$ being $(1,\delta)$-invariant, for some $\delta <
\half \eps$ small enough so that
\begin{align}
  \label{eq:large-enough-tiles}
  \frac{1}{|T_i|}\log|X_{T_i}| \leq h(X)+\eps
\end{align}
for all $T_i \in \cT$.

Let $N$ be large enough so that, for all $n \geq N$,
$X^n_{T_i}=X_{T_i}$ for all $T_i \in \cT$. Hence
\begin{align*}
  \frac{1}{|T_i|}\log|X^n_{T_i}| = \frac{1}{|T_i|}\log|X_{T_i}|,
\end{align*}
and by~\eqref{eq:large-enough-tiles}
\begin{align*}
  \frac{1}{|T_i|}\log|X^n_{T_i}| \leq h(X) + \eps.
\end{align*}
Rearranging, we get that for every tile $T_i$
\begin{align*}
  |X^n_{T_i}| \leq e^{(h(X)+\eps)|T_i|},
\end{align*}
and by shift-invariance, the same holds for every tile-translate $h T_i$:
\begin{align*}
  |X^n_{h T_i}| \leq e^{(h(X)+\eps)|h T_i|},
\end{align*}

We can now apply Proposition~\ref{prop:tiling-entropy-bound} to $X^n$
and $\cT$, by setting $p = e^{h(X)+\eps}$. This yields
\begin{align*}
  h(X^n) \leq h(X)+\eps+\log(|A|)\eps,
\end{align*}
proving Proposition~\ref{prop:semi-cont-ent}.

\bibliography{entropy}
\end{document}